\documentclass[11pt]{article}

\usepackage{fullpage,amsmath, amssymb,color}
\usepackage{algorithm}
\usepackage{algorithmic}
\usepackage{graphicx}
\usepackage{caption}
\usepackage{subfigure}
\usepackage{array}
\newcommand{\BlackBox}{\rule{1.5ex}{1.5ex}}  
\newenvironment{proof}{\par\noindent{\bf Proof\ }}{\hfill\BlackBox\\[2mm]}

\makeatletter
\newcommand*{\rom}[1]{\expandafter\@slowromancap\romannumeral #1@}
\makeatother
\usepackage{pifont}
\newcommand{\cmark}{\ding{51}}%
\newcommand{\xmark}{\ding{55}}%

\newcommand{\ve}[2]{\langle #1 ,  #2 \rangle}   
\newcommand{\eqdef}{\stackrel{\text{def}}{=}}
\newcommand{\R}{\mathbb{R}}

\newcommand{\Exp}{\mathbb{E}}
\newcommand{\Prob}{\mathbb{P}}
\newcommand{\calP}{\mathcal{P}}

\def\<#1,#2>{\langle #1,#2\rangle}

\DeclareMathOperator{\dom}{dom}

\newtheorem{theorem}{Theorem}
\newtheorem{lemma}[theorem]{Lemma}
\newtheorem{corollary}[theorem]{Corollary}
\newtheorem{proposition}[theorem]{Proposition}
\newtheorem{assumption}[theorem]{Assumption}

\title{Randomized Dual Coordinate Ascent with Arbitrary Sampling
}


\author{Zheng Qu\footnote{School of Mathematics, The University of Edinburgh, United Kingdom.} \qquad  Peter Richt\'{a}rik\footnote{School of Mathematics, The University of Edinburgh, United Kingdom.} \qquad  Tong Zhang\footnote{Department of Statistics, Rutgers University, New Jersey, USA and Big Data Lab, Baidu Inc,  China.  \qquad \qquad {\bf Acknowledgments:} The first two authors would like to  acknowledge support from the EPSRC  Grant EP/K02325X/1,
{\em Accelerated Coordinate Descent Methods for Big Data Optimization}.}}

\begin{document}
\maketitle

\begin{abstract}
We study the problem of minimizing the average of a large number of smooth convex functions penalized  with a strongly convex regularizer. We propose and analyze a novel primal-dual method (Quartz) which at every iteration samples and updates a random subset of the dual variables, chosen according to an {\em arbitrary distribution}.  In contrast to typical analysis, we directly bound the decrease of the primal-dual error (in expectation), without the need to first analyze the dual error.  Depending on the choice of the sampling, we obtain efficient serial, parallel and distributed variants of the method. In the serial case, our bounds match the best known bounds for SDCA (both with  uniform and importance sampling). With standard mini-batching, our bounds predict initial data-independent speedup as well {as \em additional data-driven speedup} which depends on spectral and sparsity properties of the data. We calculate theoretical speedup factors  and find that they are excellent predictors of actual speedup in practice. Moreover, we illustrate that it is possible to design an efficient {\em mini-batch importance} sampling. The distributed variant of Quartz is the first distributed SDCA-like method with an analysis for non-separable data.

\end{abstract}

\section{Introduction}

In this paper we consider a primal-dual pair of structured convex optimization problems which has in several variants of varying degrees of generality attracted a lot of attention in the past few years in the machine learning and optimization communities \cite{Hsieh:2008:DCD:1390156.1390208,Lin:2008:DCDM, SDCA, ProxSDCA, pegasos2, ASDCA, IProx-SDCA}.








\subsection{The problem}

Let $A_1,\dots,A_n$ be a collection of $d$-by-$m$ real matrices and $\phi_1, \dots, \phi_n$ be $1/\gamma$-smooth convex  functions from $\R^m$ to $\R$, where $\gamma>0$.
Further, let $g:\R^d\rightarrow \R$ be a $1$-strongly convex function and $\lambda>0$ a regularization parameter. We are interested in solving the following {\em primal} problem:
\begin{equation}\label{eq:primal}\min_{w=(w_1,\dots,w_d)\in \R^d}\;\; \left[ P(w) \eqdef \frac{1}{n}\sum_{i=1}^n \phi_i(A_i^\top w) + \lambda g(w)\right].\end{equation}

In the machine learning context, matrices $\{A_i\}$ are interpreted as examples/samples,  $w$ is a (linear) predictor, function $\phi_i$ is the loss incurred by the predictor on example $A_i$, $g$ is a regularizer, $\lambda$ is a regularization parameter and \eqref{eq:primal}  is the {\em regularized empirical risk minimization} problem. However, above problem has many other applications outside machine learning. In this paper we are especially interested in problems where $n$ is very big (millions, billions), and much larger than $d$. This is often the case in {\em  big data} applications.

Let $g^*:\R^d \to \R$ be the convex conjugate\footnote{In this paper, the convex (Fenchel) conjugate of a function $\xi:\R^k\to \R$ is the function $\xi^*:\R^k\to \R$ defined by $\xi^*(u) = \sup_{\|s\|=1} \{s^\top u - \xi(s)\}$, where $\|\cdot\|$ is the L2 norm.} of $g$ and for each $i$, let $\phi_i^*:\R^m\to \R$ be the convex conjugate of $\phi_i$. Associated with the {\em primal problem} \eqref{eq:primal} is the Fenchel {\em dual problem}:
\begin{equation}\label{eq:dual}\max_{\alpha=(\alpha_1,\dots,\alpha_n) \in \R^{N}=\R^{nm}}\;\; \left[D(\alpha)
\eqdef - f(\alpha)   -\psi(\alpha)\right],\end{equation}
where $\alpha = (\alpha_1,\dots,\alpha_n)\in \R^N=\R^{nm}$ is obtained by stacking dual variables (blocks) $\alpha_i\in \R^m$, $i=1,\dots,n$, on top of each other and  functions $f$ and $\psi$ are defined by
\begin{align}\label{a-defoff}
& f(\alpha)\eqdef \lambda g^*\left(\frac{1}{\lambda n}\sum_{i=1}^n A_i\alpha_i\right),\\\label{a-defofpsi}
&\psi(\alpha)\eqdef \frac{1}{n}\sum_{i=1}^n \phi_i^*(-\alpha_i).
\end{align}
Note that $f$ is convex and smooth and $\psi$ is strongly convex and block separable.

\subsection{Contributions} We now briefly list the main contributions of this work.

\paragraph{Quartz.} We propose a new algorithm, which we call Quartz\footnote{Strange as it may seem, this algorithm name appeared to one of the authors of this paper in a dream. According to Wikipedia: ``Quartz is the second most abundant mineral in the Earth's continental crust. There are many different varieties of quartz, several of which are semi-precious gemstones.''  Our method also comes in many variants. It later came as a surprise to the  authors that the name could be interpreted as QU And Richt{\' a}rik and Tong Zhang. Whether the subconscious mind of the sleeping coauthor who dreamed up the name knew about this connection or not is not known.}, for simultaneously solving the primal~\eqref{eq:primal} and  dual~\eqref{eq:dual} problems. On the dual side, at  each iteration our method selects and updates a {\em random subset (sampling)}  $\hat{S}\subseteq \{1,\dots,n\}$ of the dual variables/blocks. We assume that these sets are i.i.d. throughout the iterations. However,  {\em we do not impose any additional assumptions on the distribution} apart from the necessary requirement that each block $i \in [n]$ needs to be chosen with a positive probability: $p_i\eqdef\Prob(i\in \hat{S})>0$. Quartz is the first SDCA-like method analyzed for an {\em arbitrary sampling}.  The dual updates are then used to perform an update to the primal variable $w$ and the process is repeated. Our primal updates are different (less aggressive) from those used in SDCA \cite{SDCA} and Prox-SDCA \cite{ProxSDCA}.

\paragraph{Main result.} We prove that starting from an initial pair $(w^0,\alpha^0)$, Quartz finds a pair $(w,\alpha)$ for which $P(w)-D(\alpha)\leq \epsilon$ (in expectation) in at most
\begin{equation}\label{eq:complexity-intro} 
\max_i \left( \frac{1}{p_i} + \frac{v_i}{p_i \lambda \gamma n}\right)  \log \left( \frac{P(w^0)-D(\alpha^0)}{\epsilon} \right)\end{equation}
iterations. The parameters $v_1,\dots,v_n$ are assumed to satisfy the following ESO (expected separable overapproximation) inequality:
\begin{equation} \label{eq:ESOfirst}  \textstyle
\Exp_{\hat{S}} \left[ \left\| \sum_{i\in \hat{S} } A_i h_i \right\|^2 \right] \leq   \sum_{i=1}^n p_i v_i \|h_i\|^2.
\end{equation} 
Moreover, the parameters are needed to run the method (they determine stepsizes), and hence it is critical that they can be cheaply computed before the method starts. As we will show, for many samplings of interest this can be done in time required to read the data $\{A_i\}$. We wish to point out that \eqref{eq:ESOfirst}  always holds for {\em some} parameters $\{v_i\}$. Indeed,   the left hand side is a quadratic function of $h$ and hence the inequality holds for large-enough $v_i$. Having said that, the size of these parameters directly influences the complexity, and hence one would want to obtain as tight bounds as possible.

\paragraph{Arbitrary sampling.} As described above, Quartz uses an {\em arbitrary sampling} for picking the dual variables to be updated in each iteration. To the best of our knowledge, only a single paper exists in the literature where a stochastic  method using an arbitrary sampling was analyzed: the NSync method of Richt\'{a}rik and Tak\'{a}\v{c} \cite{NSync} (for unconstrained minimization of a strongly convex function). Assumption \eqref{eq:ESOfirst} was for  the first time introduced there  (in a more general form; we are using it here  in the special case of a quadratic function). However, NSync is not a primal-dual method. Besides NSync, the closest works to ours in terms of the generality of the sampling are the PCDM algorithm of Richt\'{a}rik and Tak\'{a}\v{c} \cite{PCDM}, SPCDM method of Fercoq and Richt\'{a}rik \cite{SPCDM} and the APPROX method of Fercoq and Richt\'{a}rik \cite{APPROX}. All these are randomized coordinate descent methods, and all  were analyzed for arbitrary {\em uniform} samplings (i.e., samplings satisfying $\Prob(i\in \hat{S})=\Prob(i'\in \hat{S})$ for all $i,i'\in[n]$). Again, none of these methods were analyzed in a primal-dual framework.

\paragraph{Direct primal-dual analysis.} Virtually all methods for solving \eqref{eq:primal} by performing stochastic steps in the dual \eqref{eq:dual}, such as SDCA \cite{SDCA}, SDCA for SVM dual \cite{pegasos2}, ProxSDCA \cite{ProxSDCA}, ASDCA \cite{ASDCA} and APCG \cite{APCG}, are analyzed by first establishing dual convergence and then proving that the  duality gap is bounded by the dual  residual. The SPDC method of Zhang and Xiao \cite{SPDC}, which is a stochastic coordinate update variant of the Chambolle-Pock method \cite{ChambolePock}, is an exception.  Our analysis is novel, and {\em directly primal-dual} in nature. As a result, our proof is more direct, and the logarithmic term in our bound has a simpler form.

\paragraph{Flexibility: many important variants.}  Our method is very flexible: by specializing it to specific samplings, we obtain numerous variants, some similar (but not identical) to existing methods in the literature, and some very new and of significance to big data optimization. 

\begin{itemize}
\item \textbf{Serial uniform sampling.} If $\hat{S}$ always picks a single block, uniformly at random ($p_i=1/n$), then the dual updates of Quartz are similar to  those of SDCA \cite{SDCA} and Prox-SDCA \cite{ProxSDCA}.  The leading term in the complexity bound \eqref{eq:complexity-intro}  becomes $n+ \max_i \lambda_{\text{max}}(A_i^\top A_i)/( \lambda \gamma)$, which matches the bounds obtained  in these papers. However, our logarithmic term is simpler.

\item \textbf{Serial optimal sampling (importance sampling).} If $\hat{S}$ always picks a single block, with $p_i$ chosen so as to minimize the complexity bound \eqref{eq:complexity-intro},  we obtain the same {\em  importance sampling} as that recently used in the IProx-SDCA method \cite{IProx-SDCA}. Our bound becomes $n+ (\tfrac{1}{n}\sum_i \lambda_{\text{max}}(A_i^\top A_i))/( \lambda \gamma)$, which matches the bound in \cite{IProx-SDCA}. Again, our logarithmic term is better. 

\item \textbf{$\tau$-nice sampling.} If we now let $\hat{S}$ be a random subset of $[n]$ of size $\tau$ chosen uniformly at random (this sampling is called $\tau$-nice in \cite{PCDM}), we obtain a mini-batch (parallel) variant of Quartz. There are only a handful of primal-dual stochastic methods which use mini-batching. The first such method was a mini-batch version of SDCA specialized to training $L2$-regularized linear SVMs with hinge loss \cite{pegasos2}.  Besides this,  two accelerated mini-batch methods have been recently proposed: ASDCA of Shalev-Shwartz and Zhang \cite{ASDCA} and SPDC of Zhang and Xiao \cite{SPDC}. The  complexity bound of Quartz specialized to the $\tau$-nice sampling is different, and despite Quartz not being an accelerated method, and can be better in certain regimes (we will do a detailed comparison in Section~\ref{sec:main_result}).

\item \textbf{Distributed sampling.} To the best of our knowledge, no other samplings than those described above were used in  stochastic primal-dual methods. However, there are many additional interesting samplings proposed  for randomized coordinate descent, but never applied to the primal-dual framework.   For instance, we can use the {\em distributed sampling} which led to the development of the Hydra algorithm \cite{Hydra} (distributed coordinate descent) and its accelerated variant  Hydra$^2$ (Hydra squared) \cite{Hydra2}. Using this sampling, Quartz can be efficiently implemented in a distributed environment (partition the examples across the nodes of a cluster, and let each node in each iteration update a random subset of variables corresponding to the examples it owns). 

\item \textbf{Product sampling.} We  describe a novel sampling, which we call {\em product sampling}, that can be {\em  both  non-serial and non-uniform}. This is the first time such a sampling has been described and and a SDCA-like method using it analyzed. For suitable data (if  the examples can be partitioned into several groups no two of which share a feature), this sampling can lead to linear or nearly linear speedup when compared to the serial uniform sampling.

\item \textbf{Other samplings.}  While we develop the analysis of Quartz for an arbitrary sampling, we do not compute the ESO parameters $\{v_i\}$ for any other samplings in this paper. However, there  are several other interesting choices. We refer the reader to \cite{PCDM} and \cite{NSync} for further examples of uniform and non-uniform samplings, respectively. All that must be done for any new $\hat{S}$ is to find parameters $v_i$ for which \eqref{eq:ESOfirst} holds, and the complexity of the new variant of Quartz is given by \eqref{eq:complexity-intro}.
\end{itemize}

\paragraph{Further data-driven speedup.} Existing mini-batch stochastic primal-dual methods achieve linear speedup up to a certain mini-batch size which depends on $n, \lambda $ and $\gamma$. Quartz obtains this data-independent speedup, but also obtains {\em further data-driven speedup}. This is caused by the fact that Quartz uses more aggressive dual stepsizes, informed by the data through the ESO parameters $\{v_i\}$. The smaller these constants, the better speedup. For instance, we will show that higher data  sparsity leads to smaller $\{v_i\}$ and hence to better speedup. To illustrate this, consider the $\tau$-nice sampling (hence, $p_i=\tau/n$ for all $i$) and  the extreme case of perfectly sparse data (each feature $j\in [d]$  appearing in a single example $A_i$). Then \eqref{eq:ESOfirst} holds with $v_i=\lambda_{\text{max}}(A_i^\top A_i)$ for all $i$, and hence the leading term in \eqref{eq:complexity-intro} becomes $n/\tau + \max_i \lambda_{\text{max}}(A_i^\top A_i)/(\gamma \lambda \tau)$,  predicting {\em perfect speedup} in the mini-batch size $\tau$. We derive {\em theoretical speedup factors} and show that these are excellent predictors of actual behavior of the method in an implementation. This was previously observed for the PCDM method \cite{PCDM} (which is not primal-dual). 

\paragraph{Quartz vs purely primal and purely dual methods.} 
In the special case when  $\hat{S}$ is the serial uniform sampling, the complexity of Quartz is similar to the bounds recently obtained by several purely primal stochastic and semi-stochastic gradient methods (all having reduced variance of the gradient estimate) such as SAG \cite{SAG}, SVRG \cite{SVRG}, S2GD \cite{S2GD},  SAGA \cite{SAGA}, mS2GD \cite{mS2GD} and MISO \cite{MISO}.
In the case of serial optimal sampling, relevant purely primal methods with similar guarantees are ProxSVRG \cite{proxSVRG} and S2CD  \cite{S2CD}. A mini-batch primal method, mS2GD, was analyzed in \cite{mS2GD}, achieving a similar bound to Quartz specialized to the $\tau$-nice sampling. Purely dual (stochastic coordinate descent) methods with similar bounds to Quartz for both the serial uniform and serial optimal sampling, for problems of varying similarity and generality when compared  to \eqref{eq:dual}, include   SCD \cite{ShalevTewari09}, RCDM \cite{Nesterov:2010RCDM}, UCDC/RCDC \cite{UCDC}, ICD   \cite{ICD} and RCD \cite{Necoara:rcdm-coupled}. These methods were then generalized to the $\tau$-nice sampling in SHOTGUN \cite{shotgun}, further generalized to arbitrary uniform samplings in PCDM \cite{PCDM}, SPCDM \cite{SPCDM}, APPROX \cite{APPROX} (which is an accelerated method) and to arbitrary (even nonuniform) samplings in NSync \cite{NSync}. Another accelerated method, BOOM, was proposed in \cite{mukherjee2013parallel}. Distributed randomized coordinate descent methods with purely dual analysis include Hydra \cite{Hydra} and Hydra$^2$ \cite{Hydra2} (accelerated variant of Hydra). Quartz specialized to the distributed sampling achieves the same rate as Hydra, but for both the primal and dual problems simultaneously. 
 

\paragraph{General problem.} We consider the  problem \eqref{eq:primal} (and consequently, the associated dual) in a rather general form; most existing primal-dual methods focus on the case when $g$ is a quadratic (e.g., \cite{SDCA, ASDCA}) or $m=1$  (e.g., \cite{SPDC}). Lower bounds for a variant of problem \eqref{eq:primal} were recently established by Agarwal and Bottou \cite{AgarwalBottou14}.


\subsection{Outline}

In Section~\ref{sec:alg} we describe the algorithm and show that it admits  a natural interpretation in terms of Fenchel duality. We also outline the similarities and differences of the primal and dual update steps with SDCA-like methods. In Section~\ref{sec:ESO} we show how parameters $\{v_i\}$ satisfying the ESO inequality \eqref{eq:ESOfirst} can be computed for several selected samplings. We then proceed to Section~\ref{sec:main_result} where we state the main result, specialize it to some of the samplings discussed in Section~\ref{sec:ESO}. Sections~\ref{sec:MAIN-tau-nice} and \ref{MAIN:distributed} deal with Quartz specialized to the $\tau$-nice and distributed sampling, respectively. We also give detailed comparison of our results with existing results for related primal-dual stochastic methods existing in the literature, and analyze theoretical speedup factors. We then   provide the proof of the main complexity result in Section~\ref{sec:proofs}. In Section~\ref{sec:experiments} we perform numerical experiments on the problem of training $L_2$-regularized linear support vector machine with square and smoothed hinge loss with real datasets. Finally, in Section~\ref{sec:conclusion} we conclude.

\section{The Quartz Algorithm} \label{sec:alg}

In this section we describe our method (Algorithm~\ref{algo:primaldual}).

\subsection{Preliminaries}

The most important parameter of  Quartz is a  random sampling $\hat{S}$ of the dual variables $[n]=\{1,2,\dots,n\}$. That is, $\hat{S}$  is a random subset of $[n]$, or more precisely, a random set-valued mapping with values being the subsets of $[n]$.  In order to guarantee that each block (dual variable) has a chance to get updated by the method, we necessarily need to make the following assumption.

\begin{assumption}[Proper sampling] \label{ass-proper} $\hat{S}$ is a proper sampling. That is, \begin{equation}
\label{eq:p_i}p_i\eqdef \Prob(i\in \hat{S})>0, \qquad i\in [n].\end{equation}
\end{assumption}

However, we shall not make any other assumption on $\hat{S}$. Prior to running the algorithm, we compute positive constants $v_1,\dots,v_n$ satisfying \eqref{eq:ESOfirst}---such constants always exist---as these are used to define the stepsize parameter $\theta$ used throughout:
\begin{align}
\theta=\min_i \frac{p_i \lambda \gamma n}{v_i+\lambda \gamma n}.
\end{align}

We shall show how this parameter can be computed for various samplings in Section~\ref{sec:ESO}. Let us now formalize the notions of $(1/\gamma)$-smoothness and strong convexity.

\begin{assumption}[Loss] \label{ass:loss} For each $i\in [n]$, the loss function $\phi_i:\R^m\to \R$ is convex, differentiable and has  $(1/\gamma)$-Lipschitz continuous gradient with respect to the L2 norm, where $\gamma$ is a positive constant:
\[ \|\nabla \phi_i(x)-\nabla \phi_i(y)\| \leq \frac{1}{\gamma} \|x-y\|,  \quad x,y\in \R^m.\]
For brevity, the last property is often called $(1/\gamma)$-smoothness.
\end{assumption}

It follows that $\phi_i^*$ is  $\gamma$-strongly convex.

\begin{assumption}[Regularizer] \label{ass:reg} The regularizer $g:\R^d\to \R$ is $1$-strongly convex. That is, \[g(w)\geq g(w') +\ve{\nabla g(w')}{w-w'} + \tfrac{1}{2}\|w-w'\|^2, \quad  w,w'\in \R^d,\]
where $\nabla g(w')$ is a subgradient of $g$ at $w'$.
\end{assumption}

It follows that $g^*$ is $1$-smooth.

\subsection{Description of the method}

Quartz starts with an initial  pair of primal and dual vectors $(w^0,\alpha^0)$. Given  $w^{t-1}$ and $\alpha^{t-1}$, the method maintains the vector
\begin{equation}\label{jibs79gss}\bar{\alpha}^{t-1} = \frac{1}{\lambda n} \sum_{i=1}^n A_i \alpha_i^{t-1}.\end{equation}
Initially this is computed from scratch, and subsequently  it is maintained in an efficient manner at the end of each iteration.

\begin{algorithm}
\begin{algorithmic}[ht]
\STATE \textbf{Parameters}: proper random sampling $\hat S$ and a positive vector $v\in \R^n$ 
\STATE \textbf{Initialization}: Choose $\alpha^0 \in \R^{N}$ and $w^0\in \R^d$
\\~\hspace{2.5cm} Set  $p_i=\Prob(i\in \hat S)$,
$ \theta=\displaystyle\min_i \tfrac{p_i \lambda \gamma n}{ v_i+\lambda \gamma n}$ and
 $\bar \alpha^0=\tfrac{1}{\lambda n}\sum_{i=1}^n A_i \alpha^0_i $
\FOR{ $t \geq 1$}
\STATE $w^t=(1-\theta)w^{t-1}+\theta \nabla g^*(\bar \alpha^{t-1})$
\STATE $\alpha^{t}=\alpha^{t-1}$
\STATE Generate a random set $S_t\subseteq [n]$, following the distribution of $\hat S$
\FOR{$i\in S_t$}
\STATE Calculate $\Delta\alpha_i^t$ using one of the following options:
\STATE $\quad$\textbf{Option} \rom{1} :

$ \quad\Delta\alpha^{t}_{i}=\arg\max_{\Delta\in \R^m}
\left[-\phi_i^*(-(\alpha_i^{t-1}+\Delta ))-\nabla g^*(\bar\alpha^{t-1})^\top A_i \Delta-\frac{v_i\|\Delta\| ^2}{2\lambda n}\right]
 $
\STATE $\quad$\textbf{Option} \rom{2} :

$\quad \Delta\alpha^{t}_{i}=-\theta p_i^{-1}\alpha_{i}^{t-1}-\theta p_i^{-1}\nabla \phi_{i}(A_{i}^\top w^t)$

 \STATE $\alpha_i^t=\alpha_i^{t-1}+\Delta\alpha_i^t$
\ENDFOR
\STATE $\bar \alpha^{t}=\bar \alpha^{t-1}+{(\lambda n)^{-1}}\sum_{i\in S_t} A_{i}\Delta\alpha_i^{t}$
\ENDFOR 
\STATE \textbf{Output:} $w^t$, $\alpha^t$
\end{algorithmic}
\caption{Quartz}
\label{algo:primaldual}
\end{algorithm}

Let us now describe how the vectors $w^t$ and $\alpha^{t}$ are computed. Quartz first updates the primal vector $w^t$ by setting it to a {\em convex combination} of the previous value $w^{t-1}$ and $\nabla g^*(\bar{\alpha}^{t-1})$:
\begin{equation}\label{eq:98h9s8h00}w^t  =(1-\theta)w^{t-1}+\theta \nabla g^*(\bar \alpha^{t-1}).\end{equation}

We then proceed to select, and subsequently update, a random subset $S_t\subseteq [n]$ of the dual variables,  independently from the sets drawn in previous iterations, and following the distribution of $\hat{S}$. Clearly, there are many ways in which the distribution of $\hat{S}$ can be chosen, leading the numerous variants of Quartz. We shall describe some of them in Section~\ref{sec:ESO}. We allow two options for the actual computation of the dual updates. Once the dual variables are updated, the vector $\bar{\alpha}^t$ is updated in an efficient manner so that  \eqref{jibs79gss} holds. The entire process is repeated.

\paragraph{Fenchel duality interpretation.} Quartz has a natural interpretation in terms of Fenchel duality. Fix a primal-dual pair of vectors  $(w,\alpha)\in \R^d\times \R^N$ and define 
$\bar{\alpha} = \frac{1}{\lambda n}\sum_{i=1}^n A_i\alpha_i$. The duality gap for the pair $(w,\alpha)$ can be decomposed as follows:
\begin{eqnarray*}
P(w) - D(\alpha) 
&\overset{\eqref{eq:primal}+\eqref{eq:dual}}{=}&   \lambda \left( g(w)  + g^*\left(\bar{\alpha}\right)\right) + \frac{1}{n}\sum_{i=1}^n \phi_i(A_i^\top w) +   \phi_i^*(-\alpha_i) \\
&=& \lambda (\underbrace{g(w)  + g^*\left(\bar{\alpha}\right)- \left\langle w, \bar{\alpha} \right\rangle  }_{GAP_{g}(w,\alpha)}) + \frac{1}{n}\sum_{i=1}^n \underbrace{\phi_i(A_i^\top w) +   \phi_i^*(-\alpha_i)  + \ve{A_i^\top w}{\alpha_i}}_{GAP_{\phi_i}(w,\alpha_i)}.
 \end{eqnarray*}
By Fenchel-Young inequality, $GAP_g(w,\alpha)\geq 0$ and $GAP_{\phi_i}(w,\alpha_i)\geq 0$ for all $i$, which proves weak duality for the problems \eqref{eq:primal} and \eqref{eq:dual}, i.e., $P(w)\geq D(\alpha)$. The pair $(w,\alpha)$ is optimal when both $GAP_{g}$ and $GAP_{\phi_i}$ for all $i$ are zero. It is known that this happens precisely when the following optimality conditions hold:
\begin{equation}\label{eq:98y98hss}w = \nabla g^*(\bar{\alpha}),\end{equation}
\begin{equation}\label{eq:s899sns09u} \alpha_i = -\nabla \phi_i(A_i^\top w), \quad \forall i\in [n].\end{equation}

We will now interpret the primal and dual steps of Quartz in terms of the above discussion. At iteration $t$ we first set the primal variable $w^t$ to a convex combination of its current value $w^{t-1}$ and  a value that would set $GAP_g$ to zero: see \eqref{eq:98h9s8h00}. Hence, our primal update is not as aggressive as that of Prox-SDCA. This is followed by adjusting the dual variables corresponding to a randomly chosen set of examples $S_t$. 
Under Option II, for each example $i\in S_t$, the $i$-th dual variable $\alpha_i^t$ 
is set to a convex combination of its current value 
$\alpha_i^{t-1}$ and the value that would set $GAP_{\phi_i}$ to zero: \[\alpha_i^t = \left(1-\frac{\theta}{p_i}\right) \alpha_i^{t-1} + \frac{\theta}{p_i}\left(-\nabla \phi_i(A_i^\top w^t)\right).\]

\paragraph{Quartz vs Prox-SDCA.} In the special case when $\hat{S}$ is the serial uniform sampling (i.e., $p_i=1/n$ for all $i\in[n]$), Quartz can be compared to  
 Proximal Stochastic Dual Coordinate Ascent (Prox-SDCA)~\cite{ASDCA,SDCA}.  Indeed,  if  Option \rom{1} is always used in Quartz,  then the dual update of $\alpha^t$ in Quartz is exactly the same as the dual update of Prox-SDCA (using Option \rom{1}).
 In this case, the difference between our method and Prox-SDCA lies in the update of the primal variable $w^t$: while Quartz performs the update \eqref{eq:98h9s8h00}, Prox-SDCA  (see also~\cite{NIPSdistributedSDCA,cocoa}) performs the more aggressive  update $w^t=\nabla g^*(\bar \alpha^{t-1})$.

\section{Expected Separable Overapproximation} \label{sec:ESO}

For the sake of brevity, it will be convenient to establish some notation.  Let $A=[A_1,\dots, A_n]\in \R^{d\times N}=
\R^{d\times nm}$ be the block matrix with blocks  $A_i \in \R^{d\times m}$. Further, let $A_{ji}$ be the $j$-th row of $A_i$. Likewise, for $h\in \R^N$ we will write $h=(h_1,\dots,h_n)$, where $h_i\in \R^m$, so that  $Ah = \sum_{i=1}^n A_i h_i$.
For a vector of positive weights $w\in \R^n$, we define a weighted Euclidean norm in $\R^N$ by
\begin{equation}\label{eq:i9s89s8hsnorm}
\|h\|_w^2 \eqdef \sum_{i=1}^n w_i \|h_i\|^2,
\end{equation}
where $\|\cdot\|$ is the standard Euclidean norm on $\R^m$. For $S\subset[n]\eqdef \{1,\dots,n\}$ and  $h\in\R^{N}$ we use the notation $h_{[S]}$ to denote 
the vector in $\R^N$ coinciding with $h$ for blocks $i\in S$ and zero elsewhere: 
$$
(h_{[S]})_i=\left\{\begin{array}{ll}
           h_i,& ~~\mathrm{if~} i\in S,\\
0,&~~\mathrm{otherwise.}
          \end{array}\right.
$$
With this notation, we have
\begin{equation} \label{eq:is9hssdsd} Ah_{[S]} = \sum_{i\in S} A_i h_i.\end{equation}


As mentioned before, in our analysis we require that the random sampling $\hat S$ and 
the positive vector $v\in\R^ n$ used in Quartz satisfy inequality  \eqref{eq:ESOfirst}. We shall now formalize this as an assumption, using the compact notation established above.

\begin{assumption}[ESO]\label{ass-ESO}  The following inequality holds for all $h\in\R^{N}$:
\begin{align}\label{a-ESO}\Exp[\| A h_{[\hat S]}\|^2] \leq   \|h\|_{p\cdot v}^2,\end{align}
where $p=(p_1,\dots,p_n)$ is defined in \eqref{eq:p_i},  $v=(v_1,\dots,v_n)>0$ and $p\cdot v = (p_1v_1,\dots,p_n v_n)\in \R^n$.
\end{assumption}

Note that for any proper  sampling $\hat S$, there must {\em exist} vector $v>0$ satisfying Assumption~\ref{ass-ESO}. Hence, this  is an assumption that such a vector $v$ is {\em readily available}. Indeed,  the term on the left is a finite average of convex quadratic functions of $h$, and hence is a convex quadratic. Moreover, we can write
$$
\Exp [\|Ah_{[\hat{S}]}\|^2] = \Exp[h_{[\hat S]}^\top A^\top A h_{[\hat S]}]=h^\top \left(P \circ A^\top A\right)  h,
$$
where $\circ$ denotes the Hadamard  (component-wise) product of matrices and $P\in \R^{N \times N}$ is an $n$-by-$n$ block matrix with block $(i,j)$  equal to $\Prob(i\in\hat S,j\in\hat S)1_m$, with  $1_m$ being the $m$-by-$m$ matrix of all ones. Hence~\eqref{a-ESO} merely means to upper bound
the matrix $P\circ A^\top A$ by an $n$-by-$n$ block diagonal matrix $D=D_{p,v}$, the $i$-th block of which is equal to $p_i v_i I_m$ with $I_m$ being the $m$-by-$m$ identity matrix. There is an infinite number of ways how this can be done (in theory). Indeed, for any proper sampling $\hat{S}$ and {\em any} positive $w\in \R^n$,  \eqref{a-ESO} holds with $v=tw$, where 
\[t = \lambda_{\text{max}} \left(D_{p,w}^{-1/2}(P\circ A^T A) D_{p,w}^{-1/2}\right),\]
since then $P\circ A^\top A \preceq t D_{p,w} = D_{p,v}.$

In practice, and especially in the big data setting when $n$ is very large, computing $v$ by solving an eigenvalue problem with an $N\times N$ matrix (recall that $N=nm$) will be either inefficient or impossible. It is therefore important that a ``good'' (i.e., small), albeit perhaps suboptimal $v$ can be identified {\em cheaply.} In all the cases we consider in this paper, the identification of $v$ can be done during the time the data is being read; or in  time roughly equal to a single pass through the data matrix $A$.

In the special case of  uniform\footnote{A sampling $\hat{S}$ is uniform if $p_i=p_j$ for all $i,j$. It is easy to see that then, necessarily, $p_i=\Exp[|\hat{S}|]/n$ for all $i$. The ESO inequality studied in \cite{PCDM} is of the form: $\Exp[\xi(x+h_{[\hat{S}]})] \leq \xi(x) + \tfrac{\Exp[|\hat{S}|]}{n}\left(\ve{\nabla \xi(x)}{h} + \tfrac{1}{2}\|h\|_v^2\right)$. In the case of uniform sampling, $x=0$ and $\xi(h) = \tfrac{1}{2}\|Ah\|^2$, we recover \eqref{a-ESO}.} samplings but for arbitrary smooth  functions (and not just quadratics; which is all we need here), inequality \eqref{a-ESO} was introduced and studied  by Richt\'{a}rik and Tak\'{a}\v{c} \cite{PCDM}, in the context of complexity analysis of (non primal-dual) parallel block coordinate descent  methods. A variant of ESO for arbitrary (possibly nonuniform) samplings was introduced in \cite{NSync}; and to the best of our knowledge that is the only work  analyzing a stochastic coordinate descent method which uses an arbitrary sampling. However, NSync is not a primal-dual method and applies to a different problem (unconstrained minimization of a smooth strongly convex function). Besides \cite{PCDM, NSync}, ESO inequalities were further studied  in~\cite{pegasos2, DQA,SPCDM,Hydra,APPROX,Hydra2,mS2GD}.

\subsection{Serial samplings}

The most studied sampling in literature on stochastic optimization is the \textit{serial sampling}, which corresponds to the selection of a single block $i \in [n]$. That is, $|\hat{S}| = 1$ with probability 1. The name ``serial'' is pointing to the fact that a method using such a sampling will typically be a serial (as opposed to being parallel) method;  updating a single block (dual variable) at a time.

A serial sampling is uniquely characterized by the vector of probabilities $p=(p_1,\dots,p_n)$, where $p_i$ is defined by \eqref{eq:p_i}. It turns out that we can  find a vector $v>0$ for which \eqref{a-ESO} holds for {\em any} serial sampling, {\em independently of its distribution} given by $p$. 

\begin{lemma}\label{l-serialv}
 If $\hat{S}$ is a serial sampling (i.e., if $|\hat S|=1$ with probability 1), then Assumption~\ref{ass-ESO} is satisfied for 
\begin{align}\label{a-vi}
v_i=\lambda_{\max} (A_i^\top A_i),\enspace i\in[n].
\end{align}
\end{lemma}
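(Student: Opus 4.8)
The plan is to unwind both sides of the ESO inequality \eqref{a-ESO} using the fact that a serial sampling is supported on singletons, and then reduce the whole statement to the elementary Rayleigh-quotient bound $h_i^\top (A_i^\top A_i) h_i \le \lambda_{\max}(A_i^\top A_i)\|h_i\|^2$.

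First I would record the structure of a serial $\hat S$: since $|\hat S| = 1$ with probability $1$, for each realization $\hat S = \{i\}$ for some $i \in [n]$, and the events $\{\hat S = \{i\}\}$, $i\in[n]$, form a partition of the probability space. Moreover $\Prob(\hat S = \{i\}) = \Prob(i\in\hat S) = p_i$ by definition \eqref{eq:p_i}. On the event $\hat S = \{i\}$ we have, using \eqref{eq:is9hssdsd}, $A h_{[\hat S]} = A_i h_i$. Hence, conditioning on which singleton is drawn,
\begin{equation*}
\Exp\!\left[\|A h_{[\hat S]}\|^2\right] = \sum_{i=1}^n p_i \|A_i h_i\|^2 = \sum_{i=1}^n p_i\, h_i^\top (A_i^\top A_i) h_i .
\end{equation*}

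Next I would bound each quadratic term. Since $A_i^\top A_i$ is symmetric positive semidefinite, $h_i^\top (A_i^\top A_i) h_i \le \lambda_{\max}(A_i^\top A_i)\|h_i\|^2 = v_i\|h_i\|^2$ for all $h_i\in\R^m$, with $v_i$ as in \eqref{a-vi}. Substituting this into the displayed identity and recalling the definition \eqref{eq:i9s89s8hsnorm} of the weighted norm gives
\begin{equation*}
\Exp\!\left[\|A h_{[\hat S]}\|^2\right] \le \sum_{i=1}^n p_i v_i \|h_i\|^2 = \|h\|_{p\cdot v}^2,
\end{equation*}
which is exactly \eqref{a-ESO}. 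This holds for every $h\in\R^N$, so Assumption~\ref{ass-ESO} is verified.

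I do not expect any real obstacle here: the only points requiring a moment of care are (i) checking that a serial sampling indeed puts all its mass on singletons so that no cross terms $h_i^\top A_i^\top A_j h_j$ with $i\ne j$ ever appear — this is where the serial assumption is used in an essential way — and (ii) noting that the argument is completely independent of the particular probabilities $p_i$, which is the point emphasized in the statement. No assumption beyond $p_i>0$ (Assumption~\ref{ass-proper}) is needed, and in fact even positivity is not used in the inequality itself.
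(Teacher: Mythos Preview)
Your proof is correct and follows essentially the same approach as the paper: expand the expectation over the singleton support of $\hat S$, rewrite $\|A_i h_i\|^2 = h_i^\top A_i^\top A_i h_i$, apply the Rayleigh-quotient bound, and recognize the result as $\|h\|_{p\cdot v}^2$. The paper's proof is a one-line chain of equalities and inequalities; yours is the same argument with more explanatory commentary.
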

\begin{proof}
Note that for any $h\in \R^N$,
\[\Exp [\|Ah_{[\hat{S}]}\|^2] = \sum_{i=1}^n p_i \|A h_{[\{i\}]}\|^2 \overset{\eqref{eq:is9hssdsd}}{=} \sum_{i=1}^n p_i (h_i A_i^\top A_i h_i) \leq \sum_{i=1}^n p_i \lambda_{\max}(A_i^\top A_i) \|h_i\|^2 \overset{\eqref{eq:i9s89s8hsnorm}}{=} \|h\|_{p\cdot v}^2.\]
\end{proof}

Note that $v_i$ is the largest eigenvalue of an $m$-by-$m$ matrix. If $m$ is relatively small (and in many machine learning applications one has $m=1$; as examples are usually vectors and not matrices), then the cost of computing $v_i$ is small. If $m=1$, then  $v_i$ is simply the squared Euclidean norm of the vector $A_i$, and hence one can compute all of these parameters in one pass through the data (e.g.,  during loading to memory).

\subsection{Parallel ($\tau$-nice) sampling}\label{subsec-taunince}

We now consider  $\hat{S}$ which selects subsets of $[n]$ of cardinality $\tau$, uniformly at random. In the terminology established in \cite{PCDM}, such $\hat{S}$ is called $\tau$-nice. This sampling satisfies $p_i=p_j$ for all $i,j\in [n]$; and hence it is uniform. 

This sampling is well suited for parallel computing. Indeed, Quartz could be implemented as follows. If we have $\tau$ processors available, then  at the beginning of  iteration $t$ we can assign each block (dual variable) in  $S_t$ to a dedicated processor. The processor assigned to $i$ would then compute $\Delta\alpha_i^t$ and apply the update. If all processors have  fast access to   the memory where all the data is stored, as is the case in a shared-memory multicore workstation, then this way of assigning workload to the individual processors does not cause any major problems.  Depending on the particular computer architecture and the size $m$ of the blocks (which will influence processing time), it may be more efficient to chose $\tau$ to be a multiple of the number of processors available, in which case in each iteration every processor updates more than one block.

The following lemma gives a closed-form formula for parameters $\{v_i\}$ for which the ESO inequality holds.

\begin{lemma}[compare with~\cite{APPROX}]\label{prop-ESO}
  If $\hat S$ is a $\tau$-nice sampling, then Assumption~\ref{ass-ESO} is satisfied for
\begin{align}\label{a-vtaunice}
v_i= \lambda_{\max}\left(\sum_{j=1}^d \left(1+\frac{(\omega_j-1)(\tau-1)}{n-1}\right) A_{ji}^\top A_{ji}\right),\enspace i\in[n],
\end{align}
where for each $j\in[d]$, $\omega_j$ is the number of nonzero blocks in the $j$-th row of matrix $A$, i.e.,
\begin{align}\label{a-omegaj}
\omega_j \eqdef |\{i\in[n]: A_{ji}\neq 0\}|,\qquad j\in[d].
\end{align}
 \end{lemma}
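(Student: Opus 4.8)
The plan is to expand $\|Ah_{[\hat S]}\|^2$ one row of the block matrix $A$ at a time, take the expectation over the $\tau$-nice sampling explicitly (this is possible because for such a sampling the first- and second-order inclusion probabilities have closed form), and then control the resulting cross terms by a Cauchy--Schwarz estimate that only ``sees'' the nonzero blocks in each row. This mirrors the ESO computation for uniform samplings in \cite{PCDM,APPROX}, specialized to the quadratic $h\mapsto \tfrac12\|Ah\|^2$.

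First I would use \eqref{eq:is9hssdsd} together with the fact that the $j$-th coordinate of $A_ih_i$ equals $A_{ji}h_i\in\R$ to write
\[
\|Ah_{[\hat S]}\|^2 = \sum_{j=1}^d\Big(\sum_{i\in\hat S}A_{ji}h_i\Big)^2 .
\]
Fix a row index $j$ and abbreviate $a_i\eqdef A_{ji}h_i$; note $a_i=0$ unless $i$ belongs to the set of $\omega_j$ nonzero blocks of row $j$ as in \eqref{a-omegaj}. Taking expectations and using that for a $\tau$-nice sampling $\Prob(i\in\hat S)=\tau/n$ for every $i$ and $\Prob(\{i,i'\}\subseteq\hat S)=\tfrac{\tau(\tau-1)}{n(n-1)}$ for $i\neq i'$, I get
\[
\Exp\Big[\Big(\sum_{i\in\hat S}a_i\Big)^2\Big]
= \frac{\tau}{n}\sum_{i=1}^n a_i^2 + \frac{\tau(\tau-1)}{n(n-1)}\sum_{i\neq i'}a_ia_{i'}
= \frac{\tau}{n}\sum_{i=1}^n a_i^2 + \frac{\tau(\tau-1)}{n(n-1)}\Big[\Big(\sum_{i=1}^n a_i\Big)^2 - \sum_{i=1}^n a_i^2\Big].
\]

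The key step is that the coefficient $\tfrac{\tau(\tau-1)}{n(n-1)}$ is nonnegative, so I may bound $(\sum_i a_i)^2$ from above; since at most $\omega_j$ of the $a_i$ are nonzero, Cauchy--Schwarz restricted to the support of row $j$ gives $(\sum_i a_i)^2\le\omega_j\sum_i a_i^2$. Substituting and regrouping the coefficients yields
\[
\Exp\Big[\Big(\sum_{i\in\hat S}a_i\Big)^2\Big]
\le \frac{\tau}{n}\Big(1+\frac{(\omega_j-1)(\tau-1)}{n-1}\Big)\sum_{i=1}^n a_i^2
= \frac{\tau}{n}\Big(1+\frac{(\omega_j-1)(\tau-1)}{n-1}\Big)\sum_{i=1}^n h_i^\top A_{ji}^\top A_{ji}h_i .
\]

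Finally I would sum over $j=1,\dots,d$, interchange the order of summation, and recognize inside the quadratic form over each $h_i$ the matrix $\sum_{j=1}^d\big(1+\tfrac{(\omega_j-1)(\tau-1)}{n-1}\big)A_{ji}^\top A_{ji}$; bounding its Rayleigh quotient by its largest eigenvalue $v_i$ as defined in \eqref{a-vtaunice} gives
\[
\Exp[\|Ah_{[\hat S]}\|^2] \le \frac{\tau}{n}\sum_{i=1}^n v_i\|h_i\|^2 = \sum_{i=1}^n p_iv_i\|h_i\|^2 = \|h\|_{p\cdot v}^2 ,
\]
which is precisely \eqref{a-ESO}. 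The only points requiring care are the sign of the cross-term coefficient (so that the Cauchy--Schwarz bound goes in the right direction) and the restriction of Cauchy--Schwarz to the $\omega_j$ nonzero entries; the degenerate cases $\tau=1$ (where the cross term vanishes and the bound reduces to Lemma~\ref{l-serialv}) and $n=1$ are immediate.
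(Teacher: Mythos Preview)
Your argument is correct and is precisely the computation underlying Theorem~1 in \cite{APPROX}, which is all the paper invokes: its proof reads in full ``In the $m=1$ case the result follows from Theorem~1 in~\cite{APPROX}. Extension to the $m>1$ case is straightforward.'' You have supplied that extension explicitly by observing that $A_{ji}h_i\in\R$ for any block dimension $m$, so the row-wise scalar analysis goes through unchanged; this is exactly the ``straightforward'' step the paper alludes to.
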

 \begin{proof}
In the $m=1$ case the result follows from Theorem 1 in~\cite{APPROX}. Extension to the $m>1$ case is straightforward.
\end{proof}

Note that $v_i$ is the largest eigenvalue of an $m$-by-$m$ matrix which is formed as the sum of $d$ rank-one matrices. The formation of all of these $n$ matrices takes time proportional to the number of nonzeros in $A$ (if the data is stored in a sparse format). Constants $\{\omega_j\}$ can be computed by scanning the data once (e.g., during loading-to-memory phase). Finally, one must compute $n$ eigenvalue problems for matrices of size $m\times m$. In most applications, $m=1$, so there is no more work to be done. If $m>1$, the cost of computing these eigenvalues would be small.

While for $\tau=1$ it was easy to find parameters $\{v_i\}$ for any sampling (and hence, as we will see, it will be easy to find an optimal sampling), this is not the case in the $\tau>1$ case. The task is in general a difficult optimization problem.  For some work in this direction we refer the reader to \cite{NSync}.

\subsection{Product sampling}\label{subsec-products}
In this section we give an example of a sampling $\hat S$ which can be both non-uniform and non-serial  (i.e., for which  $\Prob(|\hat{S}|=1)\neq 1$). We make the following \textit{group separability assumption}:  there is a partition $X_1,\dots, X_{\tau}$ of $[n]$ according to 
which the examples $\{A_i\}$
 can be partitioned into $\tau$ groups such that no feature is shared by any two examples belonging to 
different groups. 

Consider the following example with $m=1, n=5$ and $d=4$:

\[A = [A_1,A_2,A_3,A_4,A_5] = \left(\begin{matrix} 0 & 0 & 6 & 4 & 9\\ 
0 & 3 & 0 & 0 & 0\\
0 & 0 & 3 & 0 & 1\\
1 & 8 & 0 & 0 & 0\\
\end{matrix}\right)\]
If we choose $\tau=2$ and $X_1 = \{1,2\}, X_2=\{3,4,5\}$, then no row of $A$ has a nonzero in both a column belonging to $X_1$ and a column belonging to $X_2$.

With each $i\in[n]$ we now associate $l_i  \in [\tau]$ such that $i\in X_{l_i}$ and
define:
$$
{\cal S} \eqdef  X_{1}\times \cdots \times X_{\tau}.
$$
The \textit{product sampling}
 $\hat{S}$ is obtained by choosing $S \in {\cal S}$, uniformly at random; that is, via: \begin{equation}\label{eq:si9s8hs}\Prob(\hat{S}=S) = \frac{1}{|{\cal S}|} = \frac{1}{\prod_{l=1}^\tau |X_l|}, \quad S \in {\cal S}. \end{equation}
Then $\hat{S}$ is proper and 
\begin{align}\label{a-pips}p_i \eqdef \Prob(i\in \hat{S}) =   \frac{\prod_{l \neq l_i}|X_l|}{|{\cal S}|} \overset{\eqref{eq:si9s8hs}}{=} \frac{1}{|X_{l_i}|}, \qquad i \in [n].\end{align}
Hence the sampling is nonuniform as long as not all of the 
sets $X_l$ have the same cardinality. 
We next show that the product sampling $\hat S$ defined as above  allows
 the same stepsize parameter $v_i$ as the serial uniform sampling.
\begin{lemma}\label{l-pips}
  Under the group separability assumption,  Assumption~\ref{ass-ESO} is satisfied for  the product sampling $\hat S$ and
\begin{align*}
v_i=\lambda_{\max} (A_i^\top A_i),\quad i\in[n].
\end{align*}
\end{lemma}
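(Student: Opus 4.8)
The plan is to verify the ESO inequality \eqref{a-ESO} directly by exploiting the product structure of $\hat{S}$ together with the group separability assumption, which causes the cross terms between different groups to vanish. First I would decompose $\hat{S}$ as the disjoint union $\hat{S} = \bigcup_{l=1}^\tau \hat{S}^{(l)}$, where $\hat{S}^{(l)}$ is the (single, uniformly random) element picked from $X_l$; these $\tau$ choices are independent. Since $A h_{[\hat{S}]} = \sum_{l=1}^\tau A h_{[\hat{S}^{(l)}]}$, I would expand $\|A h_{[\hat{S}]}\|^2$ and take expectations. The key observation is that for $l \neq l'$, the vectors $A h_{[\hat{S}^{(l)}]}$ and $A h_{[\hat{S}^{(l')}]}$ have \emph{disjoint supports as indices into the rows of $A$}: by the group separability assumption, no feature (row $j$) is shared between an example in $X_l$ and an example in $X_{l'}$, so for every $j$, $A_{ji} \neq 0$ for at most one of the two relevant blocks. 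Hence $\langle A h_{[\hat{S}^{(l)}]}, A h_{[\hat{S}^{(l')}]} \rangle = 0$ deterministically, and the squared norm splits as $\sum_{l=1}^\tau \|A h_{[\hat{S}^{(l)}]}\|^2$.

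Next I would take expectations term by term. Since $\hat{S}^{(l)}$ is a serial (single-block) sampling over $X_l$ with $\Prob(\hat{S}^{(l)} = \{i\}) = 1/|X_l|$ for $i \in X_l$, I can apply the computation from the proof of Lemma~\ref{l-serialv} (restricted to the index set $X_l$): $\Exp[\|A h_{[\hat{S}^{(l)}]}\|^2] = \sum_{i \in X_l} \tfrac{1}{|X_l|} (h_i^\top A_i^\top A_i h_i) \leq \sum_{i \in X_l} \tfrac{1}{|X_l|} \lambda_{\max}(A_i^\top A_i)\|h_i\|^2$. Summing over $l = 1,\dots,\tau$ and using \eqref{a-pips}, namely $p_i = 1/|X_{l_i}|$, gives $\Exp[\|A h_{[\hat{S}]}\|^2] \leq \sum_{i=1}^n p_i \lambda_{\max}(A_i^\top A_i) \|h_i\|^2 = \|h\|_{p\cdot v}^2$ with $v_i = \lambda_{\max}(A_i^\top A_i)$, which is exactly Assumption~\ref{ass-ESO}.

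The main obstacle — really the only nontrivial point — is justifying the vanishing of the cross terms rigorously: one must argue carefully that group separability implies $(A h_{[\hat{S}^{(l)}]})_j \cdot (A h_{[\hat{S}^{(l')}]})_j = 0$ for every row $j$ and every realization, i.e. translate ``no feature is shared by examples in different groups'' into the statement that the row supports of $\{A_i : i \in X_l\}$ and $\{A_i : i \in X_{l'}\}$ are disjoint. Once that is pinned down the rest is just the two-line serial computation reused $\tau$ times. I would also remark that the argument shows something slightly stronger: the bound holds with the same $v$ even without averaging being necessary for the cross terms, since those cancel pathwise.
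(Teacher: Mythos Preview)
Your argument is correct. It is organized differently from the paper's proof, and the difference is worth noting.

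The paper decomposes by \emph{rows} of $A$: it defines $J_l=\{j:\Omega_j\subset X_l\}$, shows the $J_l$ partition $[d]$, writes $A^\top A=\sum_{l}\sum_{j\in J_l}A_{j:}^\top A_{j:}$, and then, for each fixed row $j\in J_l$, uses that two distinct indices in the same $X_l$ are never sampled together (so $\Prob(i\in\hat S,\,i'\in\hat S)=0$ for $i\neq i'$ in $X_l$) to reduce $\Exp[h_{[\hat S]}^\top A_{j:}^\top A_{j:}h_{[\hat S]}]$ to the diagonal sum. You instead decompose by the \emph{components of the sampling}: write $\hat S=\bigcup_l \hat S^{(l)}$, observe that group separability makes $Ah_{[\hat S^{(l)}]}$ and $Ah_{[\hat S^{(l')}]}$ have disjoint row supports and hence are orthogonal \emph{pathwise}, and then invoke Lemma~\ref{l-serialv} on each $\hat S^{(l)}$ separately.

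Both routes use the same two facts (cross-group terms vanish by feature disjointness; within-group cross terms vanish because only one block per group is picked), but package them differently. Your version is more modular---it literally reduces the product sampling to $\tau$ independent serial samplings and reuses the existing serial lemma---and it makes explicit the slightly stronger statement that the cross terms vanish deterministically, not merely in expectation. The paper's version stays closer to the $P\circ A^\top A$ viewpoint and does everything from scratch. Either is fine; yours is arguably the cleaner write-up.
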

\begin{proof}
For each $j\in[d]$, denote by $A_{j:}$  the $j$-th row of the matrix $A$ and  $\Omega_j$ the column index set of nonzero blocks in $A_{j:}$:
$\Omega_j\eqdef\{i\in[n]: A_{ji}\neq 0\}.$
For each $l\in[\tau]$, define:
\begin{align}\label{a-defJl}
J_l\eqdef\{j\in[d]: \Omega_j \subset X_l\}.
\end{align}

In words, $J_l$ is the set of features  associated with  the examples in $X_l$. By the group separability assumption, $J_1,\dots,J_\tau$ forms a partition of $[d]$, namely,
 \begin{align}\label{a-cupJi}
\bigcup_{l=1}^{\tau} J_l=[d];\enspace J_k\cap J_l=\emptyset,\enspace \forall k\neq l\in[\tau] .
\end{align} 
Thus,
\begin{align}\label{a-AtopA}
A^\top A=\sum_{j=1}^d A_{j:}^\top A_{j:}\overset{\eqref{a-cupJi}}{=}\sum_{l=1}^\tau 
\sum_{j\in J_l} A_{j:}^\top A_{j:}.
\end{align}
 Now fix $l\in[\tau]$ and $j\in J_l$. For any $h \in \R^ N$ we have:
 \begin{align*}
 \Exp[h_{[\hat S]} A_{j:}^\top A_{j:} h_{[\hat S]}]&
=\sum_{i,i'\in[n]} 
h_i^\top A_{ji}^\top A_{ji'} h_{i'} \Prob(i\in \hat S,i'\in\hat S)
=\sum_{i,i'\in \Omega_j} 
h_i^\top A_{ji}^\top A_{ji'} h_{i'} \Prob(i\in \hat S,i'\in\hat S).
 \end{align*}
Since $X_1,\dots,X_{\tau}$ forms a partition 
of $[n]$, then any two indexes belonging to the same subset $X_l$ will never be selected simultaneously in $\hat S$, i.e.,
$$
\Prob(i\in \hat S, i'\in \hat S)=\left\{
\begin{array}{ll}
p_i & \mathrm{if~~} i=i' \\
0 &  \mathrm{if~~} i\neq i'
\end{array}
\right. ,\enspace \enspace \forall i,i' \in X_l.
$$
Therefore,
\begin{align}\label{a-prpt2}
 \Exp[h_{[\hat S]} A_{j:}^\top A_{j:} h_{[\hat S]}]&=\sum_{i\in \Omega_j} 
h_i^\top A_{ji}^\top A_{ji} h_{i} p_i=\sum_{i=1}^ n 
h_i^\top A_{ji}^\top A_{ji} h_{i} p_i.
 \end{align}
It follows from~\eqref{a-AtopA} and~\eqref{a-prpt2} that:
\begin{align}\label{a-prps1}
\Exp[\|Ah_{[\hat S]}\|^2]&=\Exp[h_{[\hat S]} A^\top A h_{[\hat S]}]=
\sum_{l=1}^\tau 
\sum_{j\in J_l}\Exp[h_{[\hat S]} A_{j:}^\top A_{j:} h_{[\hat S]}]=\sum_{l=1}^\tau \sum_{j\in J_l}\sum_{i=1}^n 
h_i^\top A_{ji}^\top A_{ji} h_{i} p_i.
 \end{align}
Hence,
$\Exp[\|Ah_{[\hat S]}\|^2]
\overset{\eqref{a-cupJi}}{=}\sum_{j=1}^d \sum_{i=1}^n  h_i^\top A_{ji}^\top A_{ji} h_{i} p_i 
\leq \sum_{i=1}^n \lambda_{\max}(A_i^\top A_i)h_i^ \top h_i p_i =\|h\|_{p\cdot v}^2.$
\end{proof}

\subsection{Distributed sampling} \label{sec:ESO-distributed}

We now describe a sampling which is particularly suitable for a {\em distributed implementation of Quartz}. This sampling was first proposed in~\cite{Hydra} and later used in \cite{Hydra2}, where the distributed coordinate descent algorithm Hydra and its accelerated variant Hydra$^2$ were proposed and analyzed, respectively. Both methods were shown to be able to scale up to huge problem sizes (tests were performed on problem sizes of several TB; and up 50 billion dual variables in size).

Consider a distributed computing environment with $c$ nodes/computers. For simplicity, assume that $n$ is an integer multiple of $c$ 
and let the blocks $\{1,2,\dots,n\}$   be  partitioned into $c$ sets of equal size: ${\cal P}_1$, ${\cal P}_2$, \dots, ${\cal P}_c$. We assign partition ${\cal P}_l$ to node $l$. The data  $A_1,\dots,A_n$ and the dual variables  (blocks) $\alpha_1,\dots,\alpha_n$ are
partitioned accordingly and stored on the respective nodes.

At each iteration, all nodes $l\in\{1,\dots,c\}$ in parallel pick a  subset $\hat S_l$ of $\tau$ dual variables from those they own, i.e., from $\calP_l$, uniformly at random. That is, each node locally  performs a $\tau$-nice sampling, independently from the other nodes. Node $l$ computes the updates to the dual variables $\alpha_i$ corresponding to $i\in S_l$, and locally stores them.
Hence, in a single distributed iteration, Quartz updates the dual variables belonging to the set  $ \hat S \eqdef \cup_{l=1}^c \hat S_l$.  This defines a sampling, 
which we will call  $(c,\tau)$-\textit{distributed sampling}.
 
Of course, there are other important considerations pertaining to the distributed implementation  of Quartz, but we do not discuss them here as the focus of this section is on the sampling. However, it is possible to design a distributed communication protocol for the update of the primal variable.
 
 The following result gives a formula for admissible parameters $\{v_i\}$.
 
\begin{lemma}[compare with~\cite{Hydra2}]\label{prop-distriESO}
 If $\hat S$ is a $(c,\tau)$-distributed sampling,  then
 Assumption~\ref{ass-ESO} is satisfied for
 \begin{align}\label{a-bDiibetaj}
v_i= \lambda_{\max}\left( \sum_{j=1}^d \left(1+\frac{(\tau-1)(\omega_j-1)}{\max\left\{\frac{n}{c}-1,1\right\}}+ \left(\frac{\tau c}{n} - \frac{\tau-1}{ \max \{ \frac{n}{c}-1,1\} }\right) \frac{\omega_j'-1}{\omega_j'}\omega_j\right) A_{ji}^\top A_{ji}\right), \quad i \in[n],
\end{align}
where $\omega_j$ is the number of nonzero blocks in the $j$-th row of the matrix $A$, as defined previously in~\eqref{a-omegaj}, and
$\omega'_j$ is the number of partitions "active" at row $j$ of $A$, more precisely,
\begin{align}\label{a-wjprime}
\omega'_j \eqdef |\{l\in [c]: \{i\in \calP_l: A_{ji}\neq 0\}\neq \emptyset\}|, \qquad  j\in[d].
\end{align}
 \end{lemma}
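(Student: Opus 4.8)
The plan is to reduce the computation of admissible ESO parameters $\{v_i\}$ for the $(c,\tau)$-distributed sampling to a bound on the matrix $P \circ A^\top A$, exactly as in the general discussion following Assumption~\ref{ass-ESO}. First I would compute the pairwise inclusion probabilities $\Prob(i\in\hat S, i'\in\hat S)$ for this sampling, distinguishing three cases: (i) $i=i'$, where the probability is $p_i = \tau c / n$; (ii) $i\neq i'$ but $i,i'$ lie in the same partition $\calP_l$, where the two are picked by a single $\tau$-nice sampling on a set of size $n/c$, so the probability is $\tfrac{\tau(\tau-1)}{(n/c)(n/c-1)}$ (with the usual convention when $n/c=1$); and (iii) $i,i'$ in different partitions, where by independence of the per-node samplings the probability factorizes as $p_i p_{i'} = (\tau c/n)^2$. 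This is precisely the structure exploited in the Hydra/Hydra$^2$ analysis \cite{Hydra,Hydra2}.

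Next, since $\Exp[\|Ah_{[\hat S]}\|^2] = \sum_{j=1}^d \Exp[h_{[\hat S]}^\top A_{j:}^\top A_{j:} h_{[\hat S]}]$, I would fix a row index $j\in[d]$ and expand the $j$-th term as $\sum_{i,i'} h_i^\top A_{ji}^\top A_{ji'} h_{i'}\,\Prob(i\in\hat S,i'\in\hat S)$, restricting the sum to $i,i' \in \Omega_j$. Plugging in the three-case formula for the inclusion probabilities splits this into a diagonal part (from case (i)), an intra-partition off-diagonal part (case (ii)), and an inter-partition off-diagonal part (case (iii)). The inter-partition part, being governed by $p_i p_{i'}$, can be written as a rank-one term $\|\sum_i p_i A_{ji} h_i\|^2$-type expression; the standard trick is to complete the squares and bound the off-diagonal contributions by diagonal ones using the inequality $2 h_i^\top A_{ji}^\top A_{ji'} h_{i'} \le \|A_{ji} h_i\|^2 + \|A_{ji'} h_i'\|^2$ (or more carefully, by combining the intra- and inter-partition terms so that the coefficient of $\|A_{ji}h_i\|^2$ matches the claimed weight). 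The bookkeeping is designed so that the coefficient multiplying $h_i^\top A_{ji}^\top A_{ji} h_i$ ends up as $p_i$ times the parenthesized factor $1 + \tfrac{(\tau-1)(\omega_j-1)}{\max\{n/c-1,1\}} + \bigl(\tfrac{\tau c}{n} - \tfrac{\tau-1}{\max\{n/c-1,1\}}\bigr)\tfrac{\omega'_j-1}{\omega'_j}\omega_j$, where $\omega_j$ counts all active blocks in row $j$ and $\omega'_j$ counts active partitions. Summing over $j\in[d]$ and then over $i$, and bounding $h_i^\top(\sum_j c_{ji} A_{ji}^\top A_{ji})h_i \le \lambda_{\max}(\sum_j c_{ji} A_{ji}^\top A_{ji})\|h_i\|^2$ gives $\Exp[\|Ah_{[\hat S]}\|^2] \le \|h\|_{p\cdot v}^2$ with $v_i$ as in~\eqref{a-bDiibetaj}, verifying Assumption~\ref{ass-ESO}.

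The main obstacle is the off-diagonal estimate: one must be careful about how the intra-partition and inter-partition cross terms are grouped before applying Cauchy–Schwarz/AM–GM, since a naive split loses the dependence on $\omega'_j$ and on the sparsity structure, producing a weaker (larger) $v_i$. The correct grouping — treating the within-row blocks as living in $\omega'_j$ groups and invoking a convexity/counting argument on how mass is distributed across those groups — is exactly what produces the $\tfrac{\omega'_j-1}{\omega'_j}\omega_j$ term and makes the bound tight enough to reduce to the $\tau$-nice result of Lemma~\ref{prop-ESO} when $c=1$ (so $\omega'_j=1$ and the last summand vanishes) and to the serial result of Lemma~\ref{l-serialv} when $\tau=1$. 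Since the $m>1$ case follows from the $m=1$ case by the same blockwise argument used in Lemma~\ref{prop-ESO}, and the whole computation is a specialization of the ESO analysis already carried out for Hydra$^2$ in~\cite{Hydra2}, I would present the $m=1$ derivation in detail and simply cite~\cite{Hydra2} for the underlying probabilistic inequality, noting the straightforward extension to general $m$.
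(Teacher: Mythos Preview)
Your proposal is correct and follows the same underlying route as the paper, which in fact gives no self-contained argument at all: the paper's proof simply states that for $m=1$ the result is equivalent to Theorem~4.1 in~\cite{Hydra2} and that the extension to $m>1$ is straightforward. Your sketch of the pairwise-probability computation, the row-by-row decomposition, and the intra/inter-partition grouping leading to the $\tfrac{\omega_j'-1}{\omega_j'}\omega_j$ factor is precisely the content of that cited Hydra$^2$ theorem, so you are supplying the details that the paper delegates to the reference; your concluding plan (present the $m=1$ case and cite~\cite{Hydra2}, then extend to $m>1$) is exactly what the paper does, only more explicitly.
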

 \begin{proof}
 When $m=1$, the result is equivalent to Theorem 4.1 in~\cite{Hydra2}.  The extension to blocks ($m>1$) is straightforward.
 \end{proof}

Lemma~\ref{prop-ESO} is a special case of Lemma~\ref{prop-distriESO} when
only a single node ($c=1$) is used, in which case $\omega_j'=1$ for all $j\in[d]$. Lemma~\ref{prop-distriESO} also improves the constants $\{v_i\}$ derived in~\cite{Hydra}, where instead of  $\omega_j$ and $\omega_j'$ in~\eqref{a-bDiibetaj} one has
 $\max_j \omega_j$ and $\max_j \omega_j'$.

 Lemma~\ref{prop-distriESO} is expressed in terms of certain sparsity parameters associated with the data ($\{\omega_j\}$) and the partitioning ($\{\omega_j'\}$). However, it is possible to derive alternative ESO results for the $(c,\tau)$-distributed sampling. For instance, one can instead express the parameters $\{v_j\}$ without any sparsity assumptions, using only spectral properties of the data only. We have not included these results here, but in the $m=1$ case such results have been derived in \cite{Hydra2}. It is possible to adopt them to the $m=1$ case as we have done it with Lemma~\ref{prop-distriESO}.

\section{Main Result} \label{sec:main_result}

The complexity of our method is given by the following theorem.

\begin{theorem}[Main Result] \label{th-mainth}
Let Assumption \ref{ass:loss} ($\phi_i$ are $(1/\gamma)$-smooth) and Assumption~\ref{ass:reg} ($g$ is 1-strongly convex) be satisfied.  Let $\hat{S}$ be a proper sampling (Assumption~\ref{ass-proper}) and   $v_1,\dots,v_n$ be positive scalars satisfying  Assumption~\ref{ass-ESO}. Then the sequence of primal and dual variables
 $\{w^t,\alpha^t\}_{t\geq 0}$ of Quartz (Algorithm~\ref{algo:primaldual}) satisfies:
\begin{equation}\label{eq-con}
\Exp[P(w^t)-D(\alpha^t)]\leq (1-\theta)^t (P(w^0)-D(\alpha^0)),
\end{equation}
where \begin{align}\label{a-theta}
\theta=\min_i \frac{p_i \lambda \gamma n}{v_i+\lambda \gamma n}.
\end{align}
In particular, if we fix $\epsilon\leq P(w^0)-D(\alpha^0)$, then for
 \begin{equation}\label{eq:complexity-intro2} 
T\geq \max_i \left( \frac{1}{p_i} + \frac{v_i}{p_i \lambda \gamma n}\right)  \log \left( \frac{P(w^0)-D(\alpha^0)}{\epsilon} \right),\end{equation}
we are guaranteed that $\Exp[P(w^T)-D(\alpha^T)]\leq \epsilon.$
\end{theorem}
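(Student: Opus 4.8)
The plan is to track a suitable Lyapunov-type quantity and show it contracts by the factor $(1-\theta)$ in expectation at every iteration; since the duality gap dominates that quantity (and in fact, I will argue, equals it up to the right setup), \eqref{eq-con} follows by a trivial induction, and \eqref{eq:complexity-intro2} follows by taking logarithms and using $1-\theta \le e^{-\theta}$ together with $\theta = \min_i p_i\lambda\gamma n/(v_i+\lambda\gamma n)$, i.e. $1/\theta = \max_i(1/p_i)(1 + v_i/(\lambda\gamma n))$. So the real content is the one-step inequality $\Exp[P(w^t)-D(\alpha^t)\mid w^{t-1},\alpha^{t-1}] \le (1-\theta)(P(w^{t-1})-D(\alpha^{t-1}))$.

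To get the one-step bound I would work with the Fenchel decomposition already given in the excerpt,
\[
P(w)-D(\alpha) = \lambda\, GAP_g(w,\alpha) + \frac1n\sum_{i=1}^n GAP_{\phi_i}(w,\alpha_i),
\]
and estimate the two groups of terms separately after the primal step $w^t=(1-\theta)w^{t-1}+\theta\nabla g^*(\bar\alpha^{t-1})$ and the random dual step. For the $GAP_g$ part, I would use $1$-strong convexity of $g$ (equivalently $1$-smoothness of $g^*$): since $w^t$ is a convex combination of $w^{t-1}$ and the gap-zeroing point $\nabla g^*(\bar\alpha^{t-1})$, convexity of $GAP_g(\cdot,\alpha)$ in its first argument (it is $g(w)+g^*(\bar\alpha)-\langle w,\bar\alpha\rangle$, linear plus convex) gives $GAP_g(w^t,\alpha^{t-1}) \le (1-\theta)GAP_g(w^{t-1},\alpha^{t-1})$; one must then also account for $\bar\alpha$ changing from $\bar\alpha^{t-1}$ to $\bar\alpha^t$, which is where the ESO enters. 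For the $\phi_i$ part, I would take conditional expectation over $\hat S$: block $i$ is touched with probability $p_i$, and when touched, $\alpha_i^t$ moves a $\theta/p_i$ fraction of the way toward the gap-zeroing value $-\nabla\phi_i(A_i^\top w^t)$ (this is exactly Option II; Option I makes at least as much progress because it maximizes a lower bound), so that in expectation $\alpha_i^t = (1-\theta)\alpha_i^{t-1} + \theta(-\nabla\phi_i(A_i^\top w^t))$. Using $\gamma$-strong convexity of $\phi_i^*$ to bound $\phi_i^*$ at a convex combination, plus the smoothness inequality $\phi_i(A_i^\top w^t) + \langle A_i^\top w^t, -\nabla\phi_i(A_i^\top w^t)\rangle + \phi_i^*(-\nabla\phi_i(A_i^\top w^t)) = 0$, should give a per-block decrease of $GAP_{\phi_i}$ by roughly $(1-\theta)$, up to a penalty term proportional to $\|\Delta\alpha_i^t\|^2$ coming from the strong-convexity quadratic and from the mismatch between evaluating $g^*$ at $\bar\alpha^{t-1}$ versus $\bar\alpha^t$.

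The glue is the choice of $\theta$: the accumulated quadratic penalty terms are of the form $\frac{1}{2\lambda n}\Exp_{\hat S}\|Ah_{[\hat S]}\|^2$ with $h_i \propto \Delta\alpha_i^t/p_i$, which by Assumption~\ref{ass-ESO} is at most $\frac{1}{2\lambda n}\sum_i v_i p_i \|\Delta\alpha_i^t/p_i\|^2 = \frac{1}{2\lambda n}\sum_i \frac{v_i}{p_i}\|\Delta\alpha_i^t\|^2$, and $\theta$ is precisely calibrated (via $\theta/p_i \le \lambda\gamma n/(v_i+\lambda\gamma n)$, i.e. $v_i\theta/p_i \le \gamma(\lambda n)(1-\theta/p_i) - $ rearranged) so that this penalty is exactly absorbed by the favorable $\gamma$-strong-convexity terms, leaving a clean contraction. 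I expect the main obstacle to be the bookkeeping that couples the primal and dual steps: because the primal update uses $\bar\alpha^{t-1}$ while the gap after the step involves $\bar\alpha^t = \bar\alpha^{t-1} + (\lambda n)^{-1}\sum_{i\in S_t}A_i\Delta\alpha_i^t$, one must carefully expand $g^*(\bar\alpha^t)$ (using $1$-smoothness of $g^*$) and match the resulting cross terms $\langle \nabla g^*(\bar\alpha^{t-1}), A_i\Delta\alpha_i^t\rangle$ against the $\langle A_i^\top w^t, \alpha_i\rangle$ terms hidden inside $GAP_{\phi_i}$ — keeping the right $\theta$-weights and the right conditioning throughout is the delicate part, and it is exactly here that the "direct primal-dual" nature of the argument (not first bounding dual suboptimality) pays off by keeping the algebra linear.
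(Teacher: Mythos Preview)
Your plan uses exactly the right ingredients---$1$-smoothness of $g^*$ to expand the shift $\bar\alpha^{t-1}\to\bar\alpha^t$, the ESO inequality to control the resulting quadratic, $\gamma$-strong convexity of $\phi_i^*$ applied to the convex combination $(1-\theta p_i^{-1})\alpha_i^{t-1}+\theta p_i^{-1}(-\nabla\phi_i(A_i^\top w^t))$, and the calibration $\theta\le p_i\lambda\gamma n/(v_i+\lambda\gamma n)$ to absorb the penalty---and the observation that Option~I dominates Option~II is also how the paper handles the two options. The derivation of \eqref{eq:complexity-intro2} from \eqref{eq-con} is exactly as you say.

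Two points where the paper's execution differs from your outline are worth flagging. First, the paper does \emph{not} track the Fenchel pieces $GAP_g$ and $GAP_{\phi_i}$ separately. Instead it bounds $\Exp_t[-D(\alpha^t)]$ (a function of $\alpha^t$ alone) via the ESO-based overapproximation, bounds $P(w^t)$ (a function of $w^t$ alone) via convexity of $g$, and only then adds. Your plan to track $GAP_{\phi_i}(w^t,\alpha_i^t)$ against $GAP_{\phi_i}(w^{t-1},\alpha_i^{t-1})$ forces you to handle the deterministic move $w^{t-1}\to w^t$ and the random move $\alpha_i^{t-1}\to\alpha_i^t$ inside the same block term; that is precisely the ``delicate bookkeeping'' you anticipate, and the paper's $P/D$ split sidesteps it.

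Second, and more substantively, your sketch is missing one ingredient. After the cross terms are matched (the paper uses the identities $\theta\nabla g^*(\bar\alpha^{t-1})=w^t-(1-\theta)w^{t-1}$ and $\phi_i^*(\nabla\phi_i(A_i^\top w^t))=\langle\nabla\phi_i(A_i^\top w^t),A_i^\top w^t\rangle-\phi_i(A_i^\top w^t)$), what remains is
\[
\frac{1-\theta}{n}\sum_{i=1}^n\Big[\phi_i(A_i^\top w^t)-\phi_i(A_i^\top w^{t-1})-\big\langle A_i^\top w^t-A_i^\top w^{t-1},\nabla\phi_i(A_i^\top w^t)\big\rangle\Big],
\]
and this is killed by \emph{convexity of $\phi_i$}, not by smoothness or the Fenchel equality you cite. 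In your gap-based language, this is exactly the step $GAP_{\phi_i}(w^{t-1},\cdot)\to GAP_{\phi_i}(w^t,\cdot)$, where the $\phi_i(A_i^\top w)$ part moves; your proposal accounts for the change in $\alpha_i$ and for the cross term via $\langle A_i^\top w^t,\alpha_i\rangle$, but not for the residual first-order error in $\phi_i$ when $w$ moves. It is a small omission (convexity of $\phi_i$ is assumed), but without it the contraction does not close.
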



A result of a similar flavour but for a different problem and not in a primal-dual setting has been established in \cite{NSync}, where the authors analyze a parallel coordinate descent method, NSync, also  with an {\em arbitrary sampling}, for minimizing  a strongly convex function under an ESO assumption.

In the rest of this section we will specialize the above result to a few selected samplings. We then devote two separate sections to Quartz specialized to the $\tau$-nice sampling (Section~\ref{sec:MAIN-tau-nice}) and Quartz specialized to the $(c,\tau)$-distributed sampling (Section~\ref{MAIN:distributed} -- as we do a more detailed analysis of the results in these two cases. 

\subsection{Quartz with uniform serial sampling}\label{sec-uss}
We first look at the special case when $\hat{S}$ is the uniform serial sampling, i.e., when $p_i=1/n$ for all $i\in[n]$. 

\begin{corollary}\label{coro-univalue}
Assume that at each iteration of Quartz we update only one dual variable uniformly at random and 
use $v_i=\lambda_{\max} (A_i^\top A_i)$ for all $i\in[n]$. If we let $\epsilon\leq P(w^0)-D(\alpha^0)$ and
\begin{equation}\label{eq:s89s8hs}
T\geq  \displaystyle\left(n + \frac{ \max_i \lambda_{\max}(A_i^\top A_i)}{\lambda \gamma }\right)\log\left(\frac{P(w^0)-D(\alpha^0)}{ \epsilon}\right),
\end{equation}
then $\Exp[P(w^T)-D(\alpha^T)]\leq \epsilon$.
\end{corollary}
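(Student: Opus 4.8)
The plan is to obtain Corollary~\ref{coro-univalue} as an immediate specialization of Theorem~\ref{th-mainth} (the Main Result), combined with the ESO bound for serial samplings in Lemma~\ref{l-serialv}. There is essentially no obstacle to overcome here: the whole argument is a verification of hypotheses followed by an elementary simplification, so the ``hard part'' amounts only to keeping track of where the factor $n$ comes from and confirming that the leading term is data-independent.

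First I would check that all hypotheses of Theorem~\ref{th-mainth} hold for the pair $(\hat S, v)$ under consideration. The uniform serial sampling picks a single block $i\in[n]$ with probability $p_i=1/n>0$, hence it is proper and Assumption~\ref{ass-proper} is satisfied. Since $|\hat S|=1$ with probability one, Lemma~\ref{l-serialv} applies and shows that Assumption~\ref{ass-ESO} holds with $v_i=\lambda_{\max}(A_i^\top A_i)$ for every $i\in[n]$. Assumptions~\ref{ass:loss} and~\ref{ass:reg} are in force by the statement of the corollary. Thus Theorem~\ref{th-mainth} is applicable with this choice of sampling and stepsize vector.

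Next I would substitute $p_i=1/n$ and $v_i=\lambda_{\max}(A_i^\top A_i)$ into the per-block quantity appearing in the iteration bound \eqref{eq:complexity-intro2}:
\[\frac{1}{p_i}+\frac{v_i}{p_i\lambda\gamma n} \;=\; n + \frac{n\,\lambda_{\max}(A_i^\top A_i)}{\lambda\gamma n} \;=\; n + \frac{\lambda_{\max}(A_i^\top A_i)}{\lambda\gamma}.\]
Taking the maximum over $i\in[n]$, and noting that the term $n$ does not depend on $i$, gives $n + \max_i\lambda_{\max}(A_i^\top A_i)/(\lambda\gamma)$, which is exactly the coefficient of $\log((P(w^0)-D(\alpha^0))/\epsilon)$ in \eqref{eq:s89s8hs}. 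Therefore, whenever $T$ is at least as large as the right-hand side of \eqref{eq:s89s8hs}, Theorem~\ref{th-mainth} guarantees $\Exp[P(w^T)-D(\alpha^T)]\leq\epsilon$, which is the claim.

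As an alternative route one could start from the linear convergence statement \eqref{eq-con}, compute $\theta=\min_i p_i\lambda\gamma n/(v_i+\lambda\gamma n)$ for the present data and use $(1-\theta)^T\leq e^{-\theta T}$ to recover the same iteration count; but invoking \eqref{eq:complexity-intro2} directly is the shortest path and I would present the proof that way.
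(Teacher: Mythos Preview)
Your proposal is correct and follows exactly the same approach as the paper: the paper's proof is the single sentence ``The result follows by combining Lemma~\ref{l-serialv} and Theorem~\ref{th-mainth},'' and your write-up simply makes the substitution $p_i=1/n$, $v_i=\lambda_{\max}(A_i^\top A_i)$ into~\eqref{eq:complexity-intro2} explicit.
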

\begin{proof} The result follows by combining
Lemma~\ref{l-serialv} and Theorem~\ref{th-mainth}.
\end{proof}

 Corollary~\ref{coro-univalue} should  be compared with Theorem 5 
in \cite{SDCA} (covering the L2-regularized case) and Theorem 1 in \cite{ASDCA} (covering the case of general $g$).  They obtain the rate
$$
\left(n+ \frac{ \max_i \lambda_{\max}(A_i^\top A_i)}{\lambda \gamma }\right)\log\left(\left(n+\frac{ \max_i \lambda_{\max}(A_i^\top A_i)}{\lambda \gamma }\right)\left(\frac{D(\alpha^*)-D(\alpha^0)}{ \epsilon}\right)\right),
$$
where $\alpha^*$ is the dual optimal solution.  Notice that the dominant terms in the two rates exactly  match, although our logarithmic term is better and simpler.

\subsection{Quartz with optimal serial sampling (importance sampling)}

 According to Lemma~\ref{l-serialv}, the parameter $v$ for a serial sampling $\hat S$ is determined by~\eqref{a-vi} and is {\em independent} of the distribution of $\hat S$. We can then seek to maximize the quantity $\theta$ in~\eqref{a-theta} to obtain the best bound. A simple calculation reveals that the optimal probability is given by:
\begin{align}\label{a-optimalp}
\Prob(\hat{S}=\{i\}) = p^*_i \eqdef \frac{\lambda_{\max}(A_i^\top A_i)+\lambda \gamma n}{\sum_{i=1}^n \left(\lambda_{\max}(A_i^\top A_i)+\lambda \gamma n\right)}.
\end{align}
Using this sampling,  we obtain the following  iteration complexity bound, which is an improvement on the  bound for uniform probabilities \eqref{eq:s89s8hs}.
 
\begin{corollary}\label{coro-opimp}
Assume that at each iteration of Quartz we update only one dual variable at random according to the probability $p^*$ defined in~\eqref{a-optimalp} and 
use $v_i=\lambda_{\max} (A_i^\top A_i)$ for all $i\in[n]$. If we let $\epsilon\leq P(w^0)-D(\alpha^0)$  and
\begin{equation}\label{eq:is789ddj}
T\geq  \displaystyle\left(n + \frac{ \tfrac{1}{n}\sum_{i=1}^n\lambda_{\max}(A_i^\top A_i)}{\lambda \gamma}\right)\log\left(\frac{P(w^0)-D(\alpha^0)}{ \epsilon}\right),
\end{equation}
then  $\Exp[P(w^T)-D(\alpha^T)]\leq \epsilon$.
\end{corollary}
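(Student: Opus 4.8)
The plan is to derive Corollary~\ref{coro-opimp} exactly as Corollary~\ref{coro-univalue} was derived --- by combining Lemma~\ref{l-serialv} with Theorem~\ref{th-mainth} --- but this time using the optimized probability vector $p^*$ from \eqref{a-optimalp} in place of the uniform one. First I would invoke Lemma~\ref{l-serialv}: since $\hat S$ here is a serial sampling (it selects $\{i\}$ with probability $p_i^*>0$ and $|\hat S|=1$ almost surely), Assumption~\ref{ass-ESO} holds with $v_i=\lambda_{\max}(A_i^\top A_i)$, \emph{independently of the distribution} $p^*$. Hence Theorem~\ref{th-mainth} applies, and the iteration count is governed by $\max_i\bigl(\tfrac{1}{p_i^*}+\tfrac{v_i}{p_i^*\lambda\gamma n}\bigr) = \max_i \tfrac{1}{p_i^*}\bigl(1+\tfrac{v_i}{\lambda\gamma n}\bigr) = \tfrac{1}{\lambda\gamma n}\max_i \tfrac{v_i+\lambda\gamma n}{p_i^*}$.

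The one genuine computation is to check that $p^*$ as defined in \eqref{a-optimalp} is the minimizer of $\max_i \tfrac{v_i+\lambda\gamma n}{p_i^*}$ over the simplex $\{p : p_i>0,\ \sum_i p_i=1\}$, and to evaluate the resulting optimal value. This is the standard ``balancing'' argument: for fixed positive weights $c_i\eqdef v_i+\lambda\gamma n$, the quantity $\max_i c_i/p_i$ subject to $\sum_i p_i=1$ is minimized when all ratios are equal, i.e. $c_i/p_i\equiv\text{const}$, forcing $p_i \propto c_i$, which is precisely \eqref{a-optimalp}; the common value of the ratio is then $\sum_i c_i = \sum_i (v_i+\lambda\gamma n) = n\lambda\gamma n + \sum_i \lambda_{\max}(A_i^\top A_i)$. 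Therefore
\begin{equation*}
\max_i\frac{1}{p_i^*}\Bigl(1+\frac{v_i}{\lambda\gamma n}\Bigr) = \frac{1}{\lambda\gamma n}\sum_{i=1}^n\bigl(\lambda_{\max}(A_i^\top A_i)+\lambda\gamma n\bigr) = n + \frac{\tfrac{1}{n}\sum_{i=1}^n\lambda_{\max}(A_i^\top A_i)}{\lambda\gamma},
\end{equation*}
which is exactly the leading factor in \eqref{eq:is789ddj}. Plugging this into \eqref{eq:complexity-intro2} of Theorem~\ref{th-mainth} gives the claimed bound: for $T$ at least this quantity times $\log\bigl((P(w^0)-D(\alpha^0))/\epsilon\bigr)$ we get $\Exp[P(w^T)-D(\alpha^T)]\le\epsilon$.

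The only subtlety worth flagging --- and the closest thing to an ``obstacle'' --- is that the corollary statement should really be read as: \emph{with this particular choice of $p^*$} the generic bound \eqref{eq:complexity-intro2} simplifies to \eqref{eq:is789ddj}; one does not need to prove $p^*$ is optimal among \emph{all} serial samplings to establish the corollary (optimality is a bonus remark, already argued in the paragraph preceding the statement), only that substituting $p^*$ into the complexity expression yields the stated closed form. So the proof is genuinely short: cite Lemma~\ref{l-serialv} for admissibility of $v_i=\lambda_{\max}(A_i^\top A_i)$, substitute $p_i=p_i^*$ into $\max_i(\tfrac{1}{p_i}+\tfrac{v_i}{p_i\lambda\gamma n})$, simplify using $\sum_i p_i^*=1$ and the definition of $p_i^*$ to get $n+\tfrac{1}{n\lambda\gamma}\sum_i\lambda_{\max}(A_i^\top A_i)$, and invoke Theorem~\ref{th-mainth}. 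I expect the author's proof to be a one- or two-line pointer to Lemma~\ref{l-serialv} and Theorem~\ref{th-mainth} together with the trivial algebraic simplification above.
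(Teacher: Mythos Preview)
Your proposal is correct and matches the paper's approach: the paper does not give an explicit proof of this corollary, but the surrounding text makes clear it follows exactly as you describe --- combine Lemma~\ref{l-serialv} (valid for any serial sampling) with Theorem~\ref{th-mainth}, and substitute $p_i=p_i^*$ into the complexity expression to obtain the closed form. Your observation that the optimality of $p^*$ is a bonus (already handled in the paragraph preceding the corollary) rather than a required step is also spot on.
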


Note that in contrast with the serial uniform sampling, we now have dependence on the {\em average} of the eigenvalues. The above result should be compared with the complexity result of  Iprox-SDCA~\cite{IProx-SDCA}:
$$
\left( n+\frac{ \tfrac{1}{n}\sum_{i=1}^n\lambda_{\max}(A_i^\top A_i)}{\lambda \gamma }\right) \log\left( \left( n+ \frac{ \tfrac{1}{n} \sum_{i=1}^n \lambda_{\max}(A_i^\top A_i)}{\lambda \gamma }\right) \left(\frac{D(\alpha^*)-D(\alpha^0)}{ \epsilon}\right)\right),
$$
where $\alpha^*$ is the dual optimal solution.  Again, the dominant terms in the two rates exactly  match, although our logarithmic term is better and simpler.

\subsection{Quartz with product sampling}

In this section  we  apply Theorem~\ref{th-mainth} to the  case when $\hat S$ is the product sampling (see the description in Section~\ref{subsec-products}).  All the notation we use here was established there. 

\begin{corollary}\label{coro-ps}
Under the group separability assumption,  let $\hat S$ be the product sampling and  let $v_i=\lambda_{\max} (A_i^\top A_i)$ for all $i\in[n]$.  If we fix $\epsilon\leq P(w^0)-D(\alpha^0)$ and
\begin{equation*}
T\geq  \displaystyle \max_i\left(|X_{l_i}|  + \frac{  \lambda_{\max}(A_i^\top A_i) |X_{l_i}|}{\lambda \gamma n}\right)\log\left(\frac{P(w^0)-D(\alpha^0)}{ \epsilon}\right),
\end{equation*}
then $\Exp[P(w^T)-D(\alpha^T)]\leq \epsilon$.
\end{corollary}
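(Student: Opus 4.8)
The plan is to obtain this corollary as an immediate specialization of the main complexity theorem, using the ESO parameters already computed for the product sampling. Concretely, I would invoke Lemma~\ref{l-pips}, which asserts that under the group separability assumption the product sampling $\hat S$ together with the vector $v$ defined by $v_i = \lambda_{\max}(A_i^\top A_i)$ satisfies Assumption~\ref{ass-ESO}. Together with Assumptions~\ref{ass:loss} and~\ref{ass:reg} (which are the standing hypotheses), this places us exactly in the setting required by Theorem~\ref{th-mainth}.

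Next I would plug the sampling probabilities into the generic iteration bound \eqref{eq:complexity-intro2}. For the product sampling, equation \eqref{a-pips} gives $p_i = 1/|X_{l_i}|$, hence $1/p_i = |X_{l_i}|$ and
\[
\frac{v_i}{p_i\lambda\gamma n} = \frac{\lambda_{\max}(A_i^\top A_i)\,|X_{l_i}|}{\lambda\gamma n}.
\]
Substituting these two expressions into $\max_i\left(\tfrac{1}{p_i} + \tfrac{v_i}{p_i\lambda\gamma n}\right)$ yields precisely $\max_i\left(|X_{l_i}| + \tfrac{\lambda_{\max}(A_i^\top A_i)|X_{l_i}|}{\lambda\gamma n}\right)$, which is the quantity appearing in the statement. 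Theorem~\ref{th-mainth} then guarantees $\Exp[P(w^T)-D(\alpha^T)]\leq\epsilon$ for any $T$ at least this large times the logarithmic factor, and the proof is complete.

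There is essentially no obstacle at the level of this corollary: all the genuine work has already been carried out, the nontrivial part being Lemma~\ref{l-pips}, whose proof exploits the fact that group separability partitions the features into blocks $J_1,\dots,J_\tau$ and that two indices in the same $X_l$ are never sampled together, so the cross terms in $\Exp[\|Ah_{[\hat S]}\|^2]$ vanish and one recovers the same $v_i$ as in the serial case. If anything needs care in writing the corollary's proof, it is only to state the substitution cleanly and to note that $\epsilon \leq P(w^0)-D(\alpha^0)$ ensures the logarithm is nonnegative so that the bound is meaningful; both are routine. I would therefore keep the proof to a single sentence citing Lemma~\ref{l-pips} and Theorem~\ref{th-mainth}, exactly as done for Corollaries~\ref{coro-univalue} and~\ref{coro-opimp}.
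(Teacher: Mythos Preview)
Your proposal is correct and follows exactly the same route as the paper: the paper's proof is a single sentence citing Theorem~\ref{th-mainth}, Lemma~\ref{l-pips}, and~\eqref{a-pips}, which is precisely the combination you invoke. Your additional explanation of the substitution $1/p_i = |X_{l_i}|$ and the remark on the role of Lemma~\ref{l-pips} are accurate elaborations but add nothing beyond what the paper intends.
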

\begin{proof}
The proof follows directly from Theorem~\ref{th-mainth}, Lemma~\ref{l-pips} and~\eqref{a-pips}.
\end{proof}
Recall from Section~\ref{subsec-products} that the product sampling $\hat S$ has cardinality $\tau \geq 1$ and
 is non-uniform as long as all the sets $\{X_1,\dots,X_{\tau}\}$ do not have the same cardinality.
To the best of our knowledge, Corollary~\ref{coro-ps} is the first \textit{explicit} complexity bound of stochastic algorithm 
using non-serial and non-uniform sampling for composite convex optimization problem (the paper~\cite{NSync} only deals with smooth functions and the method is not primal-dual), albeit
under the group separability assumption.

Let us compare the complexity bound with the serial uniform case (Corollary~\ref{coro-univalue}):
$$
\frac{n + \frac{ \max_i \lambda_{\max}(A_i^\top A_i)}{\lambda \gamma }}{\max_i\left(|X_{l_i}|  + \frac{  \lambda_{\max}(A_i^\top A_i) |X_{l_i}|}{\lambda \gamma n}\right)}
\geq \min_i \frac{n}{|X_{l_i}|}.
$$
Hence the iteration bound of Quartz specialized to product sampling is at most a $\max_{i} |X_{l_i}|/n$ fraction of 
that of Quartz specialized to serial uniform sampling. The factor $\max_{i} |X_{l_i}|/n$ varies from $1/\tau$ to $1$, depending 
on the degree to which  the partition $X_1,\dots,X_{\tau}$ is balanced. A perfect linear speedup ($\max_{i} |X_{l_i}|/n=1/\tau$)
only occurs when the partition $X_1,\dots,X_{\tau}$ is perfectly balanced (i.e., the set $X_l$ have the same cardinality), in which case the product sampling is uniform (recall the definition of uniformity we use in this paper: $\Prob(i\in \hat S)=\Prob(i'\in \hat S)$ for all $i,i'\in [n]$). Note that if the  partition is not perfectly but sufficiently so, then the factor $\max_{i} |X_{l_i}|/n$
will be  close to the perfect linear speedup factor $1/\tau$.

\section{Quartz with $\tau$-nice Sampling (standard mini-batching)} \label{sec:MAIN-tau-nice}

We now specialize   Theorem~\ref{th-mainth}  to the case of the $\tau$-nice sampling.

\begin{corollary}\label{coro-taunice2}
Assume $\hat S$ is the $\tau$-nice sampling and $v$ is chosen as in~\eqref{a-vtaunice}. If we let $\epsilon\leq P(w^0)-D(\alpha^0)$ and 
\begin{align}\label{a-Tgeqtaunice2}
T\geq  \displaystyle
\left(\frac{n}{\tau}  + \frac{ \max_i \lambda_{\max}\left(\sum_{j=1}^d \left(1+\frac{(\omega_j-1)(\tau-1)}{n-1}\right) A_{ji}^\top A_{ji}\right)}{\lambda \gamma \tau}\right)\log\left(\frac{P(w^0)-D(\alpha^0)}{ \epsilon}\right),
\end{align}
then  $\Exp[P(w^T)-D(\alpha^T)]\leq \epsilon.$
\end{corollary}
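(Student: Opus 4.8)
The plan is to derive Corollary~\ref{coro-taunice2} as an immediate consequence of the Main Result (Theorem~\ref{th-mainth}) combined with the ESO computation for the $\tau$-nice sampling (Lemma~\ref{prop-ESO}). The work is essentially bookkeeping: substituting the correct values of $p_i$ and $v_i$ into the general complexity bound \eqref{eq:complexity-intro2}.

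First I would recall that the $\tau$-nice sampling is uniform with $p_i = \tau/n$ for all $i\in[n]$; this follows from the fact that $\hat S$ selects a subset of size $\tau$ uniformly at random, so $\Prob(i\in\hat S)=\tau/n$ by symmetry. Next, I would invoke Lemma~\ref{prop-ESO}, which guarantees that Assumption~\ref{ass-ESO} holds with the vector $v$ given by \eqref{a-vtaunice}, i.e., $v_i = \lambda_{\max}\left(\sum_{j=1}^d \left(1+\tfrac{(\omega_j-1)(\tau-1)}{n-1}\right) A_{ji}^\top A_{ji}\right)$. Since Assumptions~\ref{ass:loss} and~\ref{ass:reg} are in force by hypothesis, all the hypotheses of Theorem~\ref{th-mainth} are met.

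Then I would plug $p_i = \tau/n$ into the quantity $\tfrac{1}{p_i} + \tfrac{v_i}{p_i\lambda\gamma n}$ appearing in \eqref{eq:complexity-intro2}, obtaining $\tfrac{n}{\tau} + \tfrac{n v_i}{\tau \lambda\gamma n} = \tfrac{n}{\tau} + \tfrac{v_i}{\tau\lambda\gamma}$. Taking the maximum over $i$ — and noting that $n/\tau$ does not depend on $i$ — gives exactly $\tfrac{n}{\tau} + \tfrac{\max_i v_i}{\tau\lambda\gamma}$, which, after substituting the explicit form of $v_i$, is precisely the coefficient of the logarithm in \eqref{a-Tgeqtaunice2}. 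Hence for any $T$ at least this large times $\log\left(\tfrac{P(w^0)-D(\alpha^0)}{\epsilon}\right)$, Theorem~\ref{th-mainth} yields $\Exp[P(w^T)-D(\alpha^T)]\leq\epsilon$, which is the claim.

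There is no genuine obstacle here; the only point requiring a word of care is the interchange between "$\max_i$ of a sum" and the expression as written — but since $n/\tau$ is constant in $i$, the maximum passes through cleanly, so the identification is exact rather than merely an inequality. I would therefore write the proof as a one-line deduction: "The proof follows directly from Theorem~\ref{th-mainth}, Lemma~\ref{prop-ESO}, and the identity $p_i=\tau/n$."
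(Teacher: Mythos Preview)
Your proposal is correct and matches the paper's own proof essentially verbatim: the paper simply states that the result follows by combining Lemma~\ref{prop-ESO} and Theorem~\ref{th-mainth}, which is exactly the deduction you describe (with the substitution $p_i=\tau/n$ left implicit there).
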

\begin{proof} The result follows by combining
Lemma~\ref{prop-ESO} and Theorem~\ref{th-mainth}.
\end{proof}

Let us now have a detailed look at the above result; especially in terms of how it compares with the serial uniform case (Corollary~\ref{coro-univalue}). We do this comparison in Table~\ref{tab-comtauxxx}. For fully sparse data, we get {\em perfect linear speedup}: the bound in the second line of Table~\ref{tab-comtauxxx} is a $1/\tau$ fraction of the bound in the first line. For fully dense data, the condition number ($\kappa\eqdef\max_i \lambda_{\text{max}}(A_i^\top A_i)/(\gamma\lambda)$) is unaffected by mini-batching/parallelization. Hence, linear speedup is obtained if $\kappa=O(n/\tau)$.  For general data, the behaviour of Quartz with $\tau$-nice sampling interpolates these two extreme cases. That is, $\kappa$ gets multiplied by a quantity between $1/\tau$ (fully sparse case) and $1$ (fully dense case). It is convenient to write this factor in the form
\[\frac{1}{\tau}\left(1+\frac{(\tilde{\omega}-1)(\tau-1)}{n-1}\right),\]
where $\tilde{\omega}\in [1,n]$ is a measure of {\em average sparsity} of the data, using which we can write:

\begin{eqnarray}\label{a-Tgeqtaunice2xxx}
T(\tau) &\eqdef& \left(\frac{n}{\tau}  + \frac{ \left(1+\frac{(\tilde{\omega}-1)(\tau-1)}{n-1}\right) \max_i \lambda_{\max}(A_i^\top A_i) }{\lambda \gamma \tau}\right) \log\left(\frac{P(w^0)-D(\alpha^0)}{ \epsilon}\right).
\end{eqnarray}

\begin{table}[!ht]
\centering
 \begin{tabular}{| >{\centering\arraybackslash}m{0.15\textwidth}  | >{\centering\arraybackslash}m{1.2in} | >{\centering\arraybackslash}m{2.7in} | >{\centering\arraybackslash}m{0.8in}|}
\hline
Sampling $\hat{S}$ & Data & Complexity of Quartz \eqref{a-Tgeqtaunice2} & Theorem \\
\hline
Serial uniform  & Any data  & $$ n +\frac{\max_i \lambda_{\text{max}}(A_i^\top A_i)}{\lambda \gamma}$$ & Corollary \ref{coro-univalue}  \\
\hline
$\tau$-nice  & Fully sparse data ($\omega_j=1$ for all $j$)  & $$ \frac{n}{\tau}+\frac{\max_i \lambda_{\text{max}}(A_i^\top A_i)}{\lambda \gamma \tau}$$ &  Corollary~\ref{coro-taunice2}\\
\hline
$\tau$-nice & Fully dense data ($\omega_j=n$ for all $j$) & $$ \frac{n}{\tau}+\frac{\max_i \lambda_{\text{max}}(A_i^\top A_i)}{\lambda \gamma}$$ &  Corollary~\ref{coro-taunice2}\\
\hline
$\tau$-nice  & Any data  & $$ \frac{n}{\tau}+\frac{\left(1+\frac{(\tilde{\omega}-1)(\tau-1)}{n-1}\right)\max_i \lambda_{\text{max}}(A_i^\top A_i)}{\lambda \gamma \tau}$$ &  Corollary~\ref{coro-taunice2}\\
\hline
 \end{tabular}
\caption{Comparison of the complexity of Quartz with serial uniform sampling and $\tau$-nice sampling. }\label{tab-comtauxxx}
\end{table}

\subsection{Theoretical speedup factor}\label{sec:tau-nice-speedup-factor}

For simplicity of exposition,  let us now assume that $\lambda_{\text{max}}(A_i^\top A_i)=1$. We will now study the {\em theoretical speedup factor}, defined as:
\begin{align}\label{a-spth}
\frac{T(1)}{T(\tau)}
\overset{\eqref{a-Tgeqtaunice2xxx}}{=}\frac{\tau(1+\lambda\gamma n)}{1+\lambda \gamma n+\frac{(\tau-1)(\tilde{\omega}-1)}{(n-1)}}
=\frac{\tau}{1+\frac{(\tau-1)(\tilde{\omega}-1)}{(n-1)(1+\lambda\gamma n)}}\enspace.
\end{align}

That is, the speedup factor measures how much better Quartz is with $\tau$-nice sampling than in the serial uniform case (with $1$-nice sampling). Note that the speedup factor  is a concave and increasing function
 with respect to the number of threads $\tau$.  The value depends on two factors: the {\em relative sparsity} level of the data matrix $A$, expressed through the quantity
$\frac{\tilde{\omega}-1}{n-1}$ and the condition number of the problem, expressed through the quantity $\lambda\gamma n$. 
We provide below two lower bounds for the speedup factor:
\begin{align}\label{a-Twn}
\frac{T(1,1)}{T(1,\tau)} \geq \frac{\tau}{1+\frac{\tilde{\omega}-1}{n-1}}\geq  \frac{\tau}{2}\qquad \text{if} \qquad  1\leq  \tau\leq 2+\lambda \gamma n\enspace.
\end{align}

Note that the last term does not involve $\tilde{\omega}$. In other words,  linear speedup (modulus a factor of 2) is achieved at least until
$\tau=2+\lambda\gamma n$ (of course, we also require that $\tau\leq n$), regardless of the data matrix $A$. For instance, if $\lambda\gamma=1/\sqrt{n}$, which is a frequently used setting for the regularizer, then we get  {\em data independent linear speedup} up to mini-batch size $\tau=2+ \sqrt{n}$. Moreover, from the first inequality in \eqref{a-Twn} we see that there is {\em further data-dependent speedup}, depending on the average sparsity measure $\tilde{\omega}$. We give an illustration of this phenomenon in Figure~\ref{fig:thspeeduptau}, where
we plot the theoretical speedup factor~\eqref{a-spth} as a function of the number of threads $\tau$, for $n=10^6$, $\gamma=1$ and three values of $\tilde{\omega}$ and $\lambda$. Lookingat the plots from right to left, we see that for fixed $\lambda$, the speedup factor increases as  $\tilde{\omega}$ decreases, as described by~\eqref{a-spth}.  Moreover, as the regularization parameter $\lambda$ gets smaller an reaches the value $1/n$, the speedup factor is healthy for sparse data only.  However, for $\lambda=1/\sqrt{n}=10^{-3}$, we observe linear speedup up to   $\tau=\sqrt{n}=1000$, regardless of $\tilde{\omega}$ (the sparsity of the data), as predicted. There is additional data-driven speedup beyond this point, which is better for sparser data.

\begin{figure}[ht]
\centering
\subfigure[$\tilde{\omega} =10^2$, $n=10^6$, $\gamma=1$]{
     \includegraphics[width=0.3\textwidth]{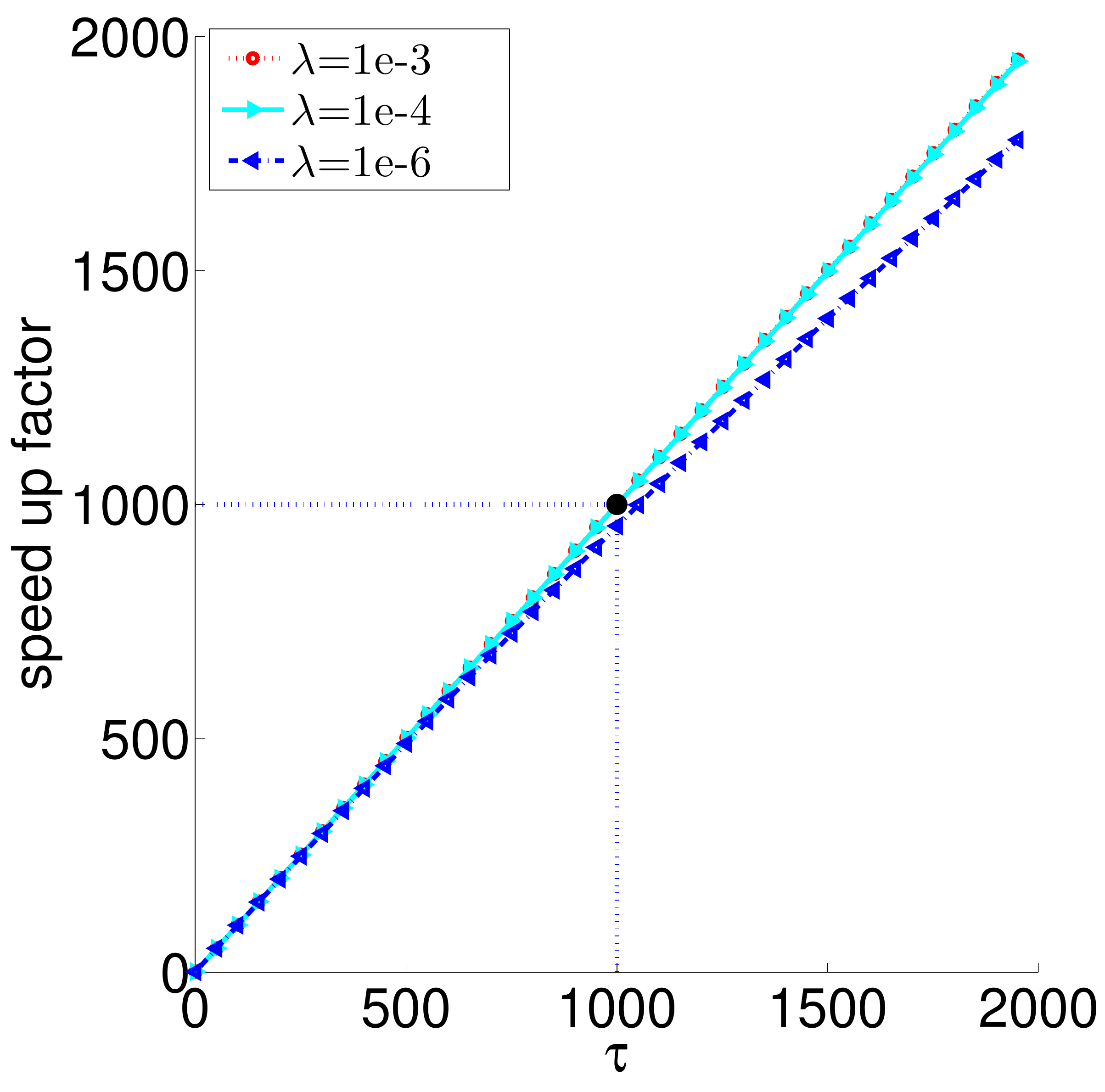}
    \label{figs:subfig1}
}
\subfigure[$\tilde{\omega}=10^4$, $n=10^6$, $\gamma=1$]{
    \includegraphics[width=0.3\textwidth]{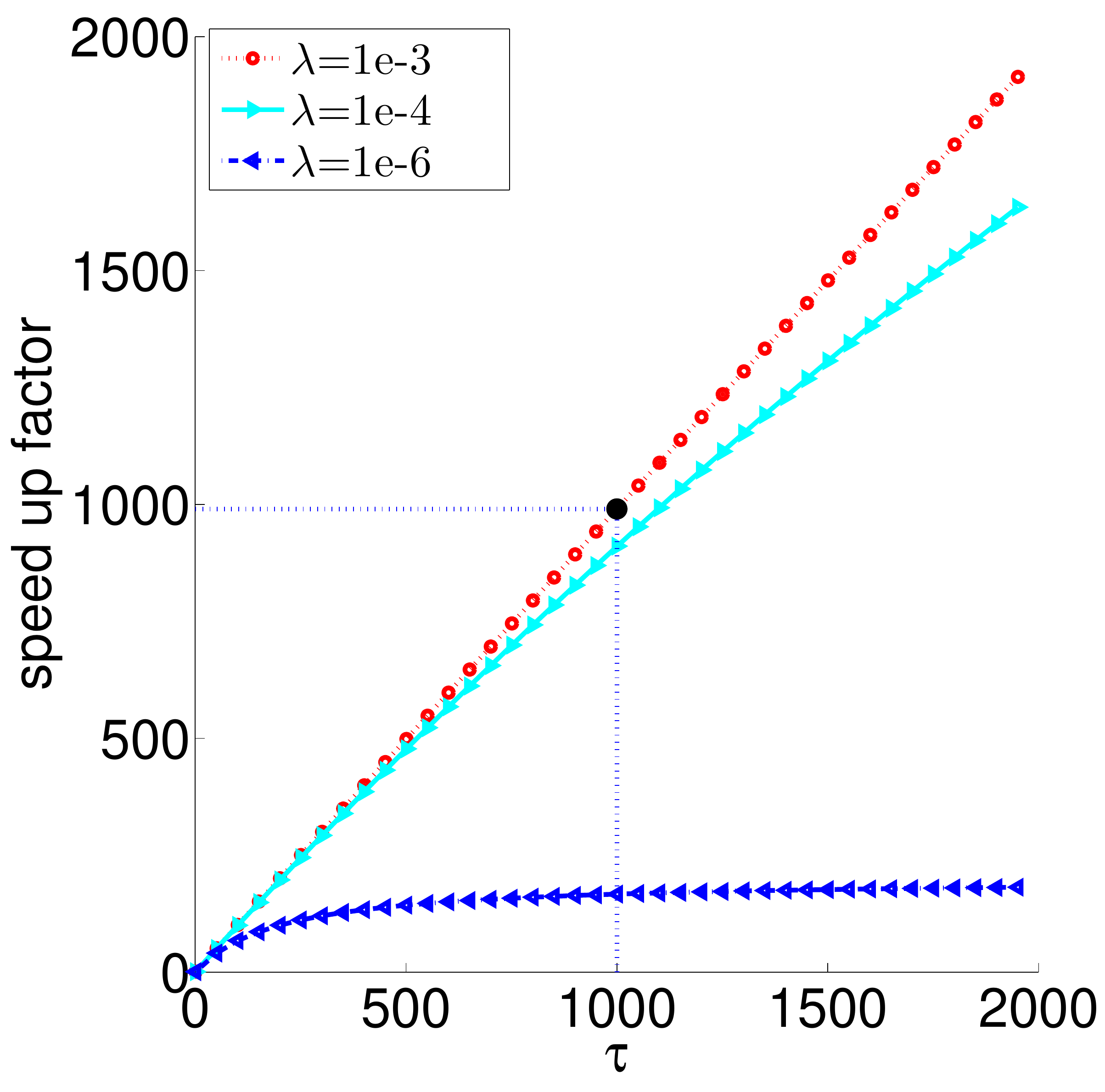}
    \label{figs:subfig2}
}
\subfigure[$\tilde{\omega}=10^6$, $n=10^6$, $\gamma=1$]{
    \includegraphics[width=0.3\textwidth]{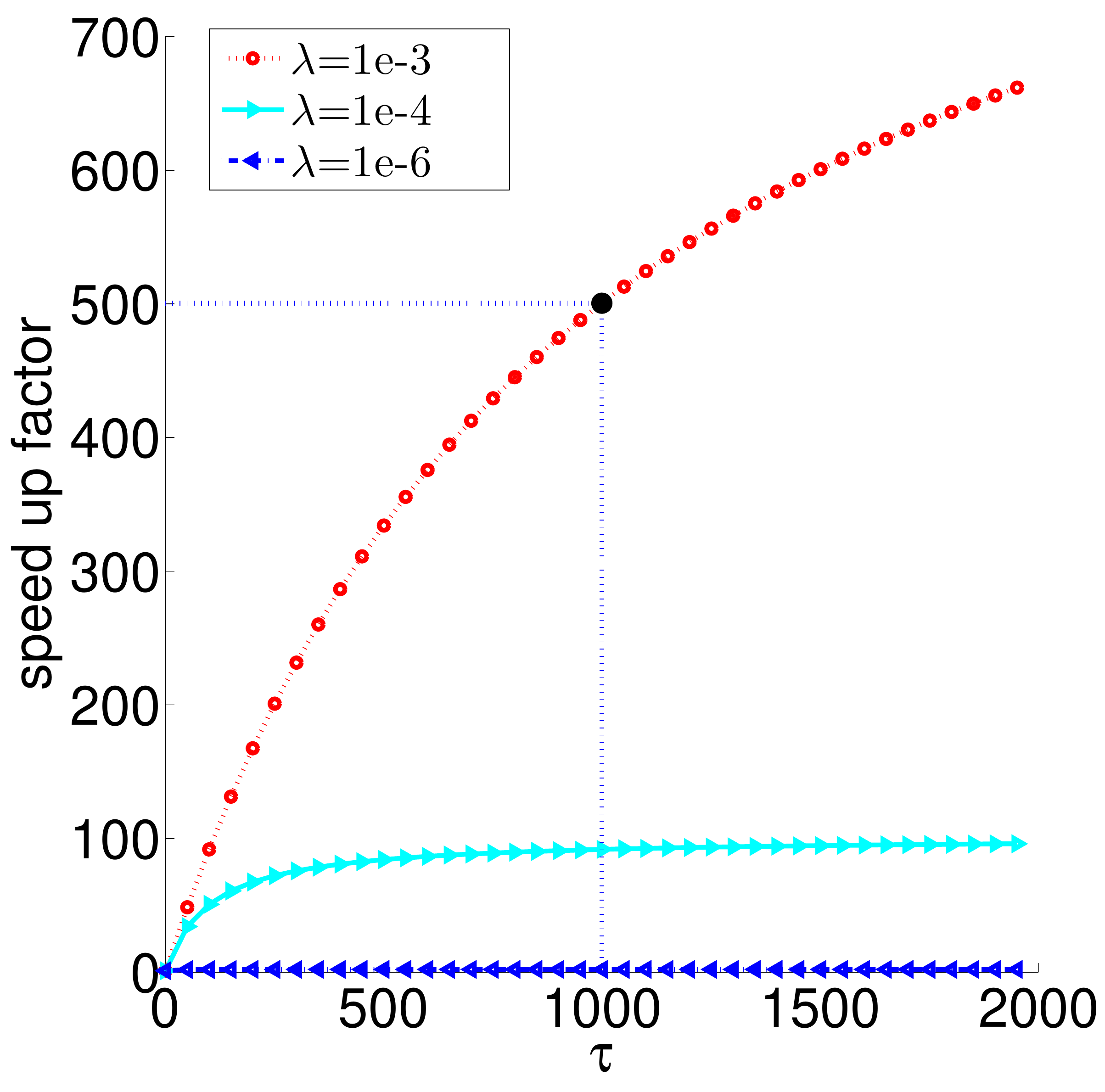}
    \label{figs:subfig3}
}

\caption[Optional caption for list of figures]{The speedup factor~\eqref{a-spth} as a function of $\tau$ for $n=10^6$,  $\gamma=1$, three regularization parameters and data of various sparsity levels.}
\label{fig:thspeeduptau}
\end{figure}

\subsection{Quartz vs existing primal-dual mini-batch methods} 

We now compare the above result with existing mini-batch stochastic dual coordinate ascent methods. A mini-batch variant of SDCA, to which Quartz with $\tau$-nice sampling can be naturally compared,   has been proposed and analyzed  previously in~\cite{pegasos2},~\cite{ASDCA} and~\cite{SPDC}. In~\cite{pegasos2},  the authors proposed to use a so-called \textit{safe mini-batching}, 
which is precisely equivalent  to finding the stepsize parameter $v$ satisfying Assumption~\ref{ass-ESO} (in the special case of $\tau$-nice sampling). However, they only analyzed the case where the functions $\phi_i$ are non-smooth.  In~\cite{ASDCA}, the authors studied  accelerated mini-batch SDCA (ASDCA), specialized to the case when the regularizer $g$ is the squared L2 norm. They showed that the complexity of ASDCA interpolates between that of SDCA and  accelerated gradient descent (AGD)~\cite{Nesterov07} through varying the mini-batch size $\tau$. In~\cite{SPDC}, the authors proposed a mini-batch extension of
 their stochastic primal-dual coordinate algorithm (SPDC). Both ASDCA and SPDC
 reach the same complexity as AGD when the 
mini-batch size equals to $n$, thus should be considered as accelerated algorithms.  The complexity bounds for all these algorithms are summarized in Table~\ref{tab-comtau}. To facilitate the comparison, we assume that
$ \max_i \lambda_{\max}(A_i^\top A_i)=1$ (since the analysis of ASDCA assumes this). In Table~\ref{tab-comtau987987} we  compare the complexities of SDCA, ASDCA, SPDC and Quartz in several regimes. We have used Lemma~\ref{lem:inequalities} to simplify  the bounds for Quartz. 

\begin{table}[!ht]
\centering
 \begin{tabular}{| >{\centering\arraybackslash}m{0.2\textwidth} | >{\centering\arraybackslash}m{3in} | >{\centering\arraybackslash}m{0.5in}|}
\hline
Algorithm & Iteration complexity & $g$\\
\hline
SDCA \cite{SDCA}  & $n + \tfrac{1}{\lambda \gamma}$ & $\tfrac{1}{2}\|\cdot\|^2$ \\
\hline
ASDCA \cite{ASDCA} & $4 \times  \max\left\{\frac{n}{\tau},\sqrt{\frac{n}{\lambda\gamma \tau}},\frac{1}{\lambda \gamma \tau},\frac{n^{\frac{1}{3}}}{(\lambda \gamma \tau)^{\frac{2}{3}}}\right\}$ & $\tfrac{1}{2}\|\cdot\|^2$\\
\hline 
SPDC \cite{SPDC} & $\frac{n}{\tau}+\sqrt{\frac{n}{\lambda \gamma \tau}}$ & general \\ \hline 
\bf{Quartz with $\tau$-nice sampling} & $\frac{n}{\tau}+\left(1+\frac{(\tilde \omega -1)(\tau-1)}{n-1}\right)\frac{1}{\lambda \gamma \tau}$ &  general\\
\hline
 \end{tabular}
\caption{Comparison of the iteration complexity of several primal-dual algorithms performing  stochastic coordinate ascent steps in the dual using a mini-batch of examples of size $\tau$ (with the exception of SDCA, which is a serial method using $\tau=1$. We assume that $\lambda_{\max}(A_i^\top A_i) = 1$ for all $i$ to facilitate comparison since this assumption has been implictly made in \cite{ASDCA}.}\label{tab-comtau}
\end{table}

\begin{table}[ht]
\centering

 \begin{tabular}{| >{\centering\arraybackslash}m{0.10\textwidth} | >{\centering\arraybackslash}m{0.95in} | >{\centering\arraybackslash}m{0.95in} | >{\centering\arraybackslash}m{0.95in}| >{\centering\arraybackslash}m{0.95in}| >{\centering\arraybackslash}m{0.95in}|}
\hline
Algorithm & $\gamma \lambda n = \Theta(\tfrac{1}{\sqrt{n}})$ & $\gamma\lambda n = \Theta(\tfrac{1}{\tau})$ & $\gamma\lambda n = \Theta(1)$ & $\gamma\lambda n = \Theta(\tau)$ & $\gamma\lambda n = \Theta(\sqrt{n})$\\
\hline
   & $\kappa = n^{3/2}$ & $\kappa = n\tau$ & $\kappa = n$ & $\kappa = n/\tau$ & $\kappa = \sqrt{n}$\\
\hline
\hline
SDCA \cite{SDCA} & $n^{3/2}$ & $n \tau$ & $n$ & $n$ & $n$\\
\hline 
ASDCA \cite{ASDCA} &  $n^{3/2}/\tau + n^{5/4}/\sqrt{\tau} + n^{4/3}/\tau^{2/3}$ & $n$ & $n/\sqrt{\tau}$ & $n/\tau$ &  $n/\tau + n^{3/4}/\sqrt{\tau}$\\
\hline 
SPDC \cite{SPDC} & $n^{5/4}/\sqrt{\tau}$ & $n$ & $n/\sqrt{\tau}$ & $n/\tau$ & $n/\tau + n^{3/4}/\sqrt{\tau}$\\ 
\hline 
\bf{Quartz ($\tau$-nice)} & $n^{3/2}/\tau + \tilde{\omega}\sqrt{n}$ & $n + \tilde{\omega}\tau$ & $n/\tau +\tilde{\omega}$ & $n/\tau$ & $n/\tau + \tilde{\omega}/\sqrt{n}$\\
\hline
 \end{tabular}
\caption{Comparison of leading factors in the complexity bounds of several methods in 5 regimes; where $\kappa=1/(\gamma\lambda)$ is the condition number. We ignore constant terms and hence one can replace each ``plus'' by a ``max''.}\label{tab-comtau987987}
\end{table}

\begin{lemma} \label{lem:inequalities} For any $\tilde{\omega}\in [1,n]$ and $\tau\in [1,n]$ we have 
\[\frac{(\tilde{\omega}-1)(\tau-1)}{n-1} \leq \frac{\tilde{\omega} \tau}{n} \leq 1+\frac{(\tilde{\omega}-1)(\tau-1)}{n-1} \leq 1+ \frac{\tilde{\omega} \tau}{n}.\]
\end{lemma}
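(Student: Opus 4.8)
The plan is to notice that the chain of three inequalities really contains only two independent claims. The rightmost inequality $1+\frac{(\tilde{\omega}-1)(\tau-1)}{n-1}\le 1+\frac{\tilde{\omega}\tau}{n}$ is obtained from the leftmost one by adding $1$ to both sides, so it suffices to prove
\begin{align*}
\text{(i)}\quad \frac{(\tilde{\omega}-1)(\tau-1)}{n-1}\le\frac{\tilde{\omega}\tau}{n},
\qquad\qquad
\text{(ii)}\quad \frac{\tilde{\omega}\tau}{n}\le 1+\frac{(\tilde{\omega}-1)(\tau-1)}{n-1}.
\end{align*}

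First I would dispose of the degenerate case $n=1$, where $\tilde{\omega},\tau\in[1,n]$ force $\tilde{\omega}=\tau=1$ and the statement is vacuous (the term $(n-1)^{-1}(\tilde\omega-1)(\tau-1)$ being read as $0$); so we may assume $n\ge 2$ and in particular $n-1>0$, making every denominator strictly positive. For (i), I would clear denominators by multiplying through by $n(n-1)>0$, reducing it to $n(\tilde{\omega}-1)(\tau-1)\le (n-1)\tilde{\omega}\tau$; expanding both sides, the difference (right minus left) equals $(n-\tilde{\omega})(\tau-1)+\tilde{\omega}(n-1)$, which is manifestly nonnegative since $\tilde{\omega}\le n$, $\tau\ge 1$, $\tilde{\omega}\ge 1$ and $n\ge 2$. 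For (ii), the same clearing of denominators turns it into $(n-1)\tilde{\omega}\tau\le n(n-1)+n(\tilde{\omega}-1)(\tau-1)$, and expanding shows the difference (right minus left) equals exactly $(n-\tilde{\omega})(n-\tau)$, which is nonnegative because $\tilde{\omega}\le n$ and $\tau\le n$. Finally, adding $1$ to both sides of (i) yields the third inequality, completing the chain.

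I do not expect any real obstacle here: the argument is elementary polynomial algebra once the denominators are cleared. The only mildly delicate points are (a) spotting the two factorizations $(n-\tilde{\omega})(\tau-1)+\tilde{\omega}(n-1)$ and $(n-\tilde{\omega})(n-\tau)$ that make the nonnegativity transparent rather than grinding through sign-checking of an unfactored quadratic, and (b) being explicit that the $n=1$ corner case is trivial so that division by $n-1$ is legitimate throughout.
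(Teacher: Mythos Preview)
Your proof is correct. The algebra checks out: for (i) the difference $(n-1)\tilde{\omega}\tau - n(\tilde{\omega}-1)(\tau-1)$ indeed equals $(n-\tilde{\omega})(\tau-1)+\tilde{\omega}(n-1)\ge 0$, and for (ii) the difference is exactly $(n-\tilde{\omega})(n-\tau)\ge 0$. Your observation that the third inequality is the first plus~$1$ is also how the paper organizes things.

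The route, however, differs from the paper's. Rather than clearing denominators, the paper introduces the auxiliary functions
\[
\phi_1(x)=x+\frac{(\tilde{\omega}-x)(\tau-x)}{n-x},\qquad \phi_2(x)=\frac{(\tilde{\omega}-x)(\tau-x)}{n-x},
\]
so that the four quantities in the chain are $\phi_2(1),\ \phi_2(0)=\phi_1(0),\ \phi_1(1),\ 1+\phi_2(0)$. The middle inequality then follows from $\phi_1$ being increasing on $[0,1]$ (since $\phi_1'(x)=(n-\tilde{\omega})(n-\tau)/(n-x)^2\ge 0$), and the first and third from $\phi_2$ being decreasing there (since $\phi_2'(x)=\phi_1'(x)-1\le 0$, using $(n-\tilde{\omega})(n-\tau)\le (n-1)^2\le (n-x)^2$). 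Your factorization $(n-\tilde{\omega})(n-\tau)$ for (ii) is precisely the numerator of $\phi_1'$, so the two arguments meet at the same algebraic core. The paper's approach is a bit more unified---one monotonicity idea handles both inequalities---while yours is more self-contained, avoids calculus, and makes the role of each hypothesis ($\tilde{\omega},\tau\in[1,n]$) visible term by term. Either is perfectly adequate for a lemma of this size.
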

\begin{proof} The second inequality follows by showing that the function $\phi_1(x) = x + \tfrac{(\tilde{\omega}-x)(\tau-x)}{n-x}$ is increasing, the first and third follow by showing that $\phi_2(x) = \tfrac{(\tilde{\omega}-x)(\tau-x)}{n-x}$ is decreasing on $[0,1]$. The monotonicity claims follow from the fact that
$\phi_1'(x) = \tfrac{n^2 + \tilde{\omega} \tau - n(\tilde{\omega} + \tau)}{(n-x)^2} = \tfrac{(n-\tilde{\omega})(n-\tau)}{(n-x)^2} \geq 0$
and
$\phi_2'(x) = \phi_1'(x) -1 = \tfrac{(n-\tilde{\omega})(n-\tau) - (n-x)^2}{(n-x)^2} \leq 0$ for all $x\in [0,1]$.
\end{proof}

Looking at Table~\ref{tab-comtau987987}, we see that in the $\gamma\lambda n=\Theta(\tau)$ regime (i.e., if the condition number is $\kappa=\Theta(n/\tau)$),  
Quartz matches the linear speedup (when compared to SDCA) of ASDCA and SPDC. When the condition number is roughly equal to the sample size ($\kappa=\Theta(n)$), 
then Quartz does better than both ASDCA and SPDC as long as $n/\tau + \tilde{\omega} \leq n/\sqrt{\tau}$. In particular, 
this is the case when the data is sparse: $\tilde{\omega}\leq n/\sqrt{\tau}$. If the data is even more sparse (and in many big data applications one has $\tilde{\omega}=O(1)$) and we have $\tilde{\omega}\leq n/\tau$, then Quartz significantly outperforms both ASDCA and SPDC. Note that Quartz can be better than both  ASDCA and SPDC even in the domain of accelerated methods, that is, when the condition number is larger than the number of examples: \begin{equation}\label{eq:siu98shs}\kappa =\frac{1}{\gamma \lambda} \geq n.\end{equation} Indeed, we have the following result, which can be interpreted as follows:  if $\kappa \leq \tau n/4$ (that is, $\lambda \gamma \tau n\geq 4$), then there are sparse-enough problems for which  Quartz is better than both ASDCA and SPDC.

\begin{proposition}
Assume that \eqref{eq:siu98shs} holds and that $\max_i \lambda_{\text{max}} (A_i^\top A_i) =1$. Then if the data is sufficiently sparse so that
\begin{align}\label{a-kgtng}
\lambda \gamma \tau n \geq \left(2+\frac{(\tilde \omega-1)(\tau-1)}{n-1}\right)^2,
\end{align}
 the  iteration complexity (in $\tilde O$ order) of Quartz is better than that of ASDCA and SPDC.
\end{proposition}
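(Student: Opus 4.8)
The idea is to reduce the claim to a single scalar inequality between leading complexity factors. Since $\max_i\lambda_{\max}(A_i^\top A_i)=1$ by assumption, Corollary~\ref{coro-taunice2} (equivalently, the last row of Table~\ref{tab-comtau}) gives the leading factor of Quartz with $\tau$-nice sampling as
\[
T_Q \eqdef \frac{n}{\tau}+\left(1+\frac{(\tilde{\omega}-1)(\tau-1)}{n-1}\right)\frac{1}{\lambda\gamma\tau},
\]
while those of SPDC and ASDCA are $T_{\mathrm{SPDC}}=\frac{n}{\tau}+\sqrt{\frac{n}{\lambda\gamma\tau}}$ and $T_{\mathrm{ASDCA}}=4\max\{\frac{n}{\tau},\sqrt{\frac{n}{\lambda\gamma\tau}},\frac{1}{\lambda\gamma\tau},\frac{n^{1/3}}{(\lambda\gamma\tau)^{2/3}}\}$. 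All three carry the same logarithmic factor $\log((P(w^0)-D(\alpha^0))/\epsilon)$, so ``in $\tilde O$ order'' it suffices to compare $T_Q$ with these two. The key remark is that $\sqrt{n/(\lambda\gamma\tau)}$ is a summand of $T_{\mathrm{SPDC}}$ and one of the four terms in the maximum defining $T_{\mathrm{ASDCA}}$; hence $T_{\mathrm{SPDC}}>\sqrt{n/(\lambda\gamma\tau)}$ and $T_{\mathrm{ASDCA}}\ge 4\sqrt{n/(\lambda\gamma\tau)}$, so the whole statement follows once I establish $T_Q\le\sqrt{n/(\lambda\gamma\tau)}$.

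To get this bound I would invoke the two hypotheses in turn. Condition \eqref{eq:siu98shs}, i.e.\ $\kappa=1/(\gamma\lambda)\ge n$, is equivalent to $\lambda\gamma n\le 1$, whence $\frac{n}{\tau}=(\lambda\gamma n)\cdot\frac{1}{\lambda\gamma\tau}\le\frac{1}{\lambda\gamma\tau}$. Substituting this into $T_Q$ and merging the two $\frac{1}{\lambda\gamma\tau}$ contributions yields
\[
T_Q\le\frac{1}{\lambda\gamma\tau}\left(2+\frac{(\tilde{\omega}-1)(\tau-1)}{n-1}\right).
\]
Now the sparsity hypothesis \eqref{a-kgtng} states precisely that $\left(2+\frac{(\tilde{\omega}-1)(\tau-1)}{n-1}\right)^2\le\lambda\gamma\tau n$; taking square roots and dividing through by $\lambda\gamma\tau$ gives
\[
T_Q\le\frac{\sqrt{\lambda\gamma\tau n}}{\lambda\gamma\tau}=\sqrt{\frac{n}{\lambda\gamma\tau}}.
\]
Therefore $T_Q\le\sqrt{n/(\lambda\gamma\tau)}<T_{\mathrm{SPDC}}$ and $T_Q\le\sqrt{n/(\lambda\gamma\tau)}<T_{\mathrm{ASDCA}}$, which is the assertion.

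There is no genuine obstacle here; the entire content sits in how \eqref{eq:siu98shs} and \eqref{a-kgtng} are set up, and the argument is a two-line manipulation once the right target $\sqrt{n/(\lambda\gamma\tau)}$ is identified. What is worth emphasizing is that the exponent $2$ in \eqref{a-kgtng} is not arbitrary: it is exactly what permits one, after using $\kappa\ge n$ to fold the $n/\tau$ term of $T_Q$ into $\frac{1}{\lambda\gamma\tau}$, to collapse the Quartz bound to the rate $\sqrt{n/(\lambda\gamma\tau)}$ that already lower-bounds both competing methods. (One could instead use Lemma~\ref{lem:inequalities} to express $1+\frac{(\tilde{\omega}-1)(\tau-1)}{n-1}$ in terms of $\tilde{\omega}\tau/n$, but this is purely cosmetic.) As a sanity check, for small $\tau$ the two hypotheses are jointly infeasible --- together they force $\lambda\gamma\tau n\ge 4$ and $\lambda\gamma n\le 1$, hence $\tau\ge 4$ --- so there is nothing to verify outside that range.
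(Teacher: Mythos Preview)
Your proof is correct and follows essentially the same route as the paper's: both reduce the claim to showing $T_Q \le \sqrt{n/(\lambda\gamma\tau)}$, use \eqref{eq:siu98shs} to fold $n/\tau$ into $1/(\lambda\gamma\tau)$ (yielding $T_Q \le (2+\frac{(\tilde\omega-1)(\tau-1)}{n-1})/(\lambda\gamma\tau)$), and then invoke \eqref{a-kgtng}. The only cosmetic difference is that the paper first argues that under the hypotheses the ASDCA maximum actually \emph{equals} $\tilde O(\sqrt{n/(\lambda\gamma\tau)})$, whereas you more directly observe that this term lower-bounds both competitors; your version is, if anything, slightly cleaner.
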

\begin{proof}
As long as $\lambda \gamma \tau n\geq 1$, which holds under our assumption, the iteration complexity of ASDCA is:
$$\tilde O \left( \max\left\{\frac{n}{\tau},\sqrt{\frac{n}{\lambda\gamma \tau}},\frac{1}{\lambda \gamma \tau},\frac{n^{\frac{1}{3}}}{(\lambda \gamma \tau)^{\frac{2}{3}}}\right\}\right)=\tilde O\left(\sqrt{\frac{n}{\lambda \gamma \tau}}\right).$$
which is already less than that of SPDC. Moreover, 
$$
\sqrt{\frac{n}{\lambda \gamma \tau}}\overset{\eqref{a-kgtng}}{\geq} \frac{2+\frac{(\tau-1)(\tilde \omega -1)}{n-1}}{\lambda \gamma \tau}
\overset{\eqref{eq:siu98shs}}{\geq} \frac{n}{\tau}+\frac{1+\frac{(\tau-1)(\tilde \omega -1)}{n-1}}{\lambda \gamma \tau}.
$$
\end{proof}


\section{Quartz with Distributed Sampling} \label{MAIN:distributed}

In this section we  apply Theorem~\ref{th-mainth} to the  case when $\hat S$ is the $(c,\tau)$-distributed sampling; see the description of this sampling in Section~\ref{sec:ESO-distributed}.

\begin{corollary}\label{coro-distri}
Assume that $\hat{S}$ is a  $(c,\tau)$-distributed sampling and $v$ is chosen as in~\eqref{a-bDiibetaj}.  If we let $\epsilon \leq P(w^0)-D(\alpha^0)$ and 
\begin{align}\label{a-Tgeqtaunice}
T\geq T(c,\tau) \times \log\left(\frac{P(w^0)-D(\alpha^0)}{ \epsilon}\right),
\end{align}
where 
\begin{align}\label{a-Tctau}
T(c,\tau)  \eqdef  \frac{n}{c\tau} + \max_i\frac{\lambda_{\max}\left( \sum_{j=1}^d \left(1+\frac{(\tau-1)(\omega_j-1)}{\max\{n/c-1,1\}}+  \left(\frac{\tau c}{n} - \frac{\tau-1}{\max\{n/c-1,1\}}\right) \frac{\omega_j'-1}{\omega_j'}\omega_j\right) A_{ji}^\top A_{ji}\right)}{\lambda\gamma c\tau},
\end{align}
then $\Exp[P(w^T)-D(\alpha^T)]\leq \epsilon.$
\end{corollary}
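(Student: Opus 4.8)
The plan is to obtain Corollary~\ref{coro-distri} as an immediate specialization of the main complexity bound, Theorem~\ref{th-mainth}, to the case where $\hat S$ is the $(c,\tau)$-distributed sampling, feeding in the admissible ESO vector $v$ supplied by Lemma~\ref{prop-distriESO}. The only genuine computation is to identify the marginal probabilities $p_i$ for this sampling. Recall its construction: each of the $c$ nodes independently performs a $\tau$-nice sampling on the $n/c$ dual variables it owns, and $\hat S = \cup_{l=1}^c \hat S_l$. Since each index $i$ belongs to exactly one partition $\calP_l$, we get, for every $i\in[n]$,
\[
p_i = \Prob(i\in \hat S_l) = \frac{\tau}{n/c} = \frac{c\tau}{n},
\]
which is the same for all $i$; in particular $\hat S$ is uniform and proper (using $1\le \tau\le n/c$, so $p_i\in(0,1]$), so Assumption~\ref{ass-proper} holds.

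Next I would check that all hypotheses of Theorem~\ref{th-mainth} are in place: Assumptions~\ref{ass:loss} and~\ref{ass:reg} are part of the standing setup, $\hat S$ is proper by the previous paragraph, and Lemma~\ref{prop-distriESO} guarantees that Assumption~\ref{ass-ESO} is satisfied with the vector $v$ whose entries are given by~\eqref{a-bDiibetaj}. Then Theorem~\ref{th-mainth} applies, and substituting $p_i = c\tau/n$ into the iteration bound~\eqref{eq:complexity-intro2} gives, for each $i$,
\[
\frac{1}{p_i} + \frac{v_i}{p_i\,\lambda\gamma n} = \frac{n}{c\tau} + \frac{v_i}{c\tau\,\lambda\gamma}.
\]
Taking the maximum over $i$ collapses the first summand (it is constant in $i$) and leaves $\frac{n}{c\tau} + \frac{\max_i v_i}{c\tau\,\lambda\gamma}$, which, upon inserting the closed-form expression~\eqref{a-bDiibetaj} for $v_i$, is exactly $T(c,\tau)$ as defined in~\eqref{a-Tctau}. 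Hence for $T\ge T(c,\tau)\log\!\big((P(w^0)-D(\alpha^0))/\epsilon\big)$ Theorem~\ref{th-mainth} yields $\Exp[P(w^T)-D(\alpha^T)]\le\epsilon$, which is the claim.

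I do not anticipate a real obstacle here, since the statement is a corollary obtained by composing two results already established in the excerpt. The one place that warrants care is the evaluation of $p_i$ for the distributed sampling and the observation that it is independent of $i$ — this is precisely what turns $\max_i (1/p_i)$ into the single clean term $n/(c\tau)$ and makes the bound take the stated product form; everything else is direct substitution.
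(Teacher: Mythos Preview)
Your proposal is correct and follows exactly the same approach as the paper: compute $p_i = c\tau/n$ for the $(c,\tau)$-distributed sampling, then combine Theorem~\ref{th-mainth} with Lemma~\ref{prop-distriESO} to obtain the stated bound.
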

\begin{proof}
If $\hat S$ is a $(c,\tau)$-distributed sampling, then 
$$p_i=\frac{c\tau}{n},\qquad i\in[n].$$
It now only remains to combine Theorem~\ref{th-mainth} and Lemma~\ref{prop-distriESO}.
\end{proof}


The expression~\eqref{a-Tctau} involves $\omega_j'$, which depends on the partitioning 
 $\{{\cal P}_1$, ${\cal P}_2$, \dots, ${\cal P}_c\}$ of the dual variable and the data. The following lemma says that
the effect of the partition is negligible, and in fact vanishes as $\tau$ increases.  It was proved  in~\cite[Lemma 5.2]{Hydra2}.

\begin{lemma}[\cite{Hydra2}]\label{l-Hy2beta1beta2}
If $n/c\geq 2$ and $\tau\geq 2$, then for all $j\in [d]$, we have
 $$\left(\frac{\tau c}{n} - \frac{\tau-1}{n/c-1}\right) \frac{\omega_j'-1}{\omega_j'}\omega_j\leq
 \frac{1}{\tau-1}\left(1+\frac{(\tau-1)(\omega_j-1)}{n/c-1}\right).$$
\end{lemma}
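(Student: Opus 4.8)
The plan is to treat the claimed inequality as a purely algebraic statement in the quantities $a \eqdef \tau c/n$, $b \eqdef (\tau-1)/(n/c-1)$, $\rho \eqdef (\omega_j'-1)/\omega_j'$, and $\omega \eqdef \omega_j$, under the standing constraints $n/c\geq 2$, $\tau\geq 2$, $1\leq \omega_j'\leq c$, and $1\leq\omega_j\leq n$ (and in fact $\omega_j'\leq \omega_j$, since an active partition contains at least one active block). The left-hand side is $(a-b)\rho\,\omega$ and the right-hand side is $\tfrac{1}{\tau-1}\bigl(1+b(\omega-1)\bigr)$. First I would record the sign of $a-b$: writing $a-b = \tfrac{\tau c}{n} - \tfrac{\tau-1}{n/c-1}$ over a common denominator, one checks that the numerator is $(\tau c)(n/c-1) - n(\tau-1) = \tau n - \tau c - n\tau + n = n - \tau c$, so $a - b = \tfrac{n-\tau c}{n(n/c-1)} = \tfrac{c(n-\tau c)}{n(n-c)}$. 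Hence $a-b$ has the sign of $n-\tau c$.

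If $n \leq \tau c$ then $a-b\leq 0$, the left-hand side is nonpositive (as $\rho,\omega\geq 0$), the right-hand side is nonnegative, and the inequality is immediate. So the substantive case is $n > \tau c$, i.e. $a-b>0$. In that case I would bound the left-hand side from above by dropping $\rho\leq 1$ (equivalently $\omega_j'\geq 1$), reducing the claim to
\[
(a-b)\,\omega \;\leq\; \frac{1}{\tau-1}\bigl(1+b(\omega-1)\bigr).
\]
Both sides are affine in $\omega$, so it suffices to verify the inequality at the two endpoints of the admissible interval for $\omega$ — namely $\omega=1$ and $\omega = n$ — provided the admissible set for $\omega$ is contained in $[1,n]$, which it is. At $\omega=1$ the claim reads $a-b\leq \tfrac{1}{\tau-1}$; substituting $a-b = \tfrac{c(n-\tau c)}{n(n-c)}$ and using $c(n-\tau c)\leq c\cdot n = cn$ against $n(n-c)(\tau-1)$ — or more simply $a-b\leq a = \tau c/n \leq 1/(\tau-1)$ whenever $\tau c(\tau-1)\leq n$, and handling the residual range $n<\tau c(\tau-1)$ directly from $a-b<a$ together with $\tau\geq 2$ — closes this endpoint after a short computation. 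At $\omega=n$ the claim becomes $(a-b)n \leq \tfrac{1}{\tau-1}\bigl(1+b(n-1)\bigr)$; since $a-b=\tfrac{c(n-\tau c)}{n(n-c)}$ gives $(a-b)n = \tfrac{c(n-\tau c)}{n-c}$, and $b(n-1) = \tfrac{(\tau-1)(n-1)}{n/c-1} = \tfrac{c(\tau-1)(n-1)}{n-c}$, the inequality reduces to $c(n-\tau c)(\tau-1) \leq (n-c) + c(\tau-1)(n-1)$, which after expanding is equivalent to $c(\tau-1)(n - \tau c - n + 1) = c(\tau-1)(1-\tau c)\leq n-c$; the left side is $\leq 0$ since $\tau c\geq 1$ and $\tau\geq 2$, while $n-c\geq 0$, so it holds.

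The main obstacle I anticipate is not conceptual but bookkeeping: being careful about which constraints on $\omega_j, \omega_j', c, \tau, n$ are actually available (in particular $\omega_j' \geq 1$, used to drop $\rho$, and $n/c\geq 2$, $\tau\geq 2$, used at the endpoints) and making sure the affine-in-$\omega$ reduction is legitimate, i.e. that checking $\omega\in\{1,n\}$ genuinely dominates all intermediate values. Since the inequality in the excerpt is attributed to \cite[Lemma 5.2]{Hydra2}, an alternative and shorter route is simply to cite that reference and remark that the present statement is its $m=1$ specialization verbatim; but the self-contained argument above is short enough to include if desired.
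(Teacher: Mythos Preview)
The paper does not actually give a proof of this lemma; it merely cites \cite[Lemma 5.2]{Hydra2}. Your proposal therefore goes beyond what the paper does, supplying a self-contained algebraic verification rather than a reference. The approach --- compute the sign of $a-b$, dispose of the case $a-b\leq 0$ trivially, then in the case $a-b>0$ drop $\rho\leq 1$ and reduce to an affine-in-$\omega$ inequality checked at the endpoints $\omega\in\{1,n\}$ --- is sound, and the endpoint $\omega=n$ computation is correct.

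One expository point: your treatment of the $\omega=1$ endpoint is more convoluted than necessary. You sketch two routes, the second of which (``handling the residual range $n<\tau c(\tau-1)$ directly from $a-b<a$ together with $\tau\geq 2$'') is left vague and does not obviously close. Your first route does work, and can be written in one line: in the case $n>\tau c$ one has
\[
(\tau-1)(a-b) \;=\; \frac{(\tau-1)c}{n}\cdot\frac{n-\tau c}{n-c} \;<\; 1\cdot 1 \;=\; 1,
\]
since $(\tau-1)c<\tau c<n$ gives the first factor $<1$, and $\tau c>c$ gives $n-\tau c<n-c$ and hence the second factor $<1$. With this cleanup the argument is complete.
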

According to this result, when each node owns at least two dual examples ($n/c\geq 2$) and picks and updates at least two examples in each iteration ($\tau\geq 2$), then
\begin{eqnarray}
T(c,\tau) &\leq&  \frac{n}{c\tau} + \left(1+\frac{1}{\tau-1}\right) \frac{\max_i\lambda_{\max}\left( \sum_{j=1}^d \left(1+\frac{(\tau-1)(\omega_j-1)}{n/c-1}\right) A_{ji}^\top A_{ji}\right)}{\lambda\gamma c\tau}\notag \\
&=& \frac{n}{c\tau} + \left(1+\frac{1}{\tau-1}\right)\left(1+\frac{(\tau-1)(\hat{ \omega}-1)}{n/c-1}\right) \frac{\max_i \lambda_{\text{max}}(A_i^\top A_i)}{\lambda \gamma c\tau}, \label{eq:s98s98hsdjj}
\end{eqnarray}
where $\hat{\omega}\in [1,n]$ is an {\em average sparsity measure} similar to that one we introduced in the study of $\tau$-nice sampling.  This bound is similar to that we obtained for the $\tau$-nice sampling; and can be interpreted in an analogous way. Note that as the first term ($n$) receives perfect mini-batch scaling (it is divided by $c\tau$), while the condition number $\max_i \lambda_{\text{max}}(A_i^\top A_i)/(\lambda \gamma)$ is divided by $c\tau$ but also multiplied by  $\left(1+\frac{1}{\tau-1}\right)\left(1+\frac{(\tau-1)(\hat{ \omega}-1)}{n/c-1}\right)$. However, this term is bounded by $2\hat{\omega}$, and hence if $\hat{\omega}$ is small, the condition number also receives a nearly perfect mini-batch scaling.

\subsection{Quartz vs DiSDCA} A distributed variant of SDCA, named DisDCA, has been proposed in~\cite{NIPSdistributedSDCA} and analyzed in~\cite{AnaDisSDCA13}. 
The authors of~\cite{NIPSdistributedSDCA} proposed a basic DisDCA variant (which was analyzed) and a practical  DisDCA variant (which was not analyzed). The complexity  of basic DisDCA was shown to be:
\begin{align}\label{a-TdisDCA}
\displaystyle\left(\frac{n}{c\tau} + \frac{\max_{i}\lambda_{\max}(A_i^\top A_i)}{\lambda \gamma }\right)\log\left(\frac{n}{c\tau} + \left(\frac{ \max_i\lambda_{\max}(A_i^\top A_i)}{\lambda \gamma }\right)\cdot\frac{D(\alpha^*)-D(\alpha^0)}{ \epsilon}\right),
\end{align}
where $\alpha^*$ is an optimal dual solution. Note that this rate is much worse than our rate. Ignoring the logarithmic terms, while the first expression $n/(c\tau)$ is the same in both results, 
if we replace  all $\omega_j$ by the upper bound $n$ and all $\omega_j'$ by the upper bound $c$ in~\eqref{a-Tctau}, then 
\begin{align*}
T(c,\tau)& \leq \frac{n}{c\tau} +  \left(\max_i \lambda_{\max}(A_i^\top A_i)\right)\cdot\frac{1+\frac{(\tau-1)(n-1)}{\max(n/c-1)}+(\frac{\tau c}{n} - \frac{\tau-1}{\max(n/c-1,1)}) \frac{c-1}{c}n}{\lambda \gamma c\tau}
\\&\leq \frac{n}{c\tau} + \frac{\max_i\lambda_{\max}(A_i^\top A_i)}{\lambda \gamma}.
\end{align*}
Therefore, the dominant term in~\eqref{a-Tgeqtaunice} is a strict lower bound of that in~\eqref{a-TdisDCA}. Moreover, 
it is clear that the gap between~\eqref{a-Tgeqtaunice} and~\eqref{a-TdisDCA} is large when the data is sparse. For instance, in the perfectly sparse case with $\hat{\omega}=1$, 
the bound \eqref{eq:s98s98hsdjj} for Quartz becomes
\[\frac{n}{c\tau} + \left(1+\frac{1}{\tau-1}\right) \frac{\max_i \lambda_{\text{max}}(A_i^\top A_i)}{\lambda \gamma c\tau},\]
which is much better than \eqref{a-TdisDCA}.



\subsection{Theoretical speedup factor}  \label{sec:distr-speedup}

%

In analogy with the discussion in Section~\ref{sec:tau-nice-speedup-factor}, we shall now analyze the theoretical speedup factor $T(1,1)/T(c,\tau)$ measuring the multiplicative amount by which Quartz specialized to the $(c,\tau)$-distributed sampling is better than Quartz specialized to the serial uniform sampling.



In Section~\ref{sec:MAIN-tau-nice}, we have seen how the speedup factor increases with $\tau$ when a mini-batch of examples is used  at each iteration following the $\tau$-nice sampling. As we have discussed before, this sampling is not particularly suitable for a distributed implementation (unless $\tau=n$; which in the big data setting where $n$ is very large may be asking for many more cores/threads that are available). This is because the implementation of updates using this sampling would either result in frequently idle nodes, or in increased data transfer. 

Often the data matrix $A$ is too large to be stored on a single node, or limited number of threads/cores are available per node.
We then want to implement Quartz in a distributed way ($c>1$). It is therefore necessary to understand how the speedup factor compares to the hypothetical situation in which we would have a large machine where all data could be stored (we ignore communication costs here) and hence a  $c\tau$-nice sampling could be implemented. That is, we are interested in comparing $T(c,\tau)$ (distributed implementation) and $T(1,c\tau)$ (hypothetical computer). If for simplicity of exposition we  assume that $\lambda_{\text{max}}(A_i^\top A_i)=1$, it is possible to argue that if $c\tau\leq n$, then \begin{equation}\label{eq:shs98shspp}\frac{T(1,1)}{T(c,\tau)} \approx \frac{T(1,1)}{T(1,c\tau)}.\end{equation}

In Figure~\ref{figcc:thspeeduptau} we plot the contour lines of the theoretical speedup factor in a log-log plot with axes corresponding to $\tau$ and $c$. The contours  are nearly perfect straight lines, which means that the speedup factor is approximately constant for those pairs $(c,\tau)$ for which $c\tau $ is the same. In particular, this means that \eqref{eq:shs98shspp} holds. Note that better speedup is obtained for sparse data then for dense data. However, in all plots we have chosen $\gamma=1$ and $\lambda=1/\sqrt{n}$; and hence we expect data independent linear speedup up to $c\tau =\Theta(\sqrt{n})$ -- a special line is depicted in all three plots which defines this contour.

\begin{figure}[!h]
\centering
\subfigure[$\omega=10^2$, $n=10^6$]{
     \includegraphics[width=0.3\textwidth]{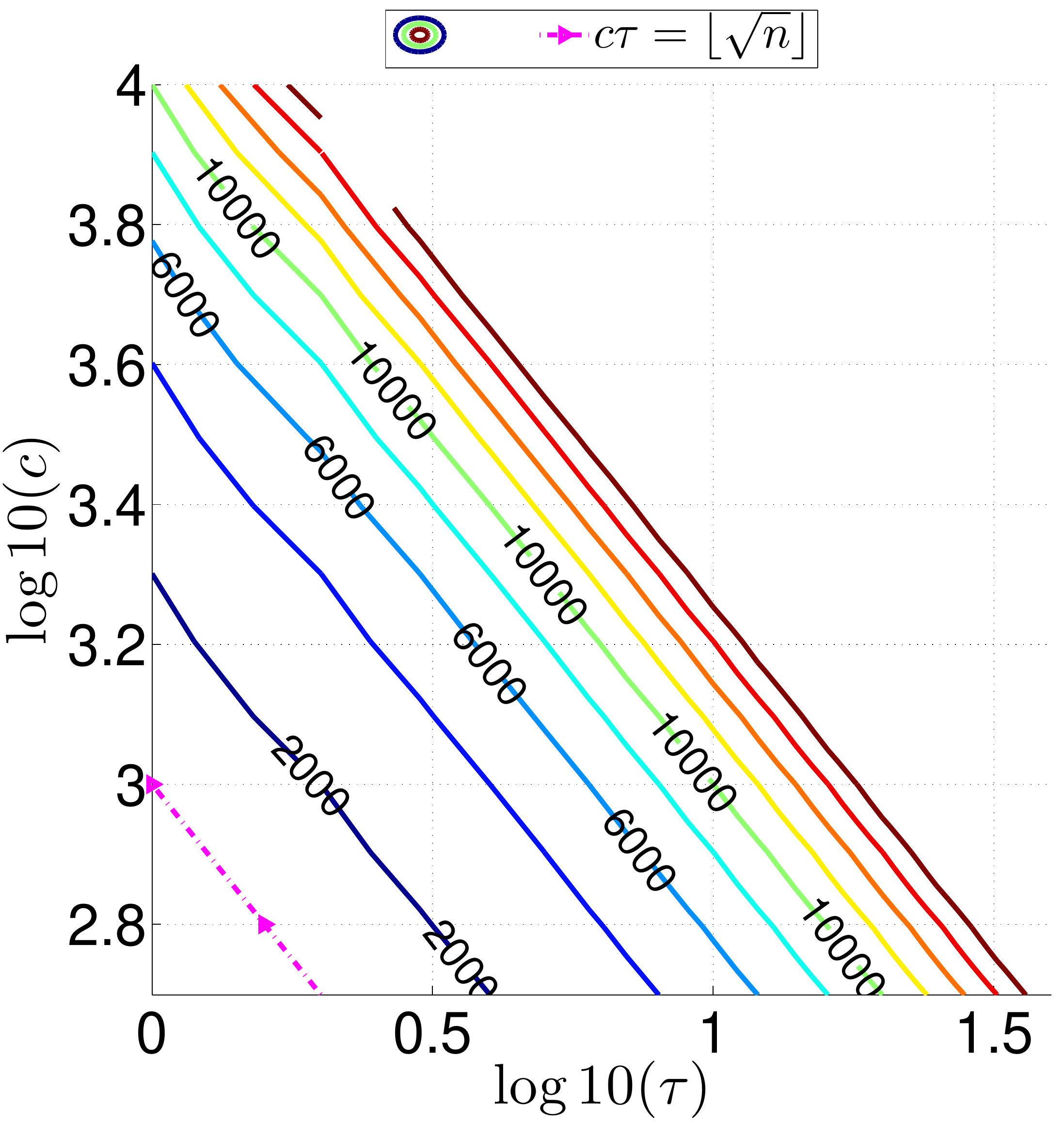}
    \label{figscc:subfig1}
}
\subfigure[$\omega=10^4$, $n=10^6$]{
    \includegraphics[width=0.3\textwidth]{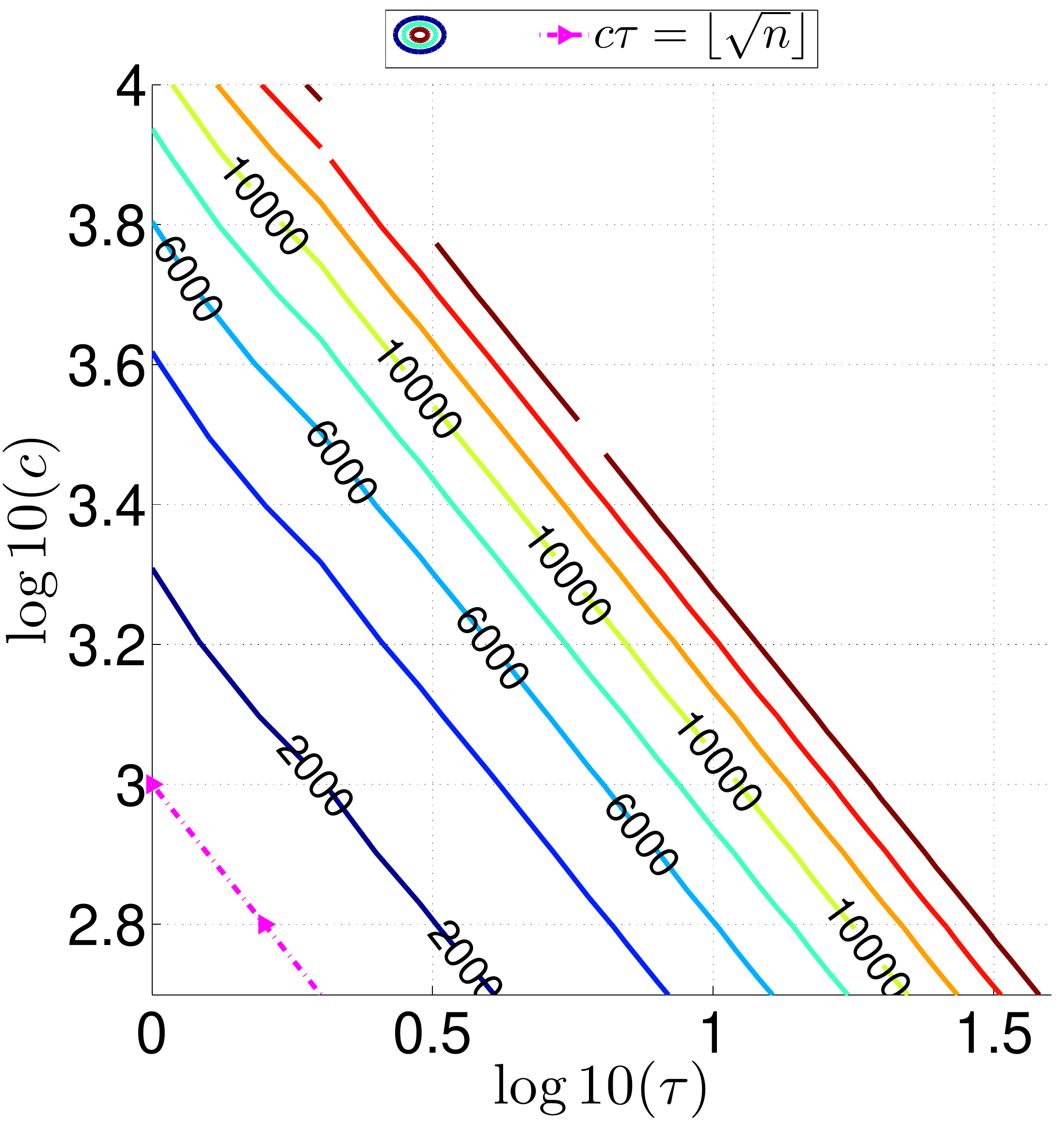}
    \label{figscc:subfig2}
}
\subfigure[$\omega=10^6$, $n=10^6$]{
    \includegraphics[width=0.3\textwidth]{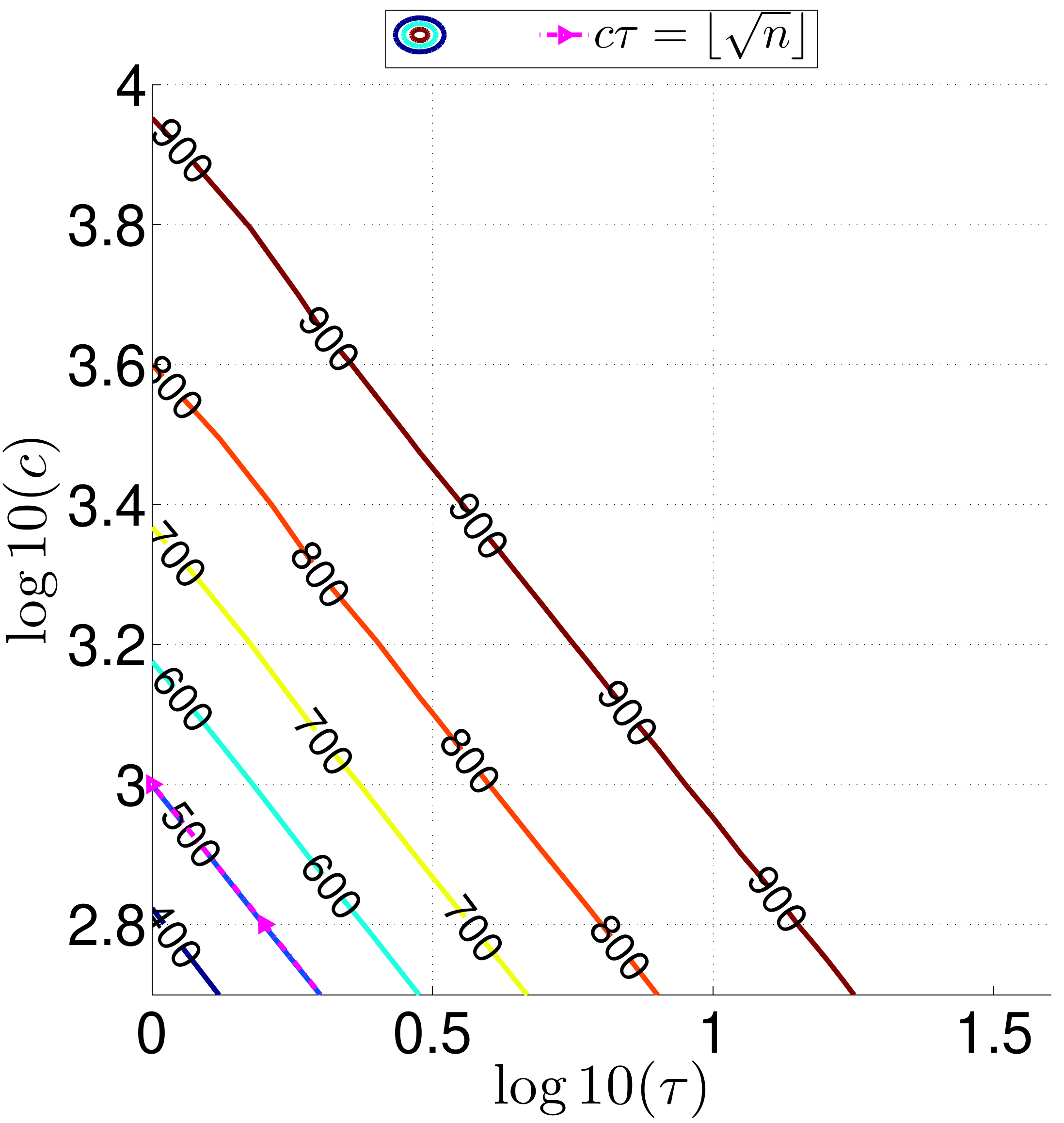}
    \label{figscc:subfig3}
}

\caption[Optional caption for list of figures]{Contour line plots of the speedup factor $T(1,1)/T(c,\tau)$ for $n=10^6$, $\gamma=1$, $\lambda=10^{-3}$, $\omega=10^2$ (Figure~\ref{figscc:subfig1}), 
$\omega=10^4$ (Figure~\ref{figscc:subfig2}), $\omega=10^6$ (Figure~\ref{figscc:subfig3}). Here, $\omega\in [1,n]$ is a degree of average sparsity of the data.}
\label{figcc:thspeeduptau}
\end{figure}

%
%
%

\section{Proof of the Main Result} \label{sec:proofs}

In this section we prove our main result (Theorem~\ref{th-mainth}). In order to make the analysis more transparent, we  will first  establish three auxiliary results.

\subsection{Three lemmas}

\begin{lemma}\label{lem:989sns}
Function $f:\R^{N}\rightarrow \R$ defined in~\eqref{a-defoff} satisfies the following inequality:
\begin{align}\label{a-LAA}
  f(\alpha+h)\leq  f( \alpha)+\<\nabla  f(\alpha),h>+\frac{1}{2\lambda n^2}h^\top A^\top A h,\qquad\forall \alpha,h\in \R^{N}
.\end{align}
\end{lemma}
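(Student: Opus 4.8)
The plan is to verify this directly from the definition $f(\alpha) = \lambda g^*\left(\frac{1}{\lambda n}\sum_i A_i \alpha_i\right)$ together with the assumption that $g$ is $1$-strongly convex, which (as noted in the excerpt) implies $g^*$ is $1$-smooth, i.e. $\nabla g^*$ is $1$-Lipschitz. First I would introduce the linear map $\alpha \mapsto \bar\alpha \eqdef \frac{1}{\lambda n} A\alpha = \frac{1}{\lambda n}\sum_i A_i\alpha_i$, so that $f(\alpha) = \lambda g^*(\bar\alpha)$. The gradient is then, by the chain rule, $\nabla f(\alpha) = \lambda \cdot \frac{1}{\lambda n} A^\top \nabla g^*(\bar\alpha) = \frac{1}{n} A^\top \nabla g^*(\bar\alpha)$.

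Next I would apply the standard descent lemma for $1$-smooth convex functions to $g^*$: since $g^*$ is $1$-smooth,
\[
g^*(u+d) \leq g^*(u) + \<\nabla g^*(u), d> + \tfrac12 \|d\|^2, \qquad \forall u,d \in \R^d.
\]
Now substitute $u = \bar\alpha$ (corresponding to $\alpha$) and $d = \frac{1}{\lambda n} Ah$ (the increment corresponding to $\alpha \mapsto \alpha + h$, since the map is linear). This gives
\[
g^*\!\left(\bar\alpha + \tfrac{1}{\lambda n}Ah\right) \leq g^*(\bar\alpha) + \tfrac{1}{\lambda n}\<\nabla g^*(\bar\alpha), Ah> + \tfrac{1}{2\lambda^2 n^2}\|Ah\|^2 .
\]
Multiplying through by $\lambda$ and using $\|Ah\|^2 = h^\top A^\top A h$ together with $\<\nabla g^*(\bar\alpha), Ah> = \<A^\top \nabla g^*(\bar\alpha), h> = \<n\nabla f(\alpha), h>$, the middle term becomes $\lambda \cdot \frac{1}{\lambda n} \cdot n \<\nabla f(\alpha),h> = \<\nabla f(\alpha), h>$ and the last term becomes $\frac{1}{2\lambda n^2} h^\top A^\top A h$, yielding exactly \eqref{a-LAA}.

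I do not expect any real obstacle here; the only points requiring a little care are (i) correctly propagating the constant $\frac{1}{\lambda n}$ through the chain rule so that the factors of $\lambda$ and $n$ land in the right places, and (ii) noting that because $\alpha \mapsto \bar\alpha$ is affine-linear the increment in the argument of $g^*$ is exactly $\frac{1}{\lambda n}Ah$ with no error terms. One could alternatively phrase the whole argument as: $f$ is the composition of the $1$-smooth function $\lambda g^*$... more precisely $f = \lambda g^* \circ L$ with $L = \frac{1}{\lambda n}A$, and composition with a linear map $L$ sends an $L_g$-smooth function to an $(L_g\|L\|^2)$-smooth function in the seminorm sense; here this gives smoothness constant $\lambda \cdot \frac{1}{\lambda^2 n^2} = \frac{1}{\lambda n^2}$ relative to the quadratic form $h^\top A^\top A h$, which is precisely the claimed inequality. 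Either route is short.
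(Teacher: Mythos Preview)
Your proposal is correct and follows essentially the same argument as the paper: use that $g^*$ is $1$-smooth (since $g$ is $1$-strongly convex), apply the standard descent-lemma upper bound to $g^*$ at the points $\tfrac{1}{\lambda n}A\alpha$ and $\tfrac{1}{\lambda n}A(\alpha+h)$, and multiply through by $\lambda$. The paper's proof is exactly this computation, with the constants tracked as you do.
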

\begin{proof}
Since $g$ is 1-strongly convex, $g^*$ is 1-smooth. Pick $\alpha,h\in \R^N$. Since, 
$f(\alpha) =\lambda g^*(\tfrac{1}{\lambda n} A \alpha)$,
we have 
\begin{eqnarray*}
f(\alpha+h) =  \lambda g^*\left(\tfrac{1}{\lambda n} A\alpha + \tfrac{1}{\lambda n} Ah\right) &\leq& \lambda \left(g^*\left(\tfrac{1}{\lambda n} A\alpha \right) + \ve{\nabla g^*\left(\tfrac{1}{\lambda n} A\alpha \right)}{\tfrac{1}{\lambda n} Ah} + \tfrac{1}{2}\left\|\tfrac{1}{\lambda n} Ah\right\|^2\right)\\
& = &  f(\alpha) +  \ve{\nabla f(\alpha)}{h} + \tfrac{1}{2\lambda n^2} h^T A^\top A h.
\end{eqnarray*}

%
%
%

\end{proof}

For $s=(s_1,\dots, s_n)\in \R^N$,  $h=(h_1,\dots,h_n)\in \
R^N$, where $s_i,h_i\in \R^m$ for all $i$,  we will for convenience write
\[\ve{s}{h}_p = \sum_{i=1}^n p_i \ve{s_i}{ h_i},\] 
where $p=(p_1,\dots,p_n)$ and  $p_i=\Prob(i\in \hat{S})$ for $i\in [n]$.

In the next lemma we give an expected separable overapproximation of the convex function $-D$.

\begin{lemma}\label{l-Dalphff}
If $\hat S$ and $v\in \R^n$ satisfy Assumption~\ref{ass-ESO}, then for all $\alpha,h\in \R^{N}$, the following holds:
\begin{equation}\label{a-Dalphah}
\begin{split}
&\Exp[-D(\alpha+h_{[\hat S]})]\\&\leq f(\alpha)+\<\nabla f(\alpha),h>_p+\frac{1}{2\lambda n^2}\|h\|_{p\cdot v}^2+\frac{1}{n}\sum_{i=1}^n\left[(1-p_i)\phi_i^*(-\alpha_i)+p_i\phi_i^*(-\alpha_i-h_i)\right].\end{split}
\end{equation}
\end{lemma}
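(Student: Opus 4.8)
The plan is to expand $-D(\alpha+h_{[\hat S]}) = f(\alpha+h_{[\hat S]}) + \psi(\alpha+h_{[\hat S]})$ using the definitions \eqref{a-defoff} and \eqref{a-defofpsi}, and to bound the two pieces separately before taking the expectation over $\hat S$. For the smooth part, I would apply Lemma~\ref{lem:989sns} with the increment $h_{[\hat S]}$ in place of $h$, giving
\[
f(\alpha+h_{[\hat S]}) \leq f(\alpha) + \langle \nabla f(\alpha), h_{[\hat S]}\rangle + \frac{1}{2\lambda n^2} h_{[\hat S]}^\top A^\top A h_{[\hat S]} = f(\alpha) + \langle \nabla f(\alpha), h_{[\hat S]}\rangle + \frac{1}{2\lambda n^2}\|A h_{[\hat S]}\|^2.
\]
Taking $\Exp[\cdot]$ and using linearity, the gradient term becomes $\Exp[\langle \nabla f(\alpha), h_{[\hat S]}\rangle] = \sum_{i=1}^n p_i \langle \nabla_i f(\alpha), h_i\rangle = \langle \nabla f(\alpha), h\rangle_p$, because block $i$ of $h_{[\hat S]}$ equals $h_i$ with probability $p_i$ and $0$ otherwise. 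The quadratic term is controlled directly by Assumption~\ref{ass-ESO}: $\Exp[\|A h_{[\hat S]}\|^2] \leq \|h\|_{p\cdot v}^2$. This handles the $f$-contribution to the right-hand side.

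For the separable part, since $\psi(\beta) = \frac1n\sum_i \phi_i^*(-\beta_i)$ and block $i$ of $\alpha + h_{[\hat S]}$ is $\alpha_i + h_i$ when $i\in\hat S$ and $\alpha_i$ otherwise, we get the pointwise identity
\[
\psi(\alpha + h_{[\hat S]}) = \frac1n\sum_{i=1}^n \left[ \mathbf{1}_{(i\in\hat S)}\,\phi_i^*(-\alpha_i - h_i) + \mathbf{1}_{(i\notin\hat S)}\,\phi_i^*(-\alpha_i)\right].
\]
Taking expectations and using $\Prob(i\in\hat S)=p_i$ yields $\Exp[\psi(\alpha+h_{[\hat S]})] = \frac1n\sum_i\big[p_i\phi_i^*(-\alpha_i-h_i) + (1-p_i)\phi_i^*(-\alpha_i)\big]$, which is exactly the last term in \eqref{a-Dalphah}. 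Adding the two expected bounds and recalling $-D = f + \psi$ gives the claim.

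This argument is essentially bookkeeping, so there is no real obstacle; the only point requiring a little care is the handling of the restriction operator $h_{[\hat S]}$ under expectation — making sure that $\Exp[\phi(h_{[\hat S]})]$ for a block-separable or block-linear functional $\phi$ correctly produces the $p_i$-weighted (resp.\ $p_i$-mixed) combination — and, for the quadratic term, that one invokes Assumption~\ref{ass-ESO} rather than attempting to expand $\Exp[h_{[\hat S]}^\top A^\top A h_{[\hat S]}]$ block by block (which would require the pairwise probabilities $\Prob(i\in\hat S, i'\in\hat S)$ and is precisely what the ESO assumption packages away). I would also note in passing that convexity of $f$ is not even needed for the inequality, only the $1$-smoothness of $g^*$ already used inside Lemma~\ref{lem:989sns}.
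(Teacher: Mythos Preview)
Your proposal is correct and follows essentially the same approach as the paper: split $-D=f+\psi$, apply Lemma~\ref{lem:989sns} to the smooth part, take expectations (using linearity for the gradient term and Assumption~\ref{ass-ESO} for the quadratic term), and compute $\Exp[\psi(\alpha+h_{[\hat S]})]$ exactly via block separability. The paper's proof is the same line by line, so there is nothing to add.
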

\begin{proof}
By definition of $D$, we have
$$
-D(\alpha+h_{[\hat S]}) \overset{\eqref{eq:dual}}{=} f(\alpha+h_{[\hat S]})+\psi(\alpha+h_{[\hat S]}),
$$
where $f$ and $\psi$ are defined in~\eqref{a-defoff} and~\eqref{a-defofpsi}. Now we apply Lemma~\ref{lem:989sns} and~\eqref{a-ESO} to bound the first term:
\begin{align*}
\Exp[f(\alpha+h_{[\hat S]})]& \overset{\eqref{a-LAA}}{\leq} 
\Exp[f( \alpha)+\<\nabla  f(\alpha),h_{[\hat S]}>+\frac{1}{2\lambda n^2}h_{[\hat S]}^\top A^\top A h_{[\hat S]}]\\
&\overset{\eqref{a-ESO}}{\leq} f(\alpha)+\Exp[\<\nabla  f(\alpha),h_{[\hat S]}>]+\frac{1}{2\lambda n^2}\|h\|_{p\cdot v}^2\\
&=f(\alpha)+\<\nabla f(\alpha),h>_p+\frac{1}{2\lambda n^2}\|h\|_{p\cdot v}^2.
\end{align*}
Moreover, since $\psi$ is block separable, we can write
\begin{align*}
\Exp[\psi(\alpha+h_{[\hat S]})]
&\overset{\eqref{a-defofpsi}}{=}\frac{1}{n}\sum_{i=1}^n \left[\Prob(i\notin \hat S)\phi_i^*(-\alpha_i)+\Prob(i\in \hat S)\phi_i^*(-\alpha_i-h_i)\right]\\
&=\frac{1}{n}\sum_{i=1}^n\left[(1-p_i)\phi_i^*(-\alpha_i)+p_i\phi_i^*(-\alpha_i-h_i)\right].
\end{align*}
\end{proof}

Our last auxiliary result is a technical lemma for further bounding the right hand side in Lemma~\ref{l-Dalphff}.

\begin{lemma}\label{le-Palphat}
Suppose that $\hat S$ and $v\in\R^n$ satisfy Assumption~\ref{ass-ESO}. Fixing  $\alpha \in \R^{N}$ and $w\in \R^d$, let $h\in \R^{N}$ be defined by:
$$
h_i=-\theta p_i^{-1}(\alpha_i+\nabla \phi_i(A_i^\top w)),\enspace i\in[n],
$$
where $\theta$ be as in~\eqref{a-theta}. Then
\begin{equation}\begin{split}
&f(\alpha)+\<\nabla f(\alpha),h>_p+\frac{1}{2\lambda n^2}\|h\|_{p\cdot v}^2+\frac{1}{n}\sum_{i=1}^n\left[(1-p_i)\phi_i^*(-\alpha_i)+p_i\phi_i^*(-\alpha_i-h_i)\right]\\&\leq -(1-\theta)D(\alpha)-\theta\lambda g(\nabla g^*(\bar \alpha))
-\frac{1}{n}\sum_{i=1}^n\<\theta\nabla g^*(\bar \alpha), A_i\nabla \phi_i(A_i^\top w)>
+\frac{\theta}{n}\sum_{i=1}^n \phi_i^*(\nabla \phi_i(A_i^\top w))
,\end{split}\label{eq-Phalpht1}
\end{equation}
where $\bar \alpha =\frac{1}{\lambda n} A \alpha$.
\end{lemma}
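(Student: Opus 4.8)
The plan is to compute the left-hand side of~\eqref{eq-Phalpht1} term by term, using the explicit form of $h$, and to repeatedly invoke the defining identity $f(\alpha) = \lambda g^*(\bar\alpha)$ together with the duality relation $g(\nabla g^*(\bar\alpha)) + g^*(\bar\alpha) = \ve{\nabla g^*(\bar\alpha)}{\bar\alpha}$ (Fenchel--Young at optimality) and the strong convexity of $\phi_i^*$. First I would observe that, since $\nabla f(\alpha) = (\tfrac1n A_i^\top \nabla g^*(\bar\alpha))_{i=1}^n$, the inner product term becomes $\ve{\nabla f(\alpha)}{h}_p = \tfrac1n\sum_i p_i \ve{A_i^\top\nabla g^*(\bar\alpha)}{h_i}$, and plugging in $h_i = -\theta p_i^{-1}(\alpha_i + \nabla\phi_i(A_i^\top w))$ makes the $p_i$ cancel, leaving $-\tfrac{\theta}{n}\sum_i \ve{A_i^\top\nabla g^*(\bar\alpha)}{\alpha_i + \nabla\phi_i(A_i^\top w)}$. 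The piece involving $\alpha_i$ here sums to $-\theta\ve{\nabla g^*(\bar\alpha)}{\bar\alpha}$ after recognizing $\tfrac1{\lambda n}\sum_i A_i\alpha_i = \bar\alpha$ (up to the $\lambda$ factor), and the piece involving $\nabla\phi_i$ is exactly the third term on the right-hand side of~\eqref{eq-Phalpht1}.

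Next I would handle the quadratic term and the conjugate terms together. The key move is to bound $\phi_i^*(-\alpha_i - h_i)$ using $\gamma$-strong convexity of $\phi_i^*$ around the point $-\alpha_i$: writing $-\alpha_i - h_i = (1-\tfrac{\theta}{p_i})(-\alpha_i) + \tfrac{\theta}{p_i}(\nabla\phi_i(A_i^\top w))$ exhibits $-\alpha_i - h_i$ as a convex combination (this is why $\theta \le p_i$ is needed, which holds by the definition~\eqref{a-theta}), so by strong convexity
\[
\phi_i^*(-\alpha_i - h_i) \le \left(1-\tfrac{\theta}{p_i}\right)\phi_i^*(-\alpha_i) + \tfrac{\theta}{p_i}\phi_i^*(\nabla\phi_i(A_i^\top w)) - \tfrac{\gamma}{2}\tfrac{\theta}{p_i}\left(1-\tfrac{\theta}{p_i}\right)\|\alpha_i + \nabla\phi_i(A_i^\top w)\|^2.
\]
Multiplying by $p_i/n$ and summing, the $(1-p_i)\phi_i^*(-\alpha_i)$ contributions combine with the $\phi_i^*(-\alpha_i)$-terms to reconstruct $\psi(\alpha)$, the $\phi_i^*(\nabla\phi_i(\cdot))$-terms give the last term of~\eqref{eq-Phalpht1} (with coefficient $\theta/n$), and we are left with a negative quadratic $-\tfrac{\gamma\theta}{2n}\sum_i(1-\tfrac{\theta}{p_i})\|\alpha_i+\nabla\phi_i(A_i^\top w)\|^2$. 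Meanwhile the ESO quadratic term is $\tfrac{1}{2\lambda n^2}\|h\|_{p\cdot v}^2 = \tfrac{\theta^2}{2\lambda n^2}\sum_i \tfrac{v_i}{p_i}\|\alpha_i+\nabla\phi_i(A_i^\top w)\|^2$. I would then show these two quadratics cancel (or the ESO one is dominated): the required inequality $\tfrac{\theta^2 v_i}{\lambda n^2 p_i} \le \tfrac{\gamma\theta}{n}(1-\tfrac{\theta}{p_i})$ simplifies, after dividing by $\theta/n$ and rearranging, to $\theta \le \tfrac{p_i\lambda\gamma n}{v_i + \lambda\gamma n}$, which is precisely the definition of $\theta$ in~\eqref{a-theta}. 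This is the crux of the whole lemma and the place where the stepsize choice is forced.

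Finally I would assemble the pieces: collecting $f(\alpha) = \lambda g^*(\bar\alpha)$, the reconstructed $\psi(\alpha)$, and the $-\theta\lambda\ve{\nabla g^*(\bar\alpha)}{\bar\alpha}$ term, I use $-D(\alpha) = f(\alpha) + \psi(\alpha)$ to write the sum as $-(1-\theta)D(\alpha) - \theta D(\alpha) + \lambda g^*(\bar\alpha) \cdot(\text{coefficient bookkeeping}) - \theta\lambda\ve{\nabla g^*(\bar\alpha)}{\bar\alpha}$, and then the Fenchel--Young identity $-\lambda g^*(\bar\alpha) - \lambda\ve{\nabla g^*(\bar\alpha)}{\bar\alpha}$\ldots more carefully, $\lambda(g^*(\bar\alpha) - \ve{\nabla g^*(\bar\alpha)}{\bar\alpha}) = -\lambda g(\nabla g^*(\bar\alpha))$ converts the leftover $g^*$ and inner-product terms into $-\theta\lambda g(\nabla g^*(\bar\alpha))$, giving exactly the right-hand side of~\eqref{eq-Phalpht1}. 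The main obstacle is purely organizational: keeping the bookkeeping of the $\theta$, $1-\theta$, $p_i$ and $1/n$ coefficients straight across the four groups of terms, and making sure the sign of the residual quadratic is correct so that the ESO term can be absorbed; the one genuinely substantive step is the strong-convexity bound on $\phi_i^*(-\alpha_i-h_i)$, which is what produces the negative quadratic needed to kill the ESO term under the stated choice of $\theta$.
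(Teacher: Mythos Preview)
Your proposal is correct and follows essentially the same route as the paper's proof: split the inner-product term via $\nabla f(\alpha)=\tfrac1n A^\top\nabla g^*(\bar\alpha)$, bound $\phi_i^*(-\alpha_i-h_i)$ by $\gamma$-strong convexity of $\phi_i^*$ applied to the convex combination $(1-\theta p_i^{-1})(-\alpha_i)+\theta p_i^{-1}\nabla\phi_i(A_i^\top w)$, and absorb the ESO quadratic into the resulting negative quadratic using exactly the inequality $\theta\le p_i\lambda\gamma n/(v_i+\lambda\gamma n)$. The only cosmetic difference is that the paper carries the residual quadratic in terms of $\|h_i\|^2$ rather than $\|\alpha_i+\nabla\phi_i(A_i^\top w)\|^2$, and leaves the Fenchel--Young identity $g^*(\bar\alpha)-\ve{\nabla g^*(\bar\alpha)}{\bar\alpha}=-g(\nabla g^*(\bar\alpha))$ implicit where you state it explicitly.
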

\begin{proof}
Recall from~\eqref{a-defoff} that $f(\alpha)=\lambda g^*(\bar \alpha)$ and hence $\nabla f(\alpha)=\frac{1}{n} A^\top \nabla g^*(\bar \alpha)$.
Thus,
\begin{equation}\label{eq-in2}
 \begin{split}
 & f(\alpha)+\<\nabla f(\alpha),h>_p+\frac{1}{2\lambda n^2}\|h\|_{p\cdot v}^2
\\&=\lambda g^*(\bar \alpha)-\sum_{i=1}^n p_i\<\frac{1}{n}A_i^\top \nabla g^*(\bar\alpha),\theta p_i^{-1}(\alpha_i+\nabla \phi_i(A_i^\top w))>+
\frac{1}{2\lambda n^2} \|h\|^2_{p\cdot v}
\\&=(1-\theta)\lambda g^*(\bar \alpha)+\theta \lambda (g^*(\bar \alpha)-\<\nabla g^*(\bar \alpha),\bar \alpha>)
\\& \qquad-\frac{1}{n} \sum_{i=1}^n\<\theta\nabla g^*(\bar \alpha),  A_i\nabla \phi_i(A_i^\top w)>+\frac{1}{2\lambda n^2} \|h\|^2_{p\cdot v}.
\end{split}
\end{equation}
Since the functions $\phi_i$ are $(1/\gamma)$-smooth, the conjugate functions $\phi_i^*$ must be $\gamma$-strongly convex. Therefore,
\begin{align}
&\phi_{i}^*(-\alpha_{i}-h_i)
\notag
\\&=\phi_{i}^*(-(1-\theta p_i^{-1})\alpha_{i}+ \theta p_i^{-1} \nabla \phi_{i}(A_{i}^\top w))
\notag \\&\leq (1-\theta p_i^{-1})\phi_{i}^*(-\alpha_{i})+ \theta p_i^{-1}\phi_{i}^*(\nabla \phi_{i}(A_{i}^\top w))-\frac{\gamma \theta p_i^{-1}(1- \theta p_i^{-1})}{2}\| \alpha_{i} + \nabla \phi_{i}(A_{i}^\top w)\|^2
\notag \\&= (1-\theta p_i^{-1})\phi_{i}^*(-\alpha_{i})+ \theta p_i^{-1}\phi_{i}^*(\nabla \phi_{i}(A_{i}^\top w))-\frac{\gamma p_i(1- \theta p_i^{-1})}{2\theta}\| h_i\|^2,\label{a-phiital}
\end{align}
and we can write
\begin{equation}\label{eq-in1}
 \begin{split}
&\frac{1}{n}\sum_{i=1}^n[ (1-p_i)\phi_i^*(-\alpha_i)+p_i \phi_i^*(-\alpha_i-h_i)]\\
&\overset{\eqref{a-phiital}}{\leq}(1-\theta)\psi(\alpha)+\frac{\theta}{n}\sum_{i=1}^n (\phi_i^*(\nabla \phi_i(A_i^\top w)))-\frac{1}{2\lambda n^2}\sum_{i=1}^n \frac{n\lambda\gamma p_i^2(1- \theta p_i^{-1})}{\theta}\| h_i\|^2.
\end{split}
\end{equation}
Then by combining~\eqref{eq-in2} and~\eqref{eq-in1} we get:
\begin{equation*}
\begin{split}
& f(\alpha)+\<\nabla f(\alpha),h>_p+\frac{1}{2\lambda n^2}\|h\|_{p\cdot v}^2+\frac{1}{n}\sum_{i=1}^n\left[(1-p_i)\phi_i^*(-\alpha_i)+p_i\phi_i^*(-\alpha_i-h_i)\right] \\&\leq -(1-\theta)D(\alpha)-\theta\lambda g(\nabla g^*(\bar \alpha))
-\frac{1}{n}\sum_{i=1}^n\<\theta\nabla g^*(\bar \alpha), A_i\nabla \phi_i(A_i^\top w)>
+\frac{\theta}{n}\sum_{i=1}^n \phi_i^*(\nabla \phi_i(A_i^\top w))\\
&\quad +\frac{1}{2\lambda n^2}\sum_{i=1}^n \left(p_i v_i -\frac{n\lambda\gamma p_i^2(1- \theta p_i^{-1})}{\theta}\right)\| h_i\|^2.
\end{split}
\end{equation*}
It remains to notice that for $\theta$ defined in~\eqref{a-theta}, we have:
$$
p_i v_i \leq \frac{n\lambda\gamma p_i^2(1- \theta p_i^{-1})}{\theta},\enspace \forall i \in[n].
$$
\end{proof}

\subsection{Proof of  Theorem~\ref{th-mainth}}

Let $t\geq 1$. Define $h^t=(h_1^t,\dots,h_n^t)\in \R^{N}$ by:
$$
h_i^t=-\theta p_i^{-1}(\alpha^{t-1}_i+\nabla \phi_i(A_i^\top w^t)),\enspace i\in[n]
$$
and $\kappa^t=(\kappa_1^t,\cdots,\kappa_n^t)$ by:
$$
\kappa_i^t=\arg\max_{\Delta\in \R^m}
\left[-\phi_i^*(-(\alpha_i^{t-1}+\Delta ))-\nabla g^*(\bar\alpha^{t-1})^\top A_i \Delta-\frac{v_i \|\Delta\|  ^2}{2\lambda n}\right],\enspace \forall i\in [n].
$$
If we use Option \rom{1} in Algorithm~\ref{algo:primaldual}, then $
\alpha^t=\alpha^{t-1}+\kappa^t_{[\hat S]}$. If we use Option \rom{2} in Algorithm~\ref{algo:primaldual},
then we have $\alpha^t=\alpha^{t-1}+h^t_{[\hat S]}$. In both cases,
by Lemma~\ref{l-Dalphff}:
\begin{equation*}
\begin{split}
&\Exp_t[-D(\alpha^t)]\\&\leq f(\alpha^{t-1})+\<\nabla f(\alpha^{t-1}),h^t>_p+\frac{1}{2\lambda n^2}\|h^t\|_{p\cdot v}^2+\frac{1}{n}\sum_{i=1}^n\left[(1-p_i)\phi_i^*(-\alpha^{t-1}_i)+p_i\phi_i^*(-\alpha^{t-1}_i-h^t_i)\right].\end{split}
\end{equation*}
We now apply Lemma~\ref{le-Palphat} to further bound the last term and obtain:
\begin{equation}\label{eq-Dualbound}
\begin{split}
\Exp_t[-D(\alpha^t)] & \leq -(1-\theta)D(\alpha^{t-1})-\theta\lambda g(\nabla g^*(\bar \alpha^{t-1})) \\& \quad
-\frac{1}{n}\sum_{i=1}^n\<\theta\nabla g^*(\bar \alpha^{t-1}), A_i\nabla \phi_i(A_i^\top w^t)>
+\frac{\theta}{n}\sum_{i=1}^n \phi_i^*(\nabla \phi_i(A_i^\top w^t)).\end{split}
\end{equation}
By  convexity of $g$,
\begin{equation}\label{eq-pbound}\begin{split}
P(w^t)&=\frac{1}{n}\sum_{i=1}^n \phi_i(A_i^\top w^t) + \lambda g((1-\theta)w^{t-1}+\theta \nabla g^*(\bar \alpha^{t-1}))
\\& \leq \frac{1}{n}\sum_{i=1}^n \phi_i(A_i^\top w^t) +(1-\theta)\lambda g(w^{t-1})+\theta \lambda g(\nabla g^*(\bar \alpha^{t-1})).
\end{split}
\end{equation}
By combining~\eqref{eq-Dualbound} and~\eqref{eq-pbound} we get:
\begin{equation}\label{eq-erdf}\begin{split}
\Exp_t[P(w^t)-D(\alpha^t)] & \leq \frac{1}{n}\sum_{i=1}^n \phi_i(A_i^\top w^t) +(1-\theta)\lambda g(w^{t-1}) -(1-\theta)D(\alpha^{t-1})
\\& \qquad-\frac{1}{n} \sum_{i=1}^n\<\theta\nabla g^*(\bar \alpha^{t-1}), A_i\nabla \phi_i(A_i^\top w^t)>
+\frac{\theta}{n}\sum_{i=1}^n \phi_i^*(\nabla \phi_i(A_i^\top w^t))
\\&=(1-\theta)(P(w^{t-1})-D(\alpha^{t-1}))+\frac{1}{n}\sum_{i=1}^n (\phi_i(A_i^\top w^t)-(1-\theta)\phi_i(A_i^\top w^{t-1}))\\ &\quad 
-\frac{1}{n}\sum_{i=1}^n \<\theta \nabla g^*(\bar \alpha^{t-1}),  A_i \nabla \phi_i(A_i^\top w^t) >+\frac{\theta}{n} \sum_{i=1}^n \phi_{i}^*(\nabla \phi_{i}(A_{i}^\top w^t)).
\end{split}
\end{equation}
Note that $\theta \nabla g^*(\bar \alpha^{t-1})=w^t-(1-\theta)w^{t-1}$ and $ 
 \phi_{i}^*(\nabla \phi_{i}(A_{i}^\top w^t))=\<\nabla \phi_i(A_i^\top w^t), A_i^\top w^t >-\phi_i(A_i^\top w^t)$. Finally, we  plug these two inequalities into~\eqref{eq-erdf} and obtain:
\begin{equation*}
\begin{split}
\Exp_t[P(w^t)-D(\alpha^t)]
\leq& (1-\theta)(P(w^{t-1})-D(\alpha^{t-1})) +\frac{1}{n}\sum_{i=1}^n (\phi_i(A_i^\top w^t)-(1-\theta)\phi_i(A_i^\top w^{t-1}))\\ &\quad
-\frac{1}{n}\sum_{i=1}^n \<A_i^\top w^t-(1-\theta)A_i^\top w^{t-1},   \nabla \phi_i(A_i^\top w^t) >\\& \quad \quad+\frac{\theta}{n} \sum_{i=1}^n \big(\<\nabla \phi_i(A_i^\top w^t), A_i^\top w^t >-\phi_i(A_i^\top w^t)\big)
\\=& (1-\theta)(P(w^{t-1})-D(\alpha^{t-1})) +\frac{1-\theta}{n}\sum_{i=1}^n (\phi_i(A_i^\top w^t)-\phi_i(A_i^\top w^{t-1}))\\ &\quad
-\frac{1-\theta}{n}\sum_{i=1}^n \<A_i^\top w^t-A_i^\top w^{t-1},   \nabla \phi_i(A_i^\top w^t) >\\
=&(1-\theta)(P(w^{t-1})-D(\alpha^{t-1}))\\ &\quad+\frac{1-\theta}{n}\sum_{i=1}^n \big[\phi_i(A_i^\top w^t)-\phi_i(A_i^\top w^{t-1})+ \<A_i^\top w^{t-1}-A_i^\top w^{t},   \nabla \phi_i(A_i^\top w^t) > \big]
\\\leq & (1-\theta)(P(w^{t-1})-D(\alpha^{t-1})),
\end{split}
\end{equation*}
where the last inequality follows from the convexity of  $\phi_i$.

\section{Experimental Results} \label{sec:experiments}

In~\cite{SDCA} and~\cite{ASDCA}, the reader can find an extensive list of popular machine learning problems to which Prox-SDCA can be applied. Sharing the same primal-dual formulation, our algorithm can also be specified and applied to those applications, including Ridge regression, SVM, Lasso, 
logistic regression and multiclass prediction. 

We focus our numerical experiments on the L2-regularized linear SVM problem with smoothed hinge loss or squared hinge loss. These problems are described  in detail in Section~\ref{sec:apps}. The three main messages that we draw from the numerical experiments are:
\begin{itemize}
\item Importance sampling does improve the convergence for certain datasets;
 \item Quartz specialized to serial samplings is comparable to Prox-SDCA in practice;
\item The theoretical speedup factor is an almost exact predictor of the actual speedup  (in terms
of iteration complexity).
\end{itemize}
We performed the experiments on several real world large datasets, of various dimensions $n$, $d$ and sparsity. The details of the dataset characteristics are provided in Table~\ref{tab-num}. In all our experiments we used Option \rom{1}, which we found to be better in practice.

\begin{table}[ht]
\centering
 \begin{tabular}{|c|c|c|c|}
\hline
  Dataset & \# Training size $n$ & \# features $d$ & Sparsity (\# nnz$/(nd)$)\\
\hline 
astro-ph & 29,882 & 99,757 & 0.08\% \\
\hline 
CCAT & 781,265 & 47,236 & 0.16\% \\
\hline
cov1 & 522,911 & 54 & 22.22\%\\
\hline 
w8a & 49,749 & 300 &  3.91\% \\
\hline ijcnn1 & 49,990 & 22 & 59.09\% \\
\hline 
webspam & 350,000 & 254  & 33.52\% \\\hline
 \end{tabular}
\caption{Datasets used in our experiments.}
\label{tab-num}
\end{table}

\subsection{Applications} \label{sec:apps}

\paragraph{Smooth hinge loss with $L_2$ regularizer.} We specify Quartz to the linear  Support Vector Machine (SVM) problem with smoothed hinge loss and $L_2$ regularizer:
\begin{equation*}
\min_{w\in \R^d}\;\; P(w)\eqdef\frac{1}{n}\sum_{i=1}^n \phi_i(y_iA_i^\top w) + \lambda g(w),
\end{equation*}
where 
\begin{equation}\label{d-1}
 \phi_i(a)=\left\{\begin{array}
{ll} 0 & a\geq 1\\ 1-a-\gamma/2 & a\leq 1-\gamma \\ \displaystyle\frac{(1-a)^2}{2\gamma} & \mathrm{otherwise.}
\end{array}\right. ,\enspace \forall a\in \R
\end{equation}
and 
\begin{align}\label{g-app-1}
g(w)=\frac{1}{2}\|w\|^2,\qquad w\in \R^d.
\end{align}
Here  $y_i\in\{\pm 1\}$ is the label of the example $A_i\in\R^d$. One can get rid of the labels by redefining
$A_i$ to $y_iA_i$.
Note that $\phi_i$ defined by~\eqref{d-1} is  $1/\gamma$-smooth and $g$ defined by~\eqref{g-app-1} is 1-strongly convex.
 In this special case,
Option \rom{1} in Algorithm~\ref{algo:primaldual} has a closed form solution: 
$$
\Delta \alpha_i^t=\max\left\{  -\alpha_i^{t-1},\min\left\{ 1-\alpha_i^{t-1},\frac{1-y_iA_i^\top \bar \alpha^{t-1}-\gamma \alpha_i^{t-1}}{v_i/(\lambda n)+\gamma} \right\}\right\}.
$$
See also~\cite[Section 5.6]{ASDCA}.

\paragraph{Squared hinge loss with $L_2$ regularizer.} We now specify Quartz to the linear Support Vector Machine (SVM) problem with squared hinge loss based and $L_2$ regularizer:
\begin{equation*}
\min_{w\in \R^d}\;\; P(w)\eqdef\frac{1}{n}\sum_{i=1}^n \phi_i(y_iA_i^\top w) + \lambda g(w),
\end{equation*}
where 
\begin{equation}\label{d-2}
 \phi_i(a)=\frac{([1-a]_+)^2}{2\gamma} ,\qquad  a\in \R
\end{equation}
and 
\begin{align}\label{g-app-2}
g(w)=\frac{1}{2}\|w\|^2,\qquad  w\in \R^d.
\end{align}
Note that $\phi_i$ defined by~\eqref{d-2} is  $1/\gamma$-smooth 
and $g$ defined by~\eqref{g-app-2} is 1-strongly convex. In this special case,
Option \rom{1} in Algorithm~\ref{algo:primaldual} has a closed form solution:
$$
\Delta \alpha_i^{t}=\max\left\{\frac{1-y_i A_i^\top \bar\alpha^{t-1}-\gamma \alpha_i^{t-1}}{\gamma+v_i/(\lambda n)},-\alpha_i^{t-1}\right\}.
$$ See also~\cite[Section 5.3]{IProx-SDCA}.

\begin{figure}[ht]
\centering
\subfigure[cov1; $n=522,911$; $\lambda=1$e-06]{
     \includegraphics[width=0.3\textwidth]{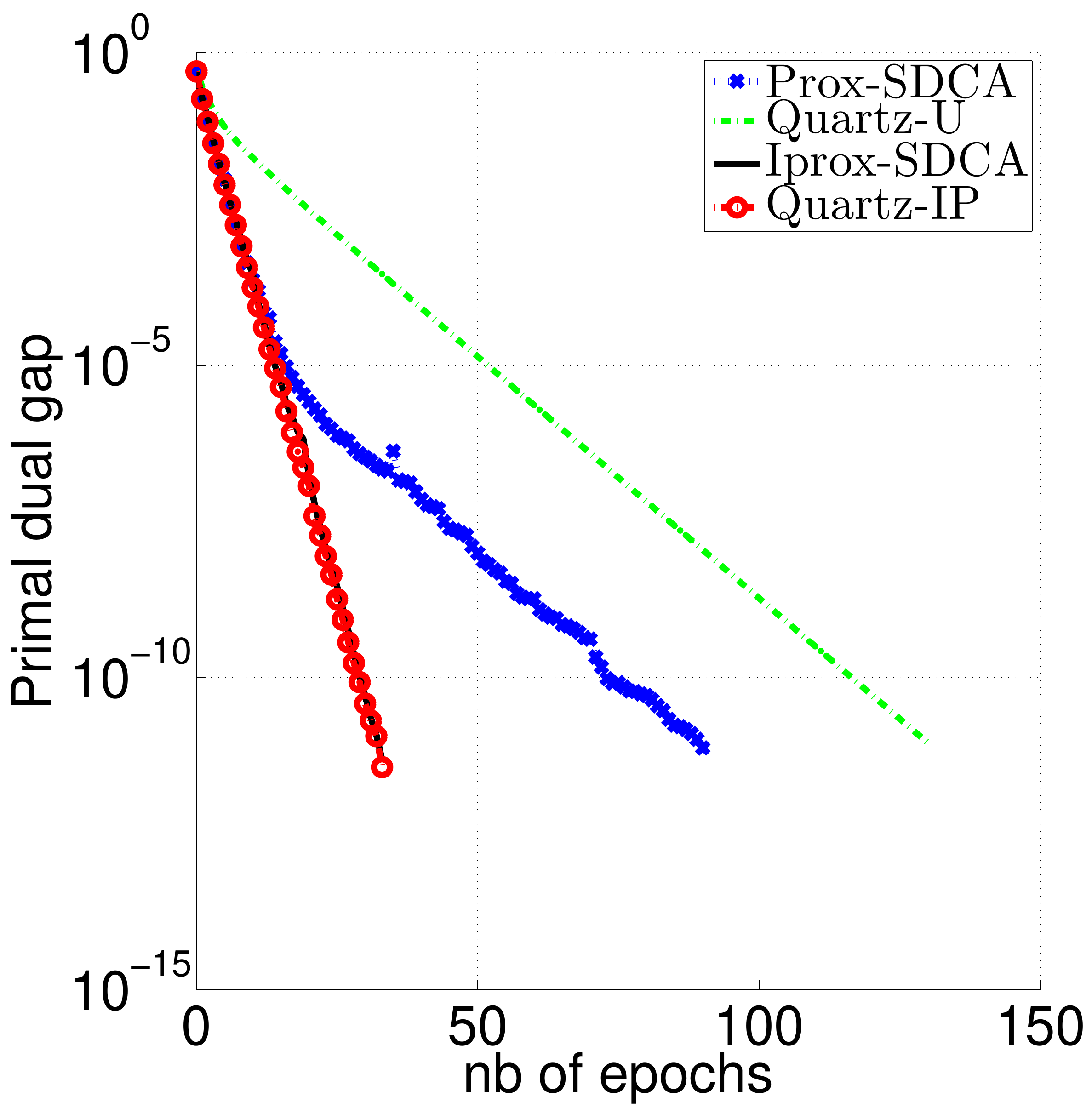}
    \label{figun:subfig1}
}
\subfigure[w8a; $n=49,749$; $\lambda=1$e-05]{
    \includegraphics[width=0.3\textwidth]{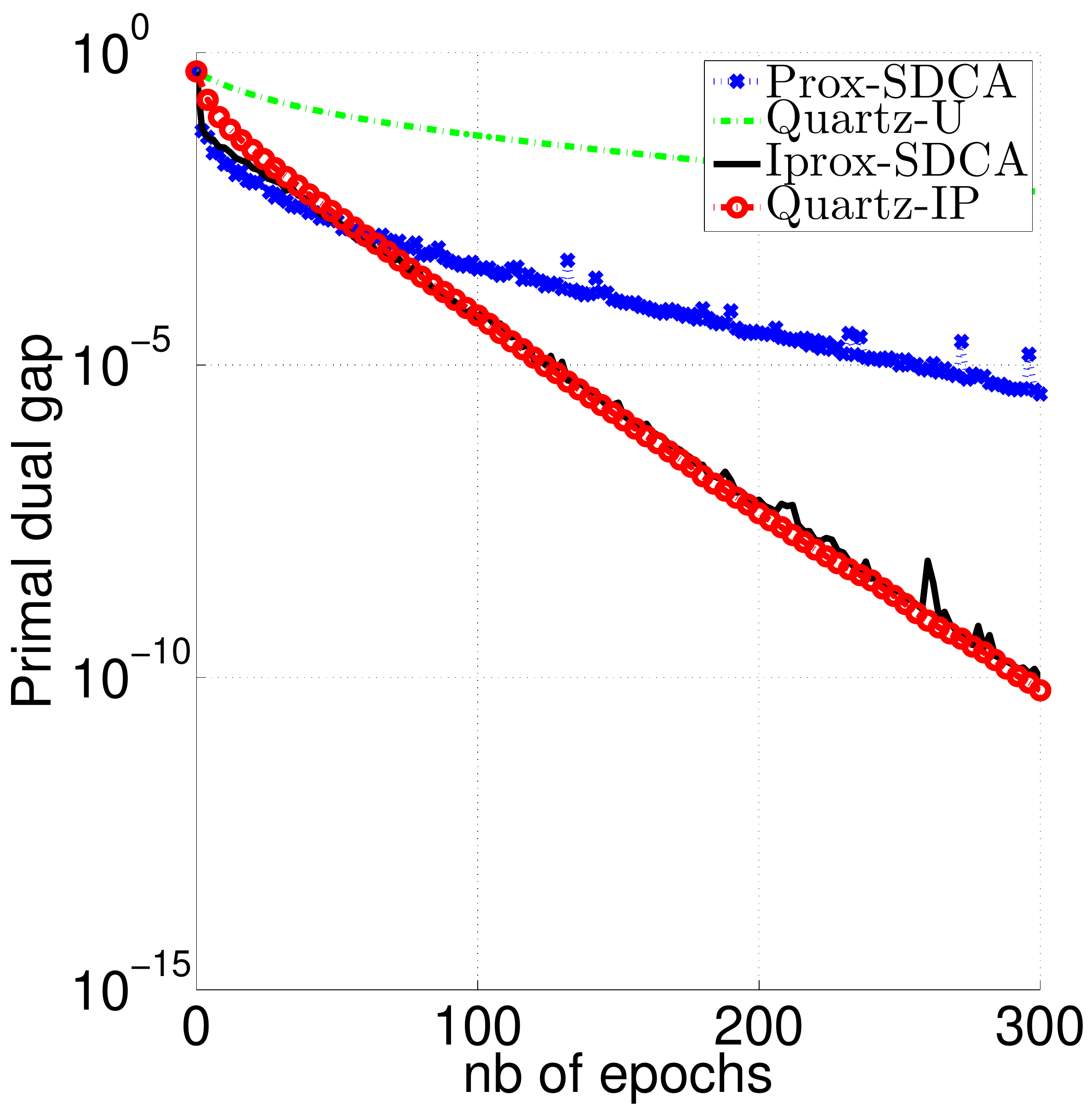}
    \label{figun:subfig2}
}
\subfigure[ijcnn1; $n=49,990$;  $\lambda=1$e-05]{
    \includegraphics[width=0.3\textwidth]{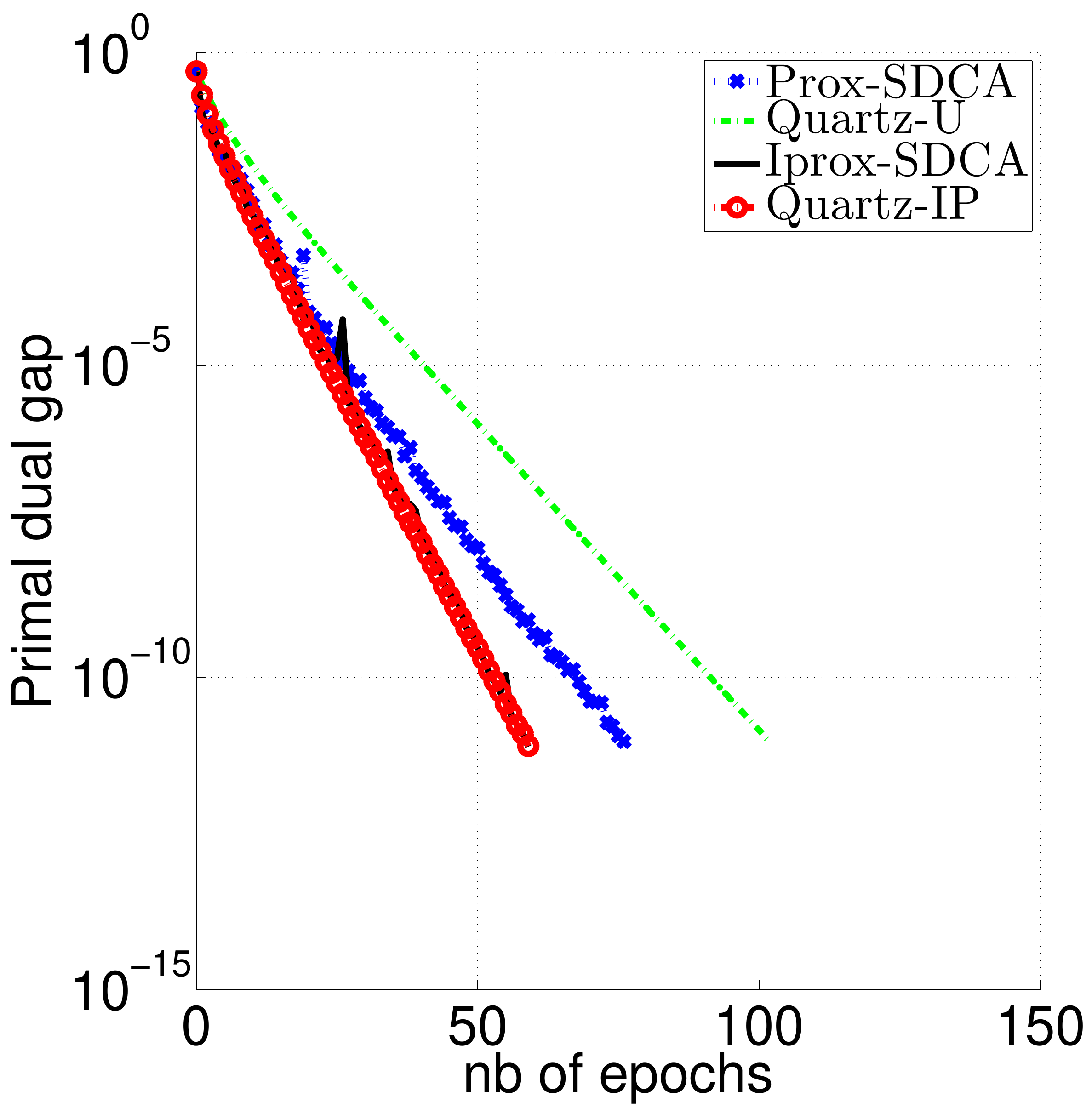}
    \label{figun:subfig3}
}
\subfigure[cov1; $n=522,911$; $\lambda=1$e-06]{
     \includegraphics[width=0.3\textwidth]{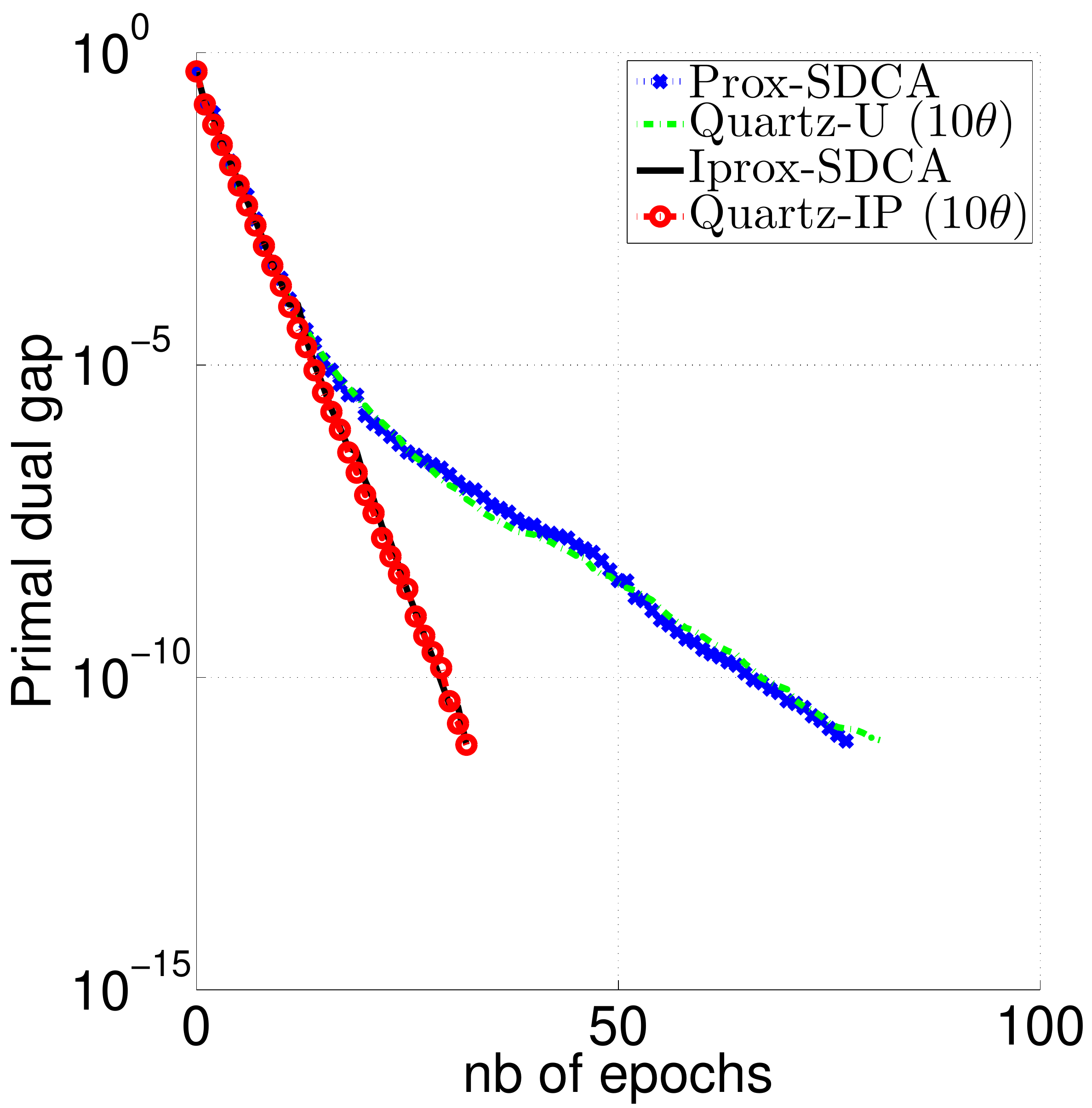}
    \label{figun:subfig4}
}
\subfigure[w8a; $n=49,749$; $\lambda=1$e-05]{
    \includegraphics[width=0.3\textwidth]{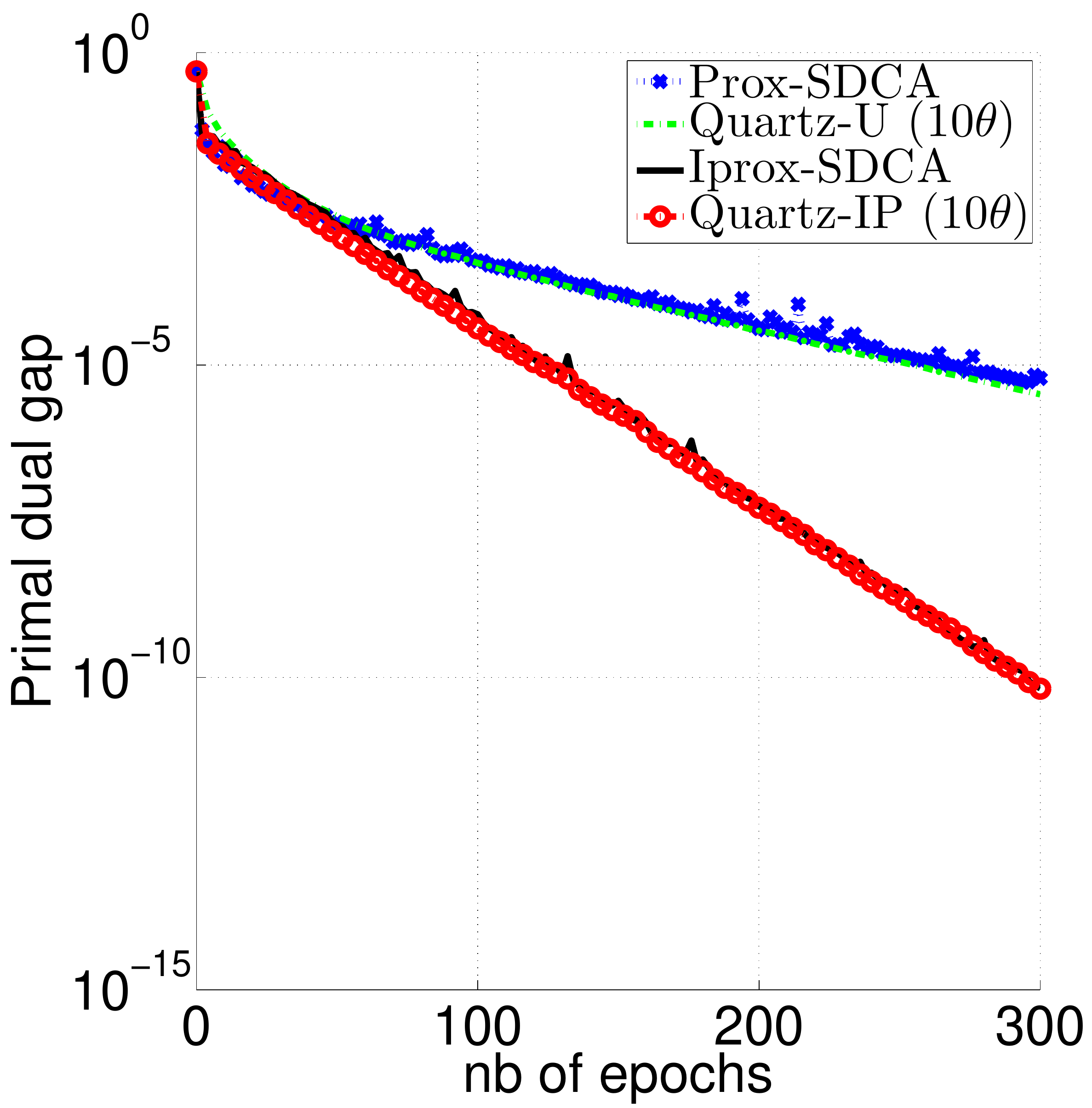}
    \label{figun:subfig5}
}
\subfigure[ijcnn1; $n=49,990$; $\lambda=1$e-05]{
    \includegraphics[width=0.3\textwidth]{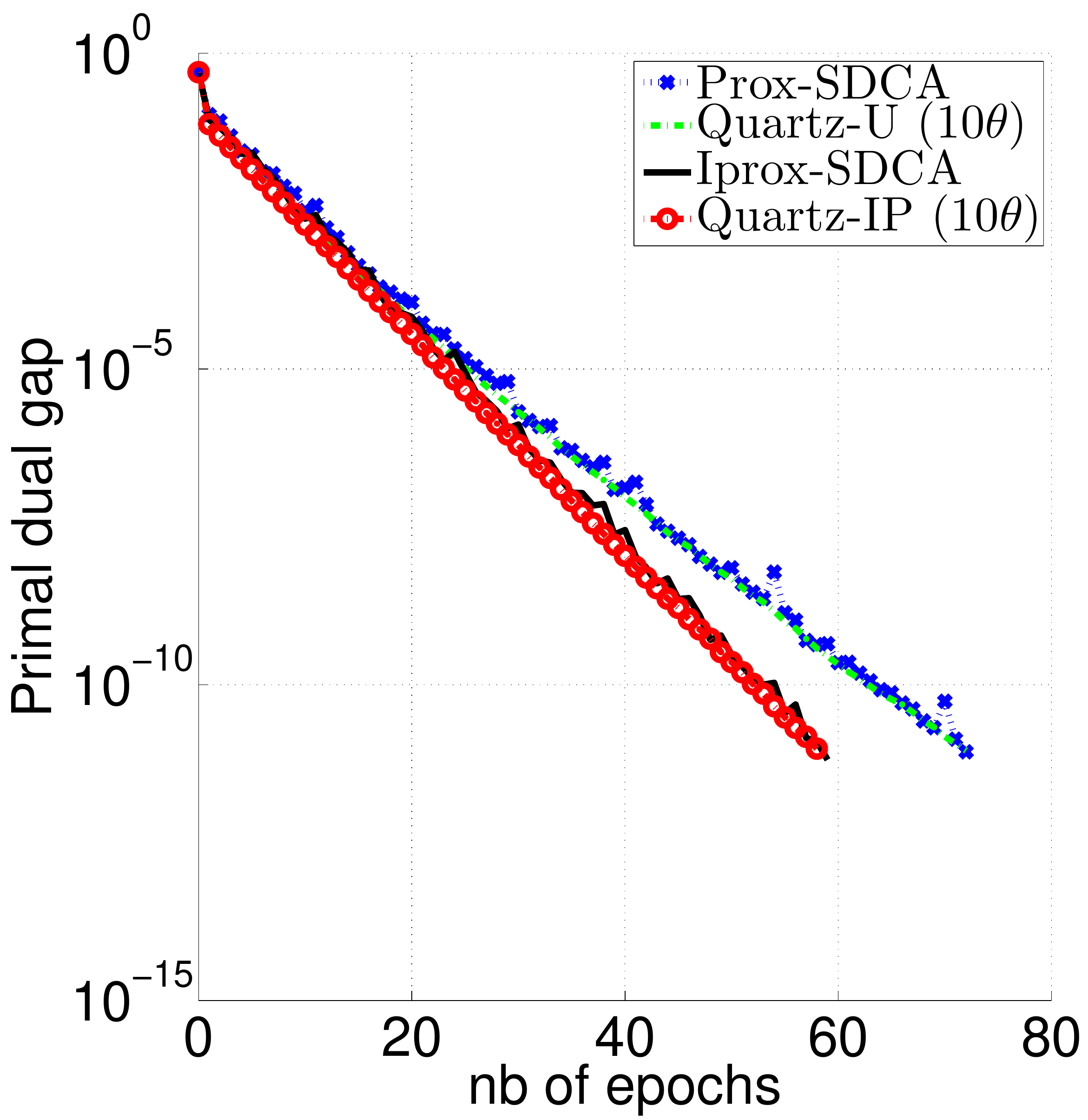}
    \label{figun:subfig6}
}
\subfigure[cov1; $n=522,911$; $\lambda=1$e-06]{
     \includegraphics[width=0.3\textwidth]{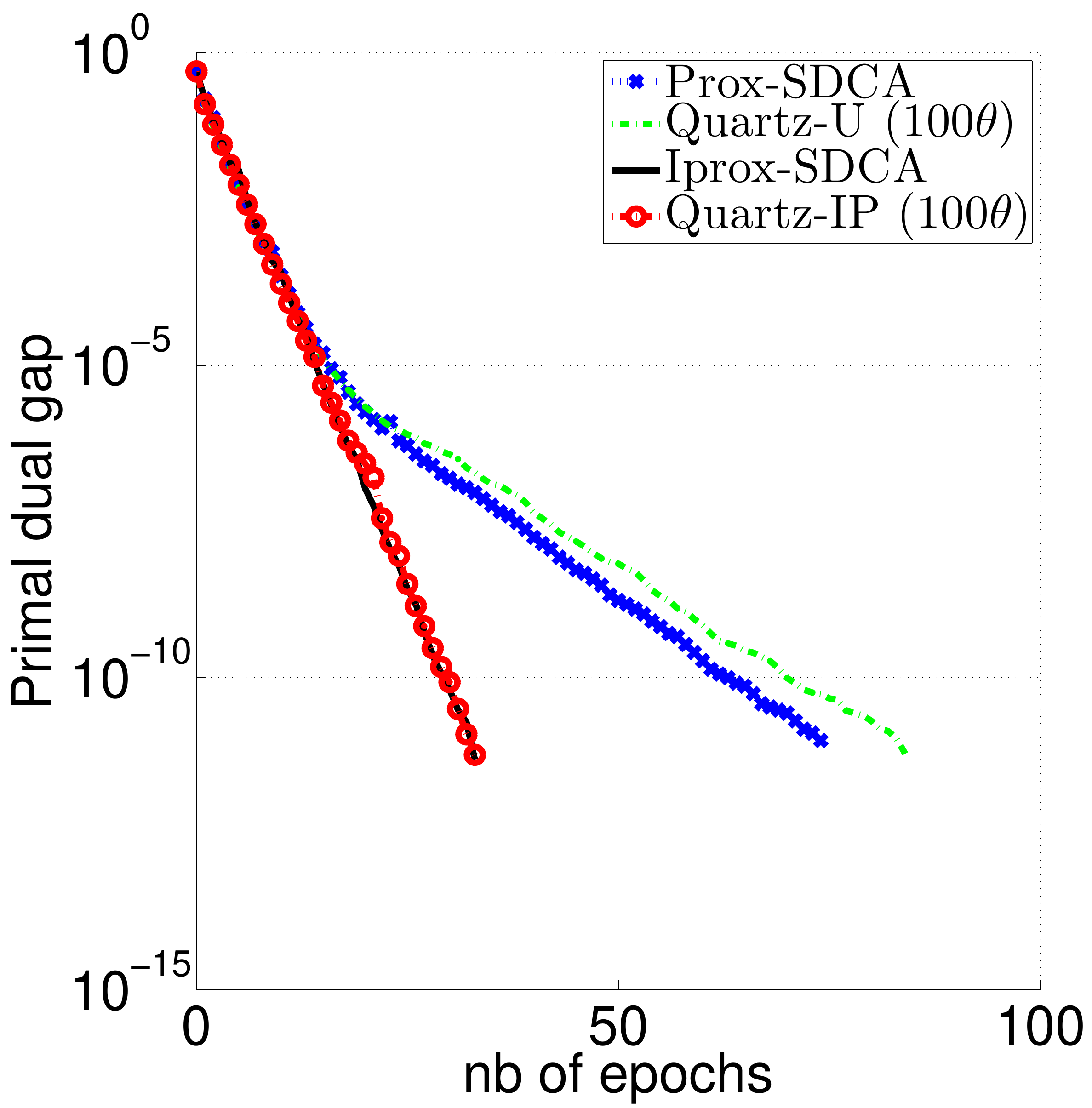}
    \label{figun:subfig7}
}
\subfigure[w8a; $n=49,749$; $\lambda=1$e-05]{
    \includegraphics[width=0.3\textwidth]{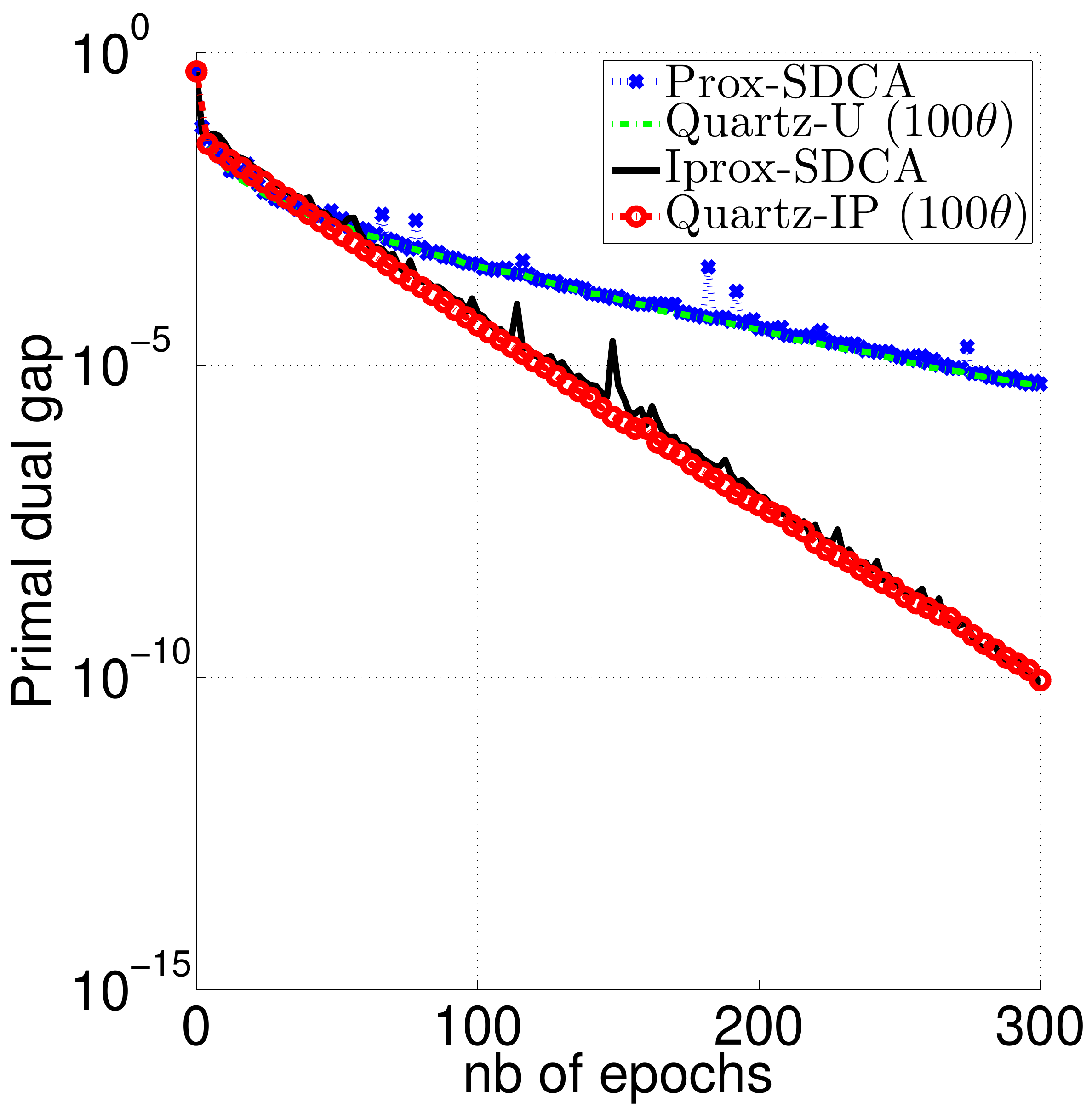}
    \label{figun:subfig8}
}
\subfigure[ijcnn1; $n=49,990$; $\lambda=1$e-05]{
    \includegraphics[width=0.3\textwidth]{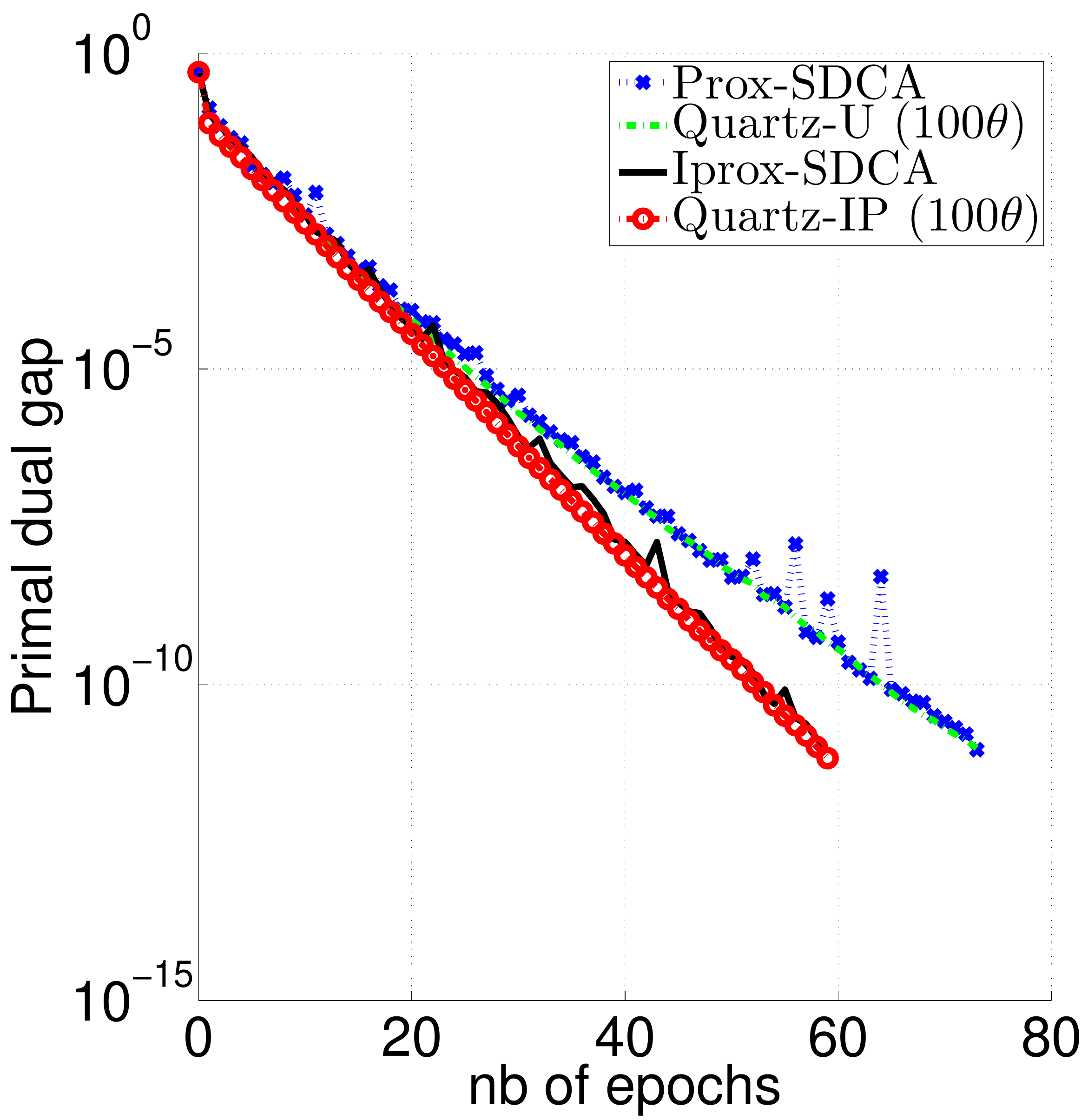}
    \label{figun:subfig9}
}
\caption[Optional caption for list of figures]{Comparison of Quartz-U (uniform sampling),
Quartz-IP (optimal importance sampling), Prox-SDCA (uniform sampling) and Iprox-SDCA (optimal importance sampling). In Figure~\ref{figun:subfig4},~\ref{figun:subfig5} and~\ref{figun:subfig6}, we used aggressive primal update: $w^{t}=(1-10\theta)w^{t-1}+10\theta \nabla g^*(\bar \alpha^{t-1})$. In Figure~\ref{figun:subfig7},~\ref{figun:subfig8} and~\ref{figun:subfig9}, we used aggressive primal update: $w^{t}=(1-100\theta)w^{t-1}+100\theta \nabla g^*(\bar \alpha^{t-1})$. The loss function is the squared hinge loss. The regularizer is the $L_2$-regularizer. }
\label{fig:ex1IU}
\end{figure}

\subsection{Quartz and SDCA for uniform and importance sampling}

In this section we compare four algorithms: 
\begin{itemize}
 \item Quartz-U: Quartz specialized to uniform serial sampling;
\item Prox-SDCA~\cite{SDCA,ASDCA}: proximal stochastic dual coordinate ascent with uniform sampling;
\item Quartz-IP: Quartz specialzed to importance sampling;
\item Iprox-SDCA~\cite{IProx-SDCA}: proximal stochastic dual coordinate ascent with importance sampling;
\end{itemize}
on three datasets: cov1, w8a and ijcnn1. We consider the $L2$-regularized linear SVM problem using  squared hinge loss,
as described in Section~\ref{sec:apps}. The value of $\gamma$ is set to be 1 and the value of 
$\lambda$ varies between the datasets: $10^{-5}$ for w8a and ijcnn1 and $10^{-6}$ for cov1, whose
number of training examples $n$ is 10 times larger than the other datasets. The results are shown in Figure~\ref{fig:ex1IU}.

\paragraph{Utility of importance sampling.} If we compare Quartz-U with Quartz-IP, it is clear that importance 
sampling provides better convergence rate than uniform sampling on the datasets that we tested.

\paragraph{Similarity between Quartz-IP and Iprox-SDCA.} In all the experiments, Quartz-IP shows an almost identical 
convergence behaviour to that of Iprox-SDCA. 

\paragraph{Conservative primal update in Quartz.} While Quartz-IP has the same practical convergence rate as
Iprox-SDCA, Quartz-U appears to be somewhat slower than Prox-SDCA in practice. One possible explanation is that the primal update in Quartz,
\begin{align}
w^t=(1-\theta)w^{t-1}+\theta \nabla g^*(\bar \alpha^{t-1}),
\end{align}
is too conservative. Indeed, since the optimal solution satisfies 
$w^*=\nabla g^*(\bar \alpha^*)$, larger $\theta$ leads to faster convergence on the primal problem 
when the dual variable $\alpha^{t-1}$ is close
to the optimal solution $\alpha^*$. To confirm this, we tested two more aggressive primal update rules:  Quartz-10$\theta$ and Quartz-100$\theta$ which change 
 the primal update to:
\begin{align*}
w^t=(1-10\theta)w^{t-1}+10\theta \nabla g^*(\bar \alpha^{t-1}),
\end{align*}
and 
\begin{align*}
w^t=(1-100\theta)w^{t-1}+100\theta \nabla g^*(\bar \alpha^{t-1}),
\end{align*}
respectively. The results are displayed in Figure~\ref{figun:subfig4},~\ref{figun:subfig5},~\ref{figun:subfig6},~\ref{figun:subfig7},~\ref{figun:subfig8} and~\ref{figun:subfig9}. It is clear that with just a slightly more aggressive primal update rule than the one sanctioned by our theory, Quartz-U achieves similar
practical convergence as Prox-SDCA. Recall that
the primal update in Prox-SDCA is
$w^t=\nabla g^*(\bar \alpha^{t-1})$. Notice also that the parameter $\theta$ defined by~\eqref{a-theta} is less than $1/n$, hence close to 0. 
Therefore, there is still a difference in the primal update rules between Quartz-100$\theta$ and Prox-SDCA.

\subsection{Mini-batch experiments}

In this section we demonstrate that the {\em theoretical speedup factor}  of Quartz specialized to sampling $\hat{S}$
is a very good predictor of the {\em practical speedup factor}, defined as: 
$$
\frac{\#\mathrm{~of~iterations~till~}\epsilon\mathrm{~primal~dual~gap~is~found~by~Quartz~specialized~to~serial~uniform~sampling} }
{\#\mathrm{~of~iterations~till~}\epsilon\mathrm{~primal~dual~gap~is~found~by~Quartz~specialized~to~}\hat S}.
$$

We focus on the problem of training $L2$-regularized linear SVMs with smoothed hinge loss $(\gamma=1)$, described in Section~\ref{sec:apps}. In the   experiments we chose  $\epsilon = 10^{-11}$. 

\begin{figure}[ht]
\centering
\subfigure[astro\_ph; sparsity: 0.08\%;]{
     \includegraphics[width=0.3\textwidth]{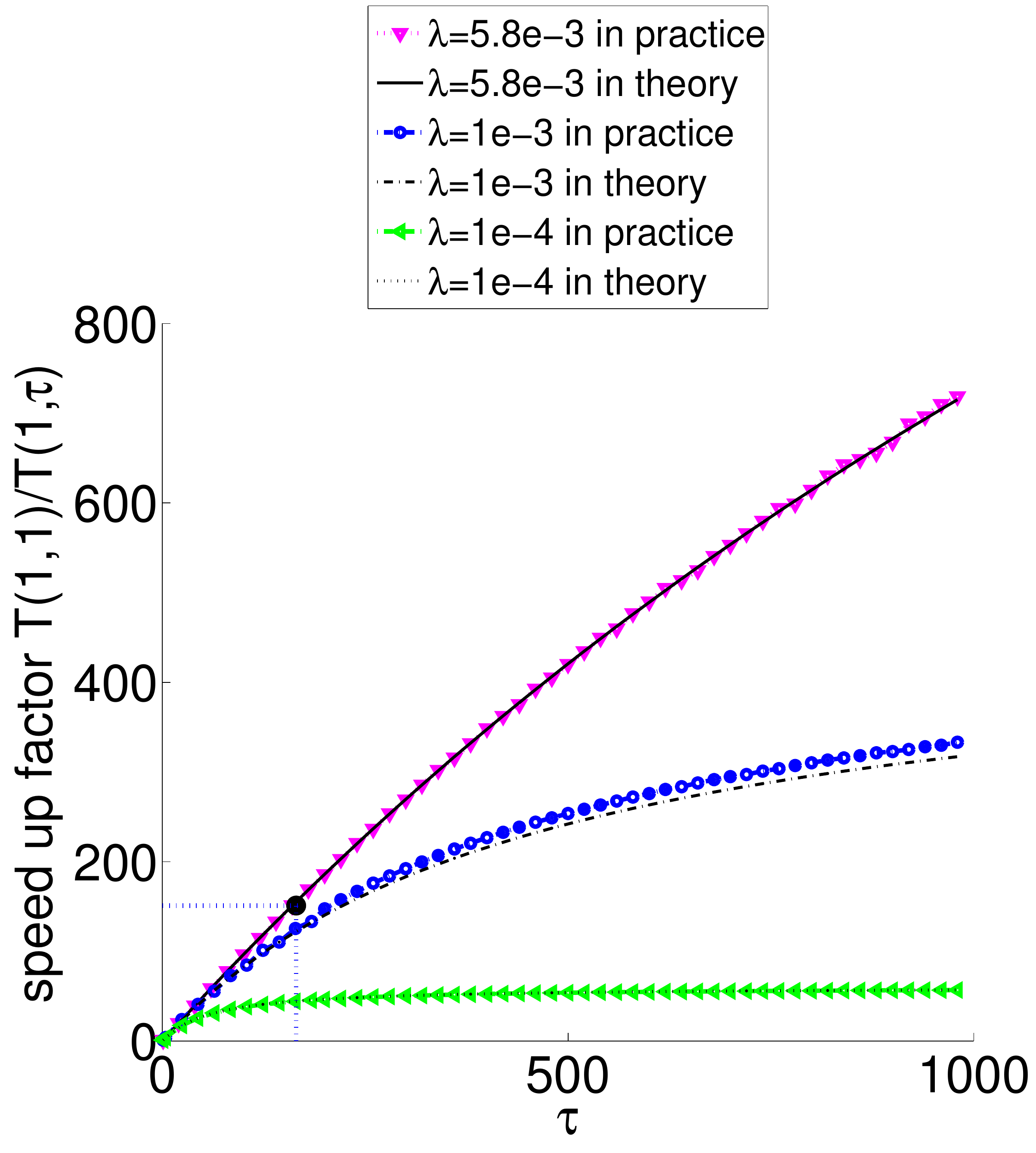}
    \label{figtau:subfig3}
}
\subfigure[CCAT; sparsity: 0.16\%;]{
    \includegraphics[width=0.3\textwidth]{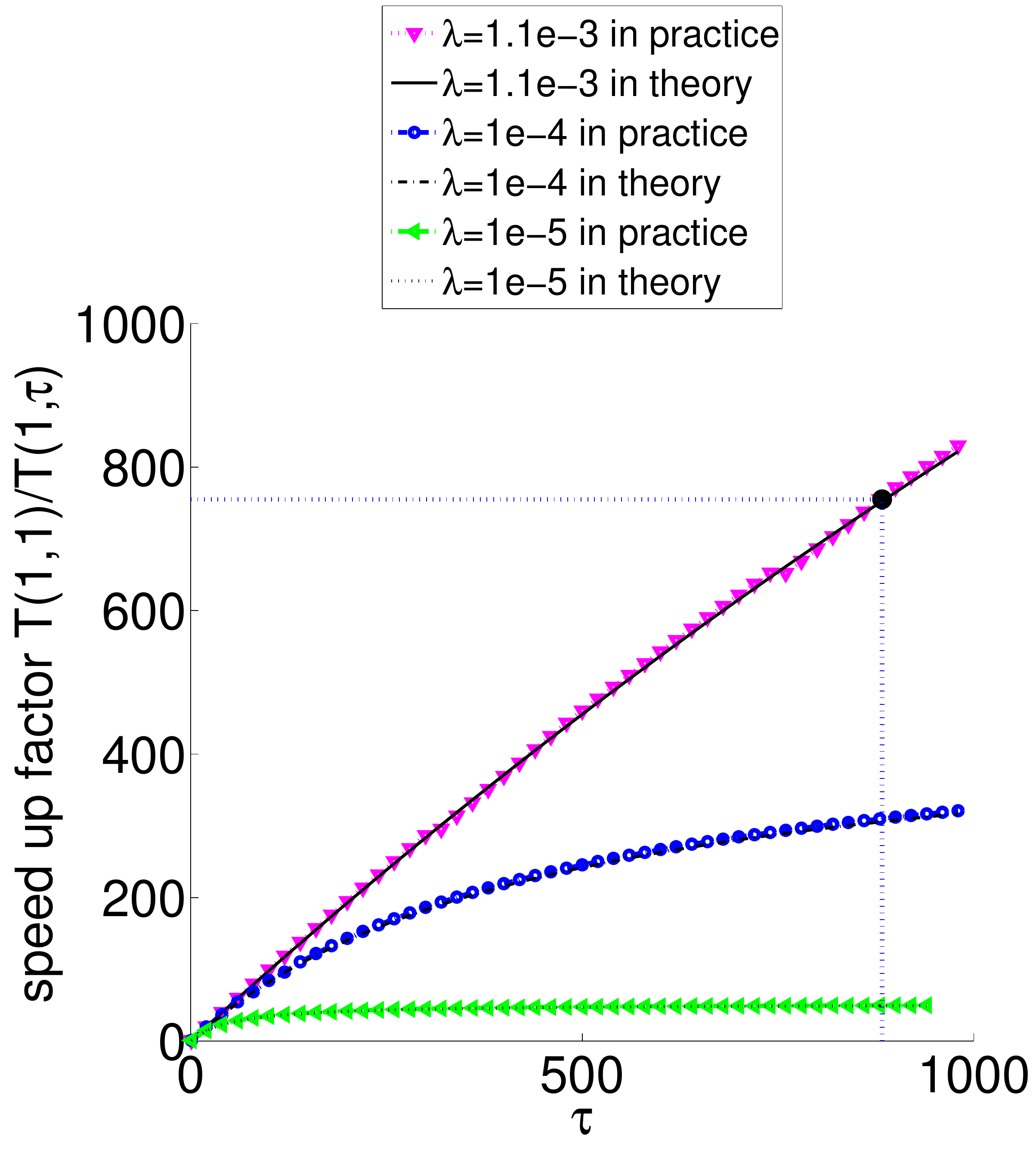}
    \label{figtau:subfig1}
}
\subfigure[cov1;  sparsity: 22.22\%;]{
    \includegraphics[width=0.3\textwidth]{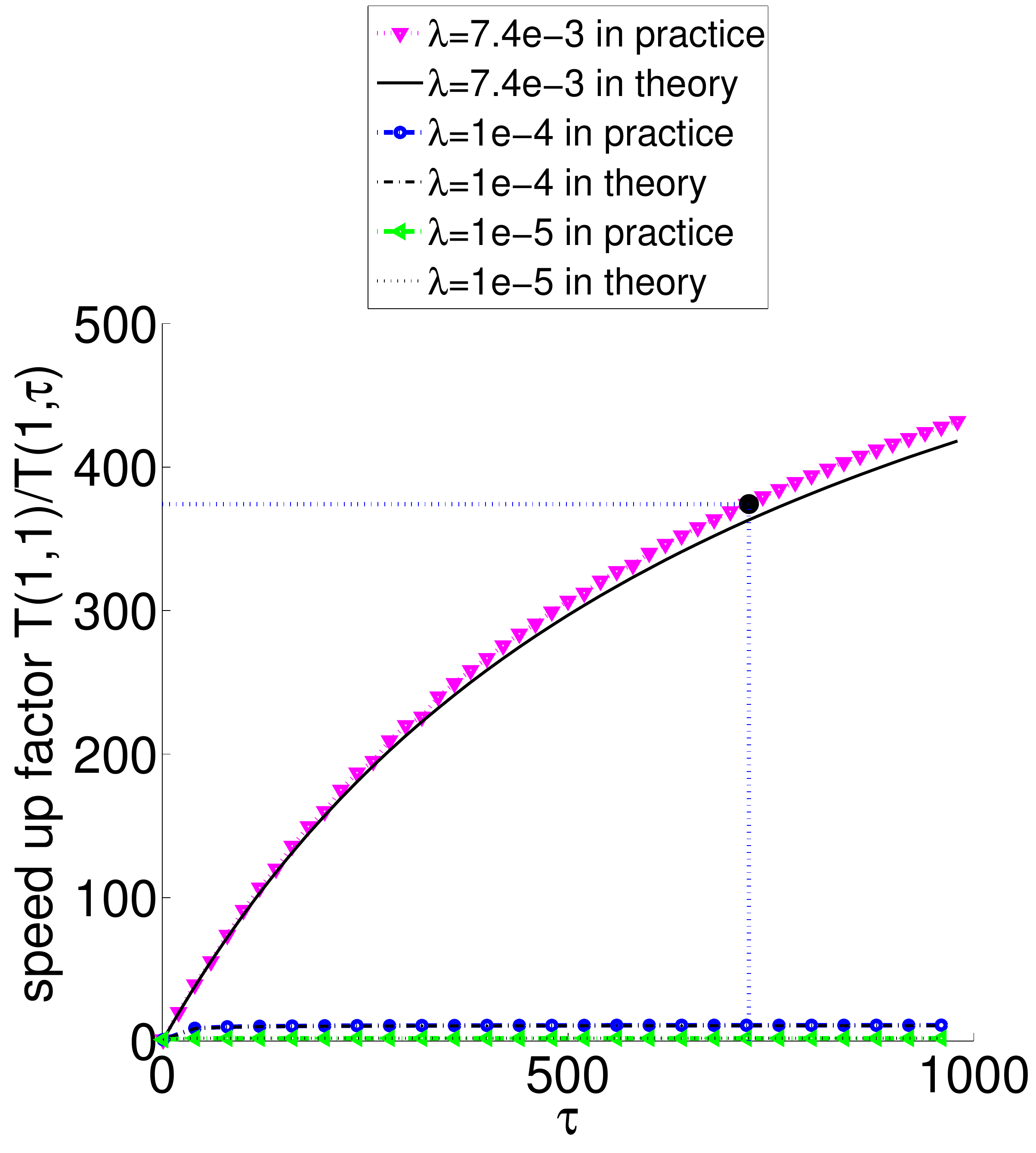}
    \label{figtau:subfig2}
}

\caption[Optional caption for list of figures]{Plots of theoretical and practical speedup factors as a function of $\tau$, for selected  values of $\lambda$. Problem: $L2$-regularized linear SVM with smoothed hinge loss and $\sigma=1$. Datasets: astro\_ph has $n=29,882$  training samples,
CCAT has $n=781,265$ training samples and cov1 has $n=522,911$ training samples. }
\label{fig:batch}
\end{figure}

In Figure~\ref{fig:batch} we  plot the speedup factors  for Quartz specialized to the $\tau$-nice sampling
on three different datasets: astro\_ph, CCAT and cov1, and for several  values of $\lambda$. We observe  that the practical speedup factor follows the theoretical prediction. Moreover, note that the largest $\lambda$ that we choose for each dataset is to have roughly
$$
\frac{\lambda \gamma n}{\max_i A_i^\top A_i} =\sqrt{n},
$$
so that linear speedup is reached for all $\tau \leq \sqrt{n}$, regardless of   data sparsity.

In Figure~\ref{figlog:ctau} we present contour lines of the theoretical and practical speedup factors, for Quartz specialized to the $(c,\tau)$-nice sampling on the  webspam dataset. We believe it is remarkable that the theoretical predictions are so accurate. Moreover, recall from the discussion in Section~\ref{sec:distr-speedup} that $T(c,\tau)$ is almost constant along the
contour lines of $c\tau$; this is why we see nearly straight lines in the log-log plot. This feature  is  observed here for the real dataset also.

%

\begin{figure}[ht]
\centering
\subfigure[theoretical speedup factor. data: webspam; $n$=350,000; sparsity: 33.51\% ]{
    \includegraphics[width=0.4\textwidth]{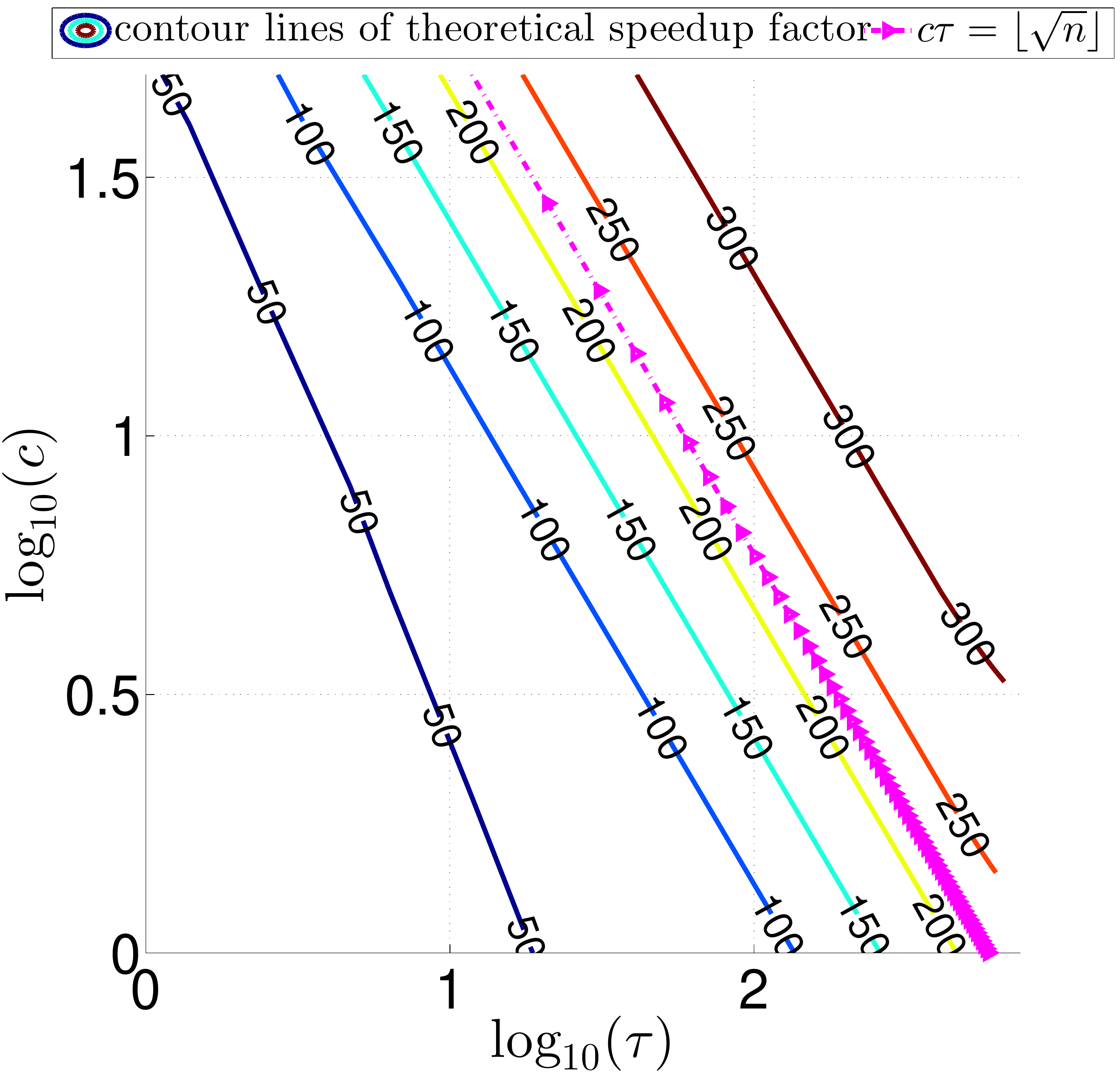}
    \label{figclog:subfig1}
}
\subfigure[experimental speedup factor. data: webspam; $n$=350,000; sparsity: 33.51\%]{
     \includegraphics[width=0.4\textwidth]{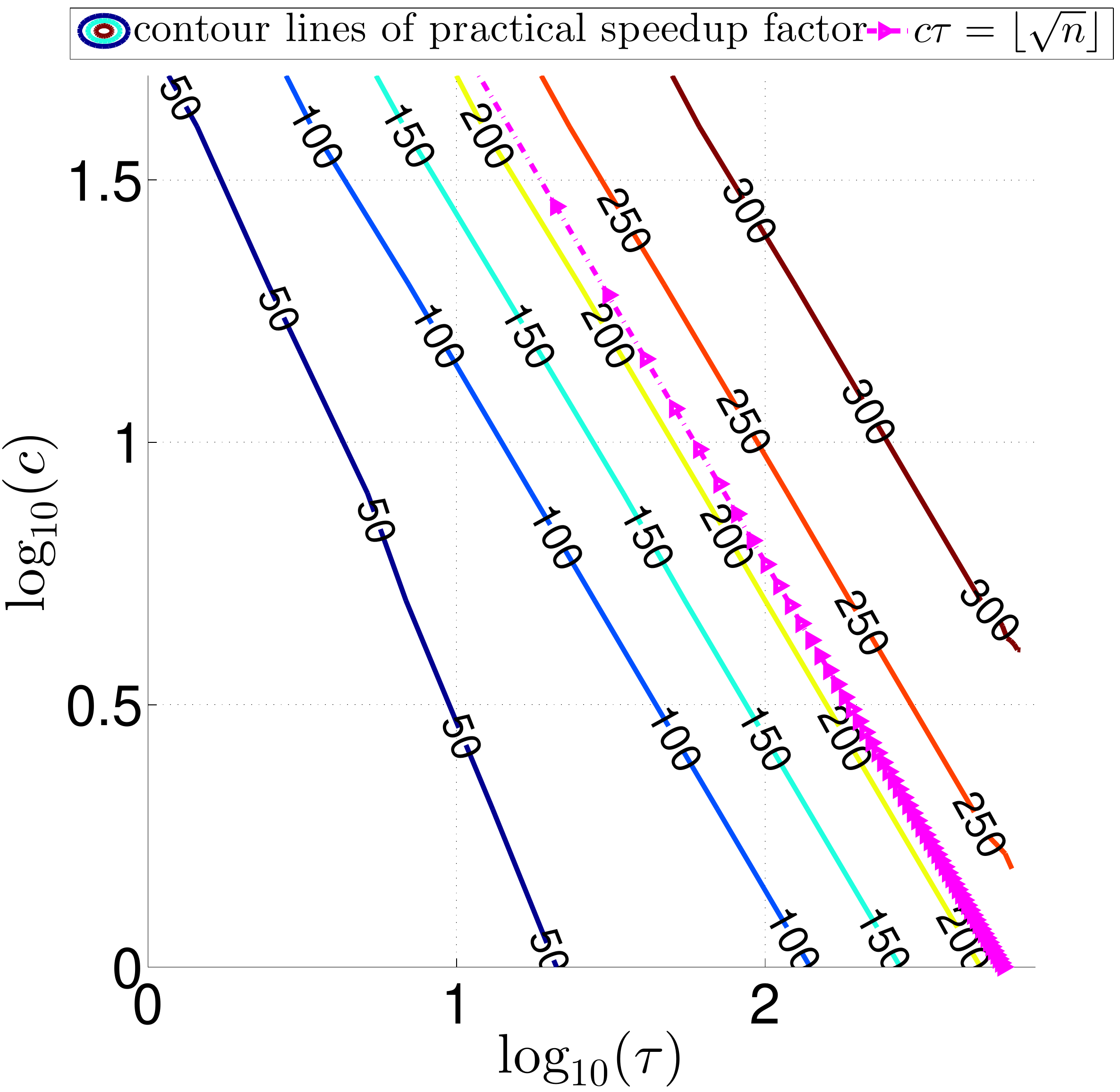}
    \label{figclog:subfig3}
}

\caption[Optional caption for list of figures]{Contour plots of theoretical~(Figure~\ref{figclog:subfig1}) and practical (Figure~\ref{figclog:subfig3}) speedup 
factor. The data set used is webspam. The loss function
used in the smoothed hinge loss with $\sigma=1$. }
\label{figlog:ctau}
\end{figure}

 \clearpage
\section{Conclusion}\label{sec:conclusion}

In this paper we have developed and analyzed a novel stochastic primal-dual algorithm---Quartz---for solving problems \eqref{eq:primal} and \eqref{eq:dual}. This is the second stochastic method which allows an arbitrary sampling (see \cite{NSync}) and the first primal-dual stochastic method with arbitrary sampling. 
This flexibility allows for many interesting variants of Quartz, including serial, parallel and distributed versions. The distributed variant of Quartz is the first distributed SDCA-like method with strong theoretical convergence bounds.

In Table~\ref{tbl:finalconclusion} we highlight selected characteristics of existing primal-dual stochastic methods. 

\begin{table}[ht]
\footnotesize
\centering
 \begin{tabular}{ >{\centering\arraybackslash}m{0.18\textwidth}| | >{\centering\arraybackslash}m{0.5in} |>{\centering\arraybackslash}m{0.7in} | >{\centering\arraybackslash}m{0.5in} | >{\centering\arraybackslash}m{0.5in} || >{\centering\arraybackslash}m{0.6in} | >{\centering\arraybackslash}m{0.6in} | >{\centering\arraybackslash}m{0.5in}  }
Algorithm & Serial uniform sampling & Serial optimal (importance) sampling &  $\tau$-nice sampling &  Arbitrary sampling  & Additional data-dependent speedup & Direct primal-dual analysis & Acceleration\\
\hline
\hline
SDCA \cite{SDCA}   &\cmark  &\xmark &\xmark & \xmark &\xmark & \xmark & \xmark\\
\hline
ASDCA \cite{ASDCA}  &\cmark &\xmark & \cmark & \xmark &\xmark & \xmark &\cmark \\
\hline
AccProx-SDCA \cite{ASDCA}  &\cmark & \xmark & \xmark & \xmark & \xmark &\xmark  &\cmark\\
\hline
DisDCA \cite{NIPSdistributedSDCA}  &\cmark & \xmark  &\cmark & \xmark &\xmark & \xmark & \xmark \\
\hline
Iprox-SDCA \cite{IProx-SDCA}  & \cmark &  \cmark & \xmark & \xmark &\xmark & \xmark & \xmark\\
\hline
APCG \cite{APCG} &\cmark & \xmark & \xmark & \xmark & \xmark &\xmark &\cmark \\
\hline 
SPDC \cite{SPDC}  &\cmark & \cmark &\cmark & \xmark & \xmark &  \cmark &\cmark \\
\hline
\hline
\bf{Quartz}  &\cmark  &\cmark &\cmark & \cmark & \cmark & \cmark & \xmark\\

 \end{tabular}
 \caption{Summary of selected characteristics of stochastic primal-dual algorithms.}
 \label{tbl:finalconclusion}
\end{table}

Unlike some of the existing methods, our method is not accelerated. We leave the development of an accelerated Quartz method for future research.

\bibliographystyle{plain}
\bibliography{literature}

\clearpage

\end{document}